\theoremstyle{plain}
\newtheorem{thm}[subsection]{Theorem}
\newtheorem{prop}[subsection]{Proposition}
\theoremstyle{definition}
\newtheorem{defn}[subsection]{Definition}
\theoremstyle{remark}
\newtheorem{rem}[subsection]{Remark}
\let\c@equation\c@subsection
\newcommand{\ZZ}{{ \mathbb{Z} }}
\newcommand{\NN}{{ \mathbb{N} }}
\newcommand{\capC}{{ \mathcal{C} }}
\newcommand{\capP}{{ \mathcal{P} }}
\newcommand{\capX}{{ \mathcal{X} }}
\newcommand{\capY}{{ \mathcal{Y} }}
\newcommand{\id}{{ \mathrm{id} }}
\newcommand{\Space}{{ \mathsf{S} }}
\newcommand{\Ho}{{ \mathsf{Ho} }}
\newcommand{\sSet}{{ \mathsf{sSet} }}
\newcommand{\M}{{ \mathsf{M} }}
\newcommand{\Set}{{ \mathsf{Set} }}
\newcommand{\coAlg}{{ \mathsf{coAlg} }}
\newcommand{\res}{{ \mathsf{res} }}
\newcommand{\BK}{{ \mathsf{BK} }}
\newcommand{\CGHaus}{{ \mathsf{CGHaus} }}
\newcommand{\coAlgK}{{ \coAlg_\K }}
\newcommand{\coAlgKt}{{ \coAlg_{\tilde{\K}} }}
\newcommand{\Loop}{{ \Omega }}
\newcommand{\Loopt}{{ \tilde{\Omega} }}
\newcommand{\Susp}{{ \Sigma }}
\newcommand{\K}{{ \mathsf{K} }}
\newcommand{\Kt}{{ \tilde{\mathsf{K}} }}
\newcommand{\Smash}{{ \,\wedge\, }}
\newcommand{\tensor}{{ \otimes }}
\newcommand{\tensordot}{{ \dot{\tensor} }}
\newcommand{\wequiv}{{ \ \simeq \ }}
\newcommand{\Iso}{{  \ \cong \ }}
\newcommand{\rarrow}{{ \rightarrow }}
\newcommand{\function}[3]{{ {#1}\colon\thinspace{#2}\rarrow{#3} }}
\newcommand{\functionlong}[3]{{ {#1}\colon\thinspace{#2}\longrightarrow{#3} }}
\DeclareMathOperator*{\holim}{holim}
\DeclareMathOperator{\Hombold}{\mathbf{Hom}}
\DeclareMathOperator{\hombold}{\mathbf{hom}}
\DeclareMathOperator{\Map}{Map}
\DeclareMathOperator{\Cobar}{Cobar}
\DeclareMathOperator{\Tot}{Tot}
\DeclareMathOperator{\hofib}{hofib}
\DeclareMathOperator{\iter}{iterated}
\title[Iterated suspension spaces]{Iterated suspension spaces and higher {F}reudenthal suspension}
\author{Jacobson R. Blomquist}
\author{John E. Harper}
\address{Department of Mathematics, The Ohio State University, 231 West 18th Ave, Columbus, OH 43210, USA}
\email{blomquist.9@osu.edu}
\address{Department of Mathematics, The Ohio State University, Newark, 1179 University Dr, Newark, OH 43055, USA}
\email{harper.903@math.osu.edu}
\begin{document}

\begin{abstract}
We establish a higher Freudenthal suspension theorem and prove that the derived fundamental adjunction comparing spaces with coalgebra spaces over the homotopical iterated suspension-loop comonad, via iterated suspension, can be turned into an equivalence of homotopy theories by replacing spaces with the full subcategory of $1$-connected spaces. This resolves in the affirmative a conjecture of Lawson on iterated suspension spaces; that homotopical descent for iterated suspension is satisfied on objects and morphisms---the corresponding iterated desuspension space can be built as the homotopy limit of a cosimplicial cobar construction encoding the homotopical coalgebraic structure. It also provides a homotopical recognition principle for iterated suspension spaces. In a nutshell, we show that the iterated loop-suspension completion map studied by Bousfield participates in a derived equivalence between spaces and coalgebra spaces over the associated homotopical comonad, after restricting to $1$-connected spaces. 
\end{abstract}

\maketitle

\section{Introduction}

In this paper we work simplicially so that ``space'' means ``simplicial set'' unless otherwise noted; see, for instance, Bousfield-Kan \cite{Bousfield_Kan}, Dundas-Goodwillie-McCarthy \cite{Dundas_Goodwillie_McCarthy}, Goerss-Jardine \cite{Goerss_Jardine}, and Hovey \cite{Hovey}; an introductory treatment is provided in Dwyer-Henn \cite{Dwyer_Henn}.

\subsection{The spaces-level Freudenthal suspension map}

If $X$ is a pointed space and $r\geq 1$, the Freudenthal suspension map has the form
\begin{align}
\label{eq:freudenthal_suspension_map_introduction}
  \pi_*(X)\rarrow \pi_{*+r}(\Sigma^r X)
\end{align}
This map comes from a spaces-level Freudenthal suspension map of the form
\begin{align}
\label{eq:freudenthal_suspension_map_spaces_level_introduction}
  X\rightarrow\tilde{\Loop}^r\Susp^r(X)
\end{align}
and can be thought of as an analog of the spaces-level Hurewicz map $X\rarrow\tilde{\ZZ} X$ that underlies the work in Bousfield-Kan \cite{Bousfield_Kan}, Dundas \cite{Dundas_relative_K_theory}, and Dundas-Goodwillie-McCarthy \cite{Dundas_Goodwillie_McCarthy}, and subsequently our work in \cite{Blomquist_Harper_integral_chains} resolving in the affirmative the integral chains problem. Applying $\pi_*$ to the map \eqref{eq:freudenthal_suspension_map_spaces_level_introduction} recovers the map \eqref{eq:freudenthal_suspension_map_introduction}; here, $\tilde{\Loop}^r$ denotes the right-derived functor of the iterated loop space $\Loop^r=\hombold_*(S^r,-)$ functor.

\subsection{Iterating the Freudenthal suspension map}

Once one has such a Freudenthal suspension map on the level of spaces, it is natural to form a cosimplicial resolution of $X$ with respect to $\Loopt^r\Sigma^r$ of the form
\begin{align}
\label{eq:iterated_loop_suspension_resolution_introduction}
\xymatrix{
  X\ar[r] &
  \tilde{\Loop}^r\Sigma^r(X)\ar@<-0.5ex>[r]\ar@<0.5ex>[r] &
  (\tilde{\Loop}^r\Sigma^r)^2(X)
  \ar@<-1.0ex>[r]\ar[r]\ar@<1.0ex>[r] &
  (\tilde{\Loop}^r\Sigma^r)^3(X)\cdots
  }
\end{align}
showing only the coface maps. The homotopical comonad $\Kt=\Susp^r\Loopt^r$, which is the derived functor of the comonad $\K=\Susp^r\Loop^r$ associated to the $(\Susp^r, \Loop^r)$ adjunction, can be thought of as encoding the spaces-level co-operations on the iterated suspension spaces. Bousfield \cite{Bousfield_cosimplicial_space} studied the cosimplicial resolution of $X$ with respect to $\Loopt^r\Sigma^r$, analogous to the homology resolutions studied in Bousfield-Kan \cite{Bousfield_Kan}, and taking the homotopy limit of the resolution \eqref{eq:iterated_loop_suspension_resolution_introduction} produces the $\Loopt^r\Susp^r$-completion map
\begin{align}
\label{eq:comparison_map_iterated_loop_suspension_completion}
  X\rarrow X^\wedge_{\Loopt^r\Susp^r}
\end{align}

\subsection{The main result}

In this paper we shall prove the following theorem, resolving in the affirmative a conjecture of Tyler Lawson \cite{Lawson_conjecture} on iterated suspension spaces; that homotopical descent for iterated suspension is satisfied on objects and morphisms---the corresponding iterated desuspension space can be built as the homotopy limit of a cosimplicial cobar construction encoding the homotopical coalgebraic structure. It also provides a homotopical recognition principle for iterated suspension spaces that can be thought of as a (dual) analog of May's \cite{May} homotopical approach to iterated loop spaces; see also Carlsson-Milgram \cite{Carlsson_Milgram} for a useful introduction to these ideas. In a nutshell, we show that the iterated loop-suspension completion map studied in Bousfield \cite{Bousfield_cosimplicial_space} participates in a derived equivalence between spaces and coalgebra spaces over the associated homotopical comonad, after restricting to $1$-connected spaces.

\begin{thm}
\label{MainTheorem}
The fundamental adjunction \eqref{eq:fundamental_adjunction_comparing_spaces_with_coAlgK}, comparing pointed spaces to coalgebra spaces over the comonad $\K=\Susp^r\Loop^r$ via iterated suspension $\Susp^r$
\begin{align}
\label{eq:fundamental_adjunction_comparing_spaces_with_coAlgK}
\xymatrix{
  \Space_*\ar@<0.5ex>[r]^-{\Susp^r} & \coAlgK\ar@<0.5ex>[l]^-{\lim_\Delta C}
}
\end{align}
 induces a derived adjunction of the form
\begin{align}
\label{eq:derived_fundamental_adjunction}
  \Map_{\coAlgKt}(\Susp^r X,Y)\wequiv
  \Map_{\Space_*}(X,\holim\nolimits_\Delta \mathfrak{C}(Y))
\end{align}
that is an equivalence of homotopy theories, after restriction to the full subcategories of $1$-connected spaces and $(1+r)$-connected $\Kt$-coalgebra spaces. More precisely:
\begin{itemize}
\item[(a)] If $Y$ is a $(1+r)$-connected $\Kt$-coalgebra space, then the derived counit map
\begin{align*}
  \Susp^r\holim\nolimits_\Delta \mathfrak{C}(Y)\xrightarrow{\wequiv} Y
\end{align*}
associated to the derived adjunction \eqref{eq:derived_fundamental_adjunction} is a weak equivalence; i.e., the iterated suspension functor $\Susp^r$ in \eqref{eq:derived_fundamental_adjunction} is homotopically essentially surjective on $(1+r)$-connected coalgebra spaces over $\Kt$.
\item[(b)] If $X'$ is a $1$-connected space, then the derived unit map
\begin{align*}
  X'\xrightarrow{\wequiv}\holim\nolimits_\Delta 
  \mathfrak{C}(\Susp^r X')
\end{align*} 
associated to the derived adjunction \eqref{eq:derived_fundamental_adjunction} is tautologically the $\Loopt^r\Susp^r$-completion map $X'\rarrow {X'}^\wedge_{\Loopt^r\Susp^r}$, and hence is a weak equivalence by \cite{Bousfield_cosimplicial_space}; in particular, the iterated suspension functor induces a weak equivalence
\begin{align}
  \label{eq:homotopically_fully_faithful_iterated_suspension}
  \Susp^r\colon\thinspace\Map^h_{\Space_*}(X,X')\wequiv\Map_\coAlgKt(\Susp^r X,\Susp^r X')
\end{align}
on mapping spaces and hence is homotopically fully faithful on $1$-connected spaces.
\end{itemize}
We denote by $\Map^h_{\Space_*}(X,X')$ the realization of the Dwyer-Kan \cite{Dwyer_Kan_function_complexes} homotopy function complex.
\end{thm}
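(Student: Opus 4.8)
The plan is to treat the two parts by quite different mechanisms: part (b) is essentially formal once the derived unit is correctly identified, whereas part (a) carries the homotopy-theoretic content and is where the higher Freudenthal suspension theorem does its work; together they give the asserted equivalence of homotopy theories (essential surjectivity from (a), homotopically fully faithful from (b)). Throughout, I would model $\mathfrak{C}(Y)$ as the cosimplicial cobar construction $\Cobar(\Loopt^r,\Kt,Y)$, whose cosimplicial degree $n$ is $\Loopt^r\Kt^n Y$, with cofaces built from the comultiplication of the comonad $\Kt=\Susp^r\Loopt^r$ together with the $\Kt$-coaction on $Y$, and codegeneracies from the counit $\Kt\rarrow\id$; then $\holim_\Delta\mathfrak{C}(Y)$ is precisely the derived totalization computing the desuspension.

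For part (b), I would first record the tautological identification: when $Y=\Susp^r X'$ is a cofree $\Kt$-coalgebra, the cobar construction $\mathfrak{C}(\Susp^r X')$ collapses to the cosimplicial resolution \eqref{eq:iterated_loop_suspension_resolution_introduction} of $X'$ with respect to $\Loopt^r\Susp^r$, so that the derived unit $X'\rarrow\holim_\Delta\mathfrak{C}(\Susp^r X')$ is, by construction, the $\Loopt^r\Susp^r$-completion map $X'\rarrow {X'}^\wedge_{\Loopt^r\Susp^r}$; Bousfield \cite{Bousfield_cosimplicial_space} then supplies the weak equivalence on $1$-connected $X'$. The mapping-space statement \eqref{eq:homotopically_fully_faithful_iterated_suspension} is then formal: for any $1$-connected $X'$ the derived adjunction \eqref{eq:derived_fundamental_adjunction} gives $\Map_\coAlgKt(\Susp^r X,\Susp^r X')\wequiv\Map^h_{\Space_*}(X,\holim_\Delta\mathfrak{C}(\Susp^r X'))$, and precomposing with the equivalence induced by the derived unit $X'\xrightarrow{\wequiv}\holim_\Delta\mathfrak{C}(\Susp^r X')$ identifies this with $\Map^h_{\Space_*}(X,X')$; no further input is needed.

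For part (a) the strategy is a homotopical comonadic descent argument. The key formal observation is that applying $\Susp^r$ termwise to $\mathfrak{C}(Y)$ recovers the comonadic cobar resolution, i.e. $\Susp^r\mathfrak{C}(Y)^\bullet=\Susp^r\Loopt^r\Kt^\bullet Y\iso\Kt^{\bullet+1}Y$, and that the $\Kt$-coaction on $Y$ furnishes the coaugmented cosimplicial object $Y\rarrow\Kt^{\bullet+1}Y$ with an extra codegeneracy coming from the counit of $\Kt$; hence the coaugmentation is a homotopy limit, $Y\xrightarrow{\wequiv}\holim_\Delta\Susp^r\mathfrak{C}(Y)$. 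Consequently the derived counit $\Susp^r\holim_\Delta\mathfrak{C}(Y)\rarrow Y$ factors through the canonical comparison map $\Susp^r\holim_\Delta\mathfrak{C}(Y)\rarrow\holim_\Delta\Susp^r\mathfrak{C}(Y)$, and part (a) reduces entirely to showing that \emph{this} comparison map is a weak equivalence for $(1+r)$-connected $Y$.

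The hard part is exactly this last step, since $\Susp^r$ is a left adjoint and does not commute with $\holim_\Delta$ in general; here I would bring in the higher Freudenthal suspension theorem established earlier in the paper. Writing $\holim_\Delta$ as the inverse limit of the $\Tot_n$ tower, each finite stage is a finite homotopy limit, and the comparison $\Susp^r\Tot_n\mathfrak{C}(Y)\rarrow\Tot_n\Susp^r\mathfrak{C}(Y)$ is controlled by the suspension map on the total homotopy fibers of the associated punctured cubical diagrams. The higher Freudenthal suspension theorem then provides connectivity estimates showing these comparison maps become increasingly highly connected as $n$ grows, driven by the $(1+r)$-connectivity of $Y$ propagating to increasing connectivity of the layers of the $\Tot$ tower; a Bousfield--Kan style convergence analysis---the same connectivity bookkeeping underlying the convergence in part (b)---then forces the error terms to be infinitely connected in the limit, so that $\Susp^r$ commutes with $\holim_\Delta$ here and the comparison map is a weak equivalence. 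I expect the delicate point to be the uniform connectivity estimates across the $\Tot$ tower: verifying that the higher Freudenthal bounds, combined with the increasing connectivity of the cosimplicial layers, yield genuine divergence of connectivity in $n$ rather than a merely bounded improvement.
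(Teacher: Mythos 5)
Your proposal follows essentially the same route as the paper: part (b) is handled tautologically via Bousfield's theorem plus the formal adjunction argument, and part (a) is reduced to the comparison map $\Susp^r\holim_\Delta\mathfrak{C}(Y)\rarrow\holim_\Delta\Susp^r\mathfrak{C}(Y)$ using the extra-codegeneracy argument for the coaugmentation $Y\rarrow\holim_\Delta\Susp^r\mathfrak{C}(Y)$ and higher Freudenthal suspension to drive increasing connectivity estimates up the $\Tot$ tower. The ``delicate point'' you flag is exactly where the paper's work lies---it is carried out via codegeneracy cubes, the uniformity-of-faces proposition, and Goodwillie's higher (dual) Blakers--Massey theorems---but your architecture matches the paper's.
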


\begin{rem}
In other words, homotopical descent for iterated suspension is satisfied on the indicated objects and morphisms.
\end{rem}

\subsection{Classification and characterization theorems}
The following are immediate consequences of our main result; compare with the analogous integral chains problem \cite{Blomquist_Harper_integral_chains} and Ching-Harper \cite{Ching_Harper_derived_Koszul_duality} on topological Quillen homology for structured ring spectra.

\begin{thm}[Classification theorem]
\label{thm:classification}
A pair of $1$-connected pointed spaces $X$ and $X'$ are weakly equivalent if and only if the iterated suspension spaces $\Susp^r X$ and $\Susp^r X'$ are weakly equivalent as derived $\Kt$-coalgebra spaces.
\end{thm}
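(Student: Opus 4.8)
The plan is to deduce the Classification Theorem directly from the main result, Theorem~\ref{MainTheorem}, by exploiting that the iterated suspension functor $\Susp^r$ is both homotopically essentially surjective and homotopically fully faithful on the indicated subcategories. First I would dispense with the easy direction: if $X$ and $X'$ are weakly equivalent as $1$-connected pointed spaces, then since $\Susp^r$ is a functor that preserves weak equivalences (being a left Quillen functor, or directly because suspension preserves weak equivalences of cofibrant objects), the iterated suspension spaces $\Susp^r X$ and $\Susp^r X'$ are weakly equivalent, and moreover the equivalence is compatible with the $\Kt$-coalgebra structure because $\Susp^r$ factors canonically through $\coAlgKt$; hence $\Susp^r X \wequiv \Susp^r X'$ as derived $\Kt$-coalgebra spaces.

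For the converse, suppose $\Susp^r X$ and $\Susp^r X'$ are weakly equivalent as derived $\Kt$-coalgebra spaces, i.e.\ there is a weak equivalence $\function{f}{\Susp^r X}{\Susp^r X'}$ in $\coAlgKt$. The strategy is to transport this equivalence back across the homotopically fully faithful comparison \eqref{eq:homotopically_fully_faithful_iterated_suspension} of Theorem~\ref{MainTheorem}(b). Concretely, since $X$ and $X'$ are $1$-connected, the map
\begin{align*}
  \Susp^r\colon\thinspace\Map^h_{\Space_*}(X,X')\wequiv\Map_\coAlgKt(\Susp^r X,\Susp^r X')
\end{align*}
is a weak equivalence of mapping spaces; the component of the target containing the given equivalence $f$ therefore corresponds to a well-defined homotopy class of maps $\function{g}{X}{X'}$ in the homotopy category of $1$-connected spaces, with $\Susp^r g$ homotopic to $f$ as maps of derived $\Kt$-coalgebras.

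It then remains to verify that $g$ is itself a weak equivalence. The cleanest route is a two-out-of-three style argument applied to the derived unit maps: by Theorem~\ref{MainTheorem}(b) the unit maps $X\wequiv\holim_\Delta\mathfrak{C}(\Susp^r X)$ and $X'\wequiv\holim_\Delta\mathfrak{C}(\Susp^r X')$ are both weak equivalences, and naturality of the unit gives a commuting square relating $g$ on the left to $\holim_\Delta\mathfrak{C}(\Susp^r g)\simeq\holim_\Delta\mathfrak{C}(f)$ on the right; since $f$ is a weak equivalence and $\holim_\Delta\mathfrak{C}(-)$ preserves weak equivalences between fibrant objects, the right-hand vertical map is a weak equivalence, and hence so is $g$. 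The main obstacle I anticipate is purely bookkeeping rather than conceptual: one must be careful that the asserted equivalence of derived $\Kt$-coalgebra spaces is genuinely represented by a zig-zag (or, after cofibrant-fibrant replacement, a single map) in $\coAlgKt$ lying in the appropriate $(1+r)$-connected subcategory, so that Theorem~\ref{MainTheorem}(b) applies on the nose; once the connectivity and fibrancy hypotheses are tracked, the equivalence is a formal consequence of the derived adjunction being an equivalence of homotopy theories.
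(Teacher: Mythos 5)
Your proposal is correct and is essentially the argument the paper intends: the theorem is stated there as an immediate consequence of Theorem~\ref{MainTheorem}, with the forward direction following from functoriality of $\Susp^r$ on weak equivalences and the converse from homotopical full faithfulness \eqref{eq:homotopically_fully_faithful_iterated_suspension} (equivalently, Theorem~\ref{thm:classification_maps}) together with the fact that a homotopically fully faithful functor reflects isomorphisms in the homotopy category. Your two-out-of-three argument via the naturality square for the derived unit is a valid way to carry out that last reflection step (one could alternatively note that $\Susp^r g$ being a weak equivalence forces $g$ to be one by the homology Whitehead theorem on $1$-connected spaces), and the fibrancy/connectivity bookkeeping you flag is indeed the only remaining detail.
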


\begin{thm}[Classification of maps theorem]
\label{thm:classification_maps}
Let $X,X'$ be pointed spaces. Assume that $X'$ is $1$-connected and fibrant.
\begin{itemize}
\item[(a)] \emph{(Existence)} Given any map $\phi$ in $[\Susp^r X,\Susp^r X']_\Kt$, there exists a map $f$ in $[X,X']$ such that $\phi=\Susp^r(f)$.
\item[(b)] \emph{(Uniqueness)} For each pair of maps $f,g$ in $[X,X']$, $f=g$ if and only if $\Susp^r(f)=\Susp^r(g)$ in the homotopy category of $\Kt$-coalgebra spaces.
\end{itemize}
\end{thm}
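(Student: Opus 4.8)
The plan is to deduce both parts formally from part (b) of Theorem \ref{MainTheorem} by passing to path components, so that essentially all of the homotopy-theoretic work has already been carried out. Indeed, the content of \eqref{eq:homotopically_fully_faithful_iterated_suspension} is that, for $1$-connected $X'$, iterated suspension induces a weak equivalence of mapping spaces
\begin{align*}
  \Susp^r\colon\thinspace\Map^h_{\Space_*}(X,X')\wequiv\Map_\coAlgKt(\Susp^r X,\Susp^r X').
\end{align*}
Since any weak equivalence of spaces induces a bijection on $\pi_0$, applying $\pi_0$ produces a bijection
\begin{align*}
  \Susp^r\colon\thinspace\pi_0\Map^h_{\Space_*}(X,X')\xrightarrow{\iso}\pi_0\Map_\coAlgKt(\Susp^r X,\Susp^r X').
\end{align*}
It then remains only to identify the two sides of this bijection with $[X,X']$ and $[\Susp^r X,\Susp^r X']_\Kt$, respectively, and to read off existence and uniqueness.

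First I would recall the standard identification of path components of a homotopy function complex with morphisms in the homotopy category, namely $\pi_0\Map^h_{\Space_*}(X,X')\iso\Ho(\Space_*)(X,X')=[X,X']$, which holds for every pair of objects by Dwyer-Kan \cite{Dwyer_Kan_function_complexes}. Because every simplicial set is cofibrant and $X'$ is assumed fibrant, this morphism set in the homotopy category is represented by genuine pointed simplicial homotopy classes of maps $X\to X'$; this is precisely the role of the fibrancy hypothesis on $X'$. Similarly, since $\Map_\coAlgKt$ is (a model for) the homotopy function complex on $\Kt$-coalgebra spaces, one has $\pi_0\Map_\coAlgKt(\Susp^r X,\Susp^r X')\iso[\Susp^r X,\Susp^r X']_\Kt$, and by construction of the comparison map the induced map on $\pi_0$ is exactly the map $[X,X']\to[\Susp^r X,\Susp^r X']_\Kt$ given by $f\mapsto\Susp^r(f)$. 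Combining these identifications, $\Susp^r$ induces a bijection $[X,X']\iso[\Susp^r X,\Susp^r X']_\Kt$.

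With the bijection in hand both parts are immediate. For (a), surjectivity says that every $\phi$ in $[\Susp^r X,\Susp^r X']_\Kt$ is of the form $\Susp^r(f)$ for some $f$ in $[X,X']$, which is the asserted existence statement. For (b), the implication $f=g\Rightarrow\Susp^r(f)=\Susp^r(g)$ is immediate from functoriality of $\Susp^r$, while the converse implication $\Susp^r(f)=\Susp^r(g)\Rightarrow f=g$ is exactly injectivity of the bijection.

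I expect no genuine obstacle here: the homotopy-theoretic difficulty is entirely absorbed into Theorem \ref{MainTheorem}, and what remains is the bookkeeping of the two $\pi_0$-identifications. The only point requiring a little care is confirming that the fibrancy of $X'$ allows us to promote the abstract morphism set $\Ho(\Space_*)(X,X')$ to honest simplicial homotopy classes of maps, so that the statements of (a) and (b) concern actual maps of spaces rather than zig-zags in the homotopy category.
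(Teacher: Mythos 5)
Your proposal is correct and follows essentially the same route as the paper, which presents this theorem as an immediate consequence of Theorem \ref{MainTheorem}(b): one applies $\pi_0$ to the weak equivalence \eqref{eq:homotopically_fully_faithful_iterated_suspension} and uses that $[\Susp^r X,\Susp^r X']_\Kt$ is defined as $\pi_0\Map_\coAlgKt(\Susp^r X,\Susp^r X')$ together with the fact (recorded in Proposition \ref{prop:formal_adjunction_and_iso_argument}) that the induced map on $\pi_0$ is $[f]\mapsto[\Susp^r f]$. Your attention to the role of the fibrancy hypothesis on $X'$ matches its use in Proposition \ref{prop:formal_adjunction_and_iso_argument}.
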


\begin{thm}[Characterization theorem]
\label{thm:characterization}
A $\Kt$-coalgebra space $Y$ is weakly equivalent to the iterated suspension $\Susp^r X$ of some $1$-connected space $X$, via derived $\Kt$-coalgebra maps, if and only if $Y$ is $(1+r)$-connected.
\end{thm}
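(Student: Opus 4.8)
The plan is to read off both implications from Theorem~\ref{MainTheorem}. Throughout, recall that a derived $\Kt$-coalgebra weak equivalence is in particular an underlying weak equivalence of pointed spaces, so connectivity is invariant under the equivalences appearing in the statement.

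\emph{Necessity.} Suppose $Y$ is weakly equivalent, via derived $\Kt$-coalgebra maps, to $\Susp^r X$ for some $1$-connected pointed space $X$. Then it suffices to show that $\Susp^r X$ is $(1+r)$-connected. The reduced suspension isomorphism $\tilde H_i(\Susp^r X)\iso\tilde H_{i-r}(X)$ together with $X$ being $1$-connected gives $\tilde H_i(\Susp^r X)=0$ for $i\leq 1+r$; since $r\geq 1$ and $X$ is connected, $\Susp^r X$ is simply connected, so the Hurewicz theorem upgrades this vanishing to $\pi_i(\Susp^r X)=0$ for $i\leq 1+r$. Hence $Y$ is $(1+r)$-connected.

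\emph{Sufficiency.} Conversely, suppose $Y$ is a $(1+r)$-connected $\Kt$-coalgebra space and put $X:=\holim\nolimits_\Delta\mathfrak{C}(Y)$. Part (a) of Theorem~\ref{MainTheorem} supplies a weak equivalence of $\Kt$-coalgebra spaces $\Susp^r X\xrightarrow{\wequiv}Y$, namely the derived counit; this exhibits $Y$ as weakly equivalent to $\Susp^r X$ via derived $\Kt$-coalgebra maps. It remains only to check that the space $X$ is $1$-connected, and granting this we are done.

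\emph{Connectivity of $X$.} The $n$-th cosimplicial term of the cobar construction is of the form $\Loopt^r\Kt^n Y$. Since $\Kt=\Susp^r\Loopt^r$ preserves the class of $(1+r)$-connected spaces (it lowers connectivity by $r$ and then raises it by $r$), each $\Kt^n Y$ is $(1+r)$-connected, and hence each term $\Loopt^r\Kt^n Y$ is $1$-connected; in particular every term has trivial fundamental group. Feeding this into the Bousfield--Kan homotopy spectral sequence of $\mathfrak{C}(Y)$---whose convergence in this range is exactly the input of \cite{Bousfield_cosimplicial_space} underlying part (b)---the vanishing of $\pi_t\mathfrak{C}(Y)$ for $t\leq 1$ forces $X$ to be connected, and builds $\pi_1(X)$ out of the abelian subquotients $E_\infty^{s,s+1}$ of $\pi^s\pi_{s+1}\mathfrak{C}(Y)$ by central extensions (the fibers of the $\Tot$-tower being loop spaces), so that $\pi_1(X)$ is nilpotent. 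Finally, the suspension isomorphism gives $H_1(X)\iso\tilde H_{1+r}(\Susp^r X)\iso\tilde H_{1+r}(Y)=0$, so $\pi_1(X)$ is perfect; a perfect nilpotent group is trivial, whence $\pi_1(X)=0$ and $X$ is $1$-connected.

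The main obstacle is precisely this last step. The homological input $H_1(X)=0$ is cheap, but by itself only makes $\pi_1(X)$ perfect, not trivial (a general $X$ with $\Susp^r X$ highly connected can have perfect fundamental group); the genuine work is controlling the low-dimensional homotopy of the totalization $\holim\nolimits_\Delta\mathfrak{C}(Y)$---that it is connected with nilpotent fundamental group---which is where the connectivity of the cobar terms and Bousfield's convergence analysis of the loop-suspension resolution are doing the real lifting.
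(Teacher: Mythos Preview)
Your proof is correct in outline and lands where the paper does, but the last step is substantially more elaborate than what the paper intends, and your citation for the key convergence input is misplaced.

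The paper gives no explicit proof; it regards the characterization as an immediate consequence of the main theorem. The intended argument is: for $Y$ $(1+r)$-connected, set $X=\holim_\Delta\mathfrak{C}(Y)$ and invoke Theorem~\ref{MainTheorem}(a) for the derived counit weak equivalence; then $X$ is $1$-connected \emph{because Theorem~\ref{thm:connectivities_for_map_into_n_th_stage} already says so explicitly} (``an $(n+3)$-connected map between $1$-connected objects''). That is the whole content of the connectivity step.

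Your route instead argues that $\pi_1(X)$ is nilpotent (via the $\Tot$-tower) and perfect (via $H_1(X)\iso\tilde H_{1+r}(Y)=0$), hence trivial. This is a valid maneuver, but two remarks are in order. First, the convergence you need for the Bousfield--Kan spectral sequence of $\mathfrak{C}(Y)$ is \emph{not} what \cite{Bousfield_cosimplicial_space} proves: Bousfield's result (the input to Theorem~\ref{MainTheorem}(b)) concerns the resolution of a $1$-connected space $X'$, i.e., the special case $Y=\Susp^r X'$, not a general $(1+r)$-connected $\Kt$-coalgebra. The convergence for general $Y$ is exactly Theorem~\ref{thm:estimating_connectivity_of_maps_in_tower_C_of_Y} of the present paper (and the strong convergence statement in \S3.5). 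Second, once you cite Theorem~\ref{thm:estimating_connectivity_of_maps_in_tower_C_of_Y}, you already know that each tower map is $(n+2)$-connected with $\holim_{\Delta^{\leq 0}}\mathfrak{C}(Y)\simeq\Loopt^r Y$ $1$-connected, so $\pi_1$ of every truncation---and hence of the limit---vanishes outright; the nilpotent-plus-perfect argument becomes superfluous. In short, your detour works only by implicitly leaning on the very estimates that make it unnecessary.
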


\subsection{Strategy of attack and related work}
We are essentially exploiting a method of attack worked out in Ching-Harper \cite{Ching_Harper_derived_Koszul_duality} for resolving the $0$-connected case of the Francis-Gaitsgory conjecture \cite{Francis_Gaitsgory}, together with a strengthened version of that strategy for leveraging uniform cartesian-ness estimates that we developed in \cite{Blomquist_Harper_integral_chains} to resolve the integral chains problem, together with Cohn's \cite{Cohn} work showing that this strategy of attack extends to homotopy coalgebras over the associated homotopical comonad (see Blumberg-Riehl \cite{Blumberg_Riehl}). Motivated by work in Bousfield \cite{Bousfield_cosimplicial_space} on $\Loopt^r\Susp^r$-completion, together with the close-in-spirit ideas on iterated suspension spaces in Hopkins \cite{Hopkins_iterated_suspension}, and subsequently in Goerss \cite{Goerss_desuspension} and Klein-Schwanzl-Vogt \cite{Klein_Schwanzl_Vogt}, we show that the $\Loopt^r\Susp^r$-completion map participates in a derived equivalence between spaces and coalgebra spaces over the homotopical iterated suspension-loop comonad, after restricting to $1$-connected spaces. 

A guiding philosophy underlying our attack on Lawson's conjecture involves drawing an analogy between the spaces-level Hurewicz map and the spaces-level Freudenthal suspension map; a critical idea behind our strategy is to establish a higher Freudenthal suspension analog of Dundas' higher Hurewicz theorem \cite{Dundas_relative_K_theory}.

\subsection{Commuting suspensions with holim of a cobar construction}
Working with a tiny modification of the framework in Arone-Ching \cite{Arone_Ching_classification}, our main result amounts to proving that the left derived iterated suspension functor $\Susp^r$ commutes,
\begin{align}
\label{eq:key_technical_result}
  \Susp^r\holim\nolimits_\Delta \mathfrak{C}(Y)\wequiv
  \holim\nolimits_\Delta \Susp^r \mathfrak{C}(Y)
\end{align}
up to weak equivalence, with the right derived limit functor $\holim_\Delta $, when composed with the cosimplicial cobar construction $\mathfrak{C}$ associated to the homotopical comonad $\Kt$ and evaluated on $(1+r)$-connected $\Kt$-coalgebra spaces; this is the original form of Lawson's \cite{Lawson_conjecture} conjecture. In \cite{Blomquist_Harper_suspension_spectra} we address Lawson's conjecture in the infinite or limit case involving stabilization and suspension spectra.

\subsection{Organization of the paper}

In Section \ref{sec:outline_of_the_argument} we outline the argument of our main result. In Section \ref{sec:proofs} we review iterated suspension, the associated fundamental adjunction, and coalgebras over the homotopical comonad $\Kt$; we then prove higher Freudenthal suspension together with our main result. Sections \ref{sec:simplicial_structures}, \ref{sec:homotopy_theory_K_coalgebras}, and \ref{sec:derived_fundamental_adjunction} are background sections on simplicial structures, the homotopy theory of $\Kt$-coalgebras, and the derived fundamental adjunction, respectively, that are essential to understanding this paper. For the experts, it will suffice to read Sections \ref{sec:outline_of_the_argument} and \ref{sec:proofs} for a complete proof of the main result.

\subsection*{Acknowledgments}

The authors would like to thank Michael Ching for helpful suggestions and useful remarks throughout this project. The second author would like to thank Bjorn Dundas, Bill Dwyer, Haynes Miller, and Crichton Ogle for useful suggestions and remarks and Lee Cohn, Mike Hopkins, Tyler Lawson, and Nath Rao for helpful comments. The second author is grateful to Haynes Miller for a stimulating and enjoyable visit to the Massachusetts Institute of Technology in early spring 2015, and to Bjorn Dundas for a stimulating and enjoyable visit to the University of Bergen in late spring 2015, and for their invitations which made this possible. The first author was supported in part by National Science Foundation grant DMS-1510640.

\section{Outline of the argument}
\label{sec:outline_of_the_argument}

The purpose of this section is to outline the proof of our main result. Since the derived unit map is tautologically the $\Loopt^r\Susp^r$-completion map $X'\rarrow {X'}^\wedge_{\Loopt^r\Susp^r}$, which is proved to be a weak equivalence on $1$-connected spaces in Bousfield \cite{Bousfield_cosimplicial_space}, establishing the main result reduces to proving that the derived counit map is a weak equivalence. 

The following connectivity estimates are proved in Section \ref{sec:cubical_diagrams_homotopical_analysis}.

\begin{thm}
\label{thm:estimating_connectivity_of_maps_in_tower_C_of_Y}
If $Y$ is a $(1+r)$-connected $\Kt$-coalgebra space and $n\geq 1$, then the natural map
\begin{align}
\label{eq:tower_map_from_n_th_stage_to_next_lower_stage}
  \holim_{\Delta^{\leq n}}\mathfrak{C}(Y)&\longrightarrow
  \holim_{\Delta^{\leq n-1}}\mathfrak{C}(Y)
\end{align}
is an $(n+2)$-connected map between $1$-connected objects.
\end{thm}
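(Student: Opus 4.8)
The plan is to reduce the claimed $(n+2)$-connectivity of \eqref{eq:tower_map_from_n_th_stage_to_next_lower_stage} to a single connectivity estimate for the normalization of the cosimplicial cobar construction $\mathfrak{C}(Y)$, and then to extract that estimate from higher Freudenthal suspension. First I would replace $\mathfrak{C}(Y)$ by a Reedy fibrant model, so that $\holim_{\Delta^{\leq n}}\mathfrak{C}(Y)$ is computed by the partial totalization and the tower maps are fibrations. For a Reedy fibrant cosimplicial space $Z$ the homotopy fiber of the layer map $\holim_{\Delta^{\leq n}}Z\rarrow\holim_{\Delta^{\leq n-1}}Z$ is $\Loop^n N^n Z$, where $N^n Z=\fiber\bigl(Z^n\rarrow M^{n-1}Z\bigr)$ is the $n$-th normalization and $M^{n-1}Z$ is the matching object assembled from the codegeneracies; this is the standard description of the layers of the $\Tot$-tower. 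Since a map is $(n+2)$-connected precisely when its homotopy fiber is $(n+1)$-connected, and $\Loop^n$ drops connectivity by $n$, the theorem reduces to showing that $N^n\mathfrak{C}(Y)$ is $(2n+1)$-connected.

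Next I would identify $N^n\mathfrak{C}(Y)$ with the total homotopy fiber of an $n$-cube. In cosimplicial degree $n$ the cobar construction is $\mathfrak{C}(Y)^n=\Loopt^r\Kt^n Y$, and its $n$ codegeneracies are induced by inserting the comonad counit $\varepsilon\colon\thinspace\Kt\rarrow\id$ (the derived counit of the $(\Susp^r,\Loopt^r)$ adjunction) into each of the $n$ available slots. Because the matching object is the homotopy limit of the punctured cube of these counit contractions, the normalization $N^n\mathfrak{C}(Y)$ is precisely the total homotopy fiber of the $n$-cube $\capX$ of counit contractions, with distinguished vertices
\begin{align*}
  \capX(\emptyset)=\Loopt^r\Kt^n Y,\qquad \capX(\{1,\dots,n\})=\Loopt^r Y.
\end{align*}
As a consistency check, the case $n=1$ is the single codegeneracy $\Loopt^r\varepsilon_Y\colon\thinspace\Loopt^r\Kt Y\rarrow\Loopt^r Y$, which is a retraction of the spaces-level Freudenthal suspension map $\Loopt^r Y\rarrow\Loopt^r\Susp^r\Loopt^r Y$; since $Y$ is $(1+r)$-connected the object $\Loopt^r Y$ is $1$-connected, so Freudenthal makes this section $3$-connected and hence forces $N^1\mathfrak{C}(Y)$ to be $3$-connected, which is exactly $2n+1$ at $n=1$.

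The heart of the argument is to promote this to the uniform statement that $\capX$ has $(2n+1)$-connected total homotopy fiber for all $n$. This is exactly the role of the higher Freudenthal suspension theorem: the cube $\capX$ is assembled by iterating, in $n$ independent directions, the homotopy fiber of the Freudenthal suspension map $W\rarrow\Loopt^r\Susp^r W$, while the relevant inputs $\Loopt^r\Kt^\bullet Y$ remain $1$-connected (each $\Kt^k Y$ is $(1+r)$-connected, since $\Loopt^r$ lowers and $\Susp^r$ raises connectivity by $r$), and it is this fixed base connectivity, fed through the doubling $c\mapsto 2c+1$ of the suspension estimate, that produces the slope $2n+1$. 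I would organize the proof as an induction on $n$, writing $\capX$ as a map of $(n-1)$-cubes along the last counit direction and bounding the total fiber of the induced map of total fibers, with the $n=1$ computation above as the base case. The main obstacle is precisely obtaining the \emph{sharp} slope: a naive direction-by-direction application of classical Freudenthal yields only qualitative high cartesian-ness, whereas the conclusion requires the uniform cartesian-ness estimates of the higher Freudenthal suspension analog of Dundas' higher Hurewicz theorem, in the spirit of the strengthened estimates developed in \cite{Blomquist_Harper_integral_chains}. Finally, the $1$-connectedness of the objects $\holim_{\Delta^{\leq n}}\mathfrak{C}(Y)$ follows from the $1$-connectedness of each $\mathfrak{C}(Y)^n=\Loopt^r\Kt^n Y$ together with the connectivity of the tower maps just established, which keeps every partial homotopy limit $1$-connected.
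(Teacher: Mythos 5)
Your reduction coincides with the paper's: by Proposition \ref{prop:iterated_homotopy_fibers_calculation} the homotopy fiber of \eqref{eq:tower_map_from_n_th_stage_to_next_lower_stage} is $\Omega^n$ of the total homotopy fiber of the codegeneracy $n$-cube $\capY_n$ of $\mathfrak{C}(Y)$ (your $\Loop^n N^n\mathfrak{C}(Y)$), so everything rests on the single estimate that this total fiber is $(2n+1)$-connected (Proposition \ref{prop:iterated_hofiber_codegeneracy_cube}), and your $n=1$ base case (a retraction of a $3$-connected Freudenthal map is $4$-connected) is correct. The gap is in how you propose to get this estimate for $n\geq 2$. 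The cube $\capY_n$ is built from counit (codegeneracy) maps, whereas the higher Freudenthal suspension theorem (Theorem \ref{thm:higher_freudenthal_suspension}) only controls cubes of the form $\capX\rarrow\Loopt^r\Susp^r\capX$, built from the unit; your description of $\capY_n$ as ``assembled by iterating, in $n$ independent directions, the homotopy fiber of the Freudenthal suspension map'' is not accurate, and Theorem \ref{thm:higher_freudenthal_suspension} cannot be applied to $\capY_n$ as stated. Your induction then stalls exactly at the point you flag as ``the main obstacle'': writing $\capY_n$ as a map of $(n-1)$-cubes, the inductive hypothesis gives that the total fibers of the two faces are $(2n-1)$-connected, but the total fiber of $\capY_n$ is the fiber of the map between these total fibers, and two $(2n-1)$-connected spaces only force a $(2n-2)$-connected fiber; you would need that map to be $(2n+2)$-connected, which is (up to reindexing) the very statement being proved. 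The retraction trick that works for single maps does not iterate to squares and higher cubes.

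What is missing is the bridge the paper uses between unit cubes and counit cubes: the uniformity-of-faces equivalence (Proposition \ref{prop:comparing_faces_of_coface_cube_with_codegeneracy_cube}, forced by the cosimplicial identities)
\begin{align*}
  (\iter\hofib)\,\partial_{\{t\}}^T\widetilde{\mathfrak{C}(Y)}\wequiv
  \Omega^{|T|-1}(\iter\hofib)\,\capY_{|T|-1}.
\end{align*}
The face of the coface cube $\widetilde{\mathfrak{C}(Y)}$ spanned by the unit-insertion cofaces is, up to fibrant-replacement bookkeeping, precisely the iterated Freudenthal cube on the $1$-connected space $\Loopt^r Y$ (note that the remaining coface uses the coalgebra structure map $m\colon\thinspace Y\rarrow\Kt Y$ and is not a Freudenthal map either---a direction your sketch does not address, and which uniformity also handles). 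Higher Freudenthal with $k=1$ makes that face $(\id+2)$-cartesian, so its total fiber in dimension $d$ is $(d+1)$-connected, and the displayed equivalence then converts this into the $(2d+1)$-connectivity of $(\iter\hofib)\capY_d$, the extra $+d$ coming from the $\Omega^{|T|-1}$. It is this transfer across the cosimplicial identities, not the higher Freudenthal theorem by itself, that produces the sharp slope $2n+1$; without it, or an equivalent comparison between coface and codegeneracy cubes, your argument cannot be completed as described.
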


\begin{thm}
\label{thm:connectivities_for_map_into_n_th_stage}
If $Y$ is a $(1+r)$-connected $\Kt$-coalgebra space and $n\geq 0$, then
\begin{align}
\label{eq:canonical_map_needed_to_discuss}
  \holim\nolimits_\Delta \mathfrak{C}(Y)&\longrightarrow
  \holim\nolimits_{\Delta^{\leq n}}\mathfrak{C}(Y)\\
\label{eq:the_other_canonical_map_needed}
  \Susp^r\holim\nolimits_\Delta \mathfrak{C}(Y)&\longrightarrow
  \Susp^r\holim\nolimits_{\Delta^{\leq n}}\mathfrak{C}(Y)
\end{align}
the natural map \eqref{eq:canonical_map_needed_to_discuss} (resp. \eqref{eq:the_other_canonical_map_needed}) is an $(n+3)$-connected map between $1$-connected objects (resp. $(n+3+r)$-connected map between $(1+r)$-connected objects).
\end{thm}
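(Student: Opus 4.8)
The plan is to deduce both connectivity statements from Theorem \ref{thm:estimating_connectivity_of_maps_in_tower_C_of_Y}, which controls the individual maps in the homotopy limit tower $\{\holim_{\Delta^{\leq n}}\mathfrak{C}(Y)\}_n$. The key observation is that $\holim_\Delta\mathfrak{C}(Y)$ is the limit of this tower, and the map \eqref{eq:canonical_map_needed_to_discuss} into the $n$-th stage is the canonical map from the inverse limit to a finite stage. By Theorem \ref{thm:estimating_connectivity_of_maps_in_tower_C_of_Y}, the fiber of the map $\holim_{\Delta^{\leq m}}\mathfrak{C}(Y)\to\holim_{\Delta^{\leq m-1}}\mathfrak{C}(Y)$ is $(m+1)$-connected (since an $(m+2)$-connected map between $1$-connected objects has $(m+1)$-connected homotopy fiber). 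The homotopy fiber of the map \eqref{eq:canonical_map_needed_to_discuss} is therefore assembled, via a $\lim^1$-type tower argument, from the fibers at stages $m\geq n+1$, the least connected of which sits at $m=n+1$ and is $(n+2)$-connected; the standard connectivity estimate for maps from a homotopy limit to a finite stage of its defining tower then yields that \eqref{eq:canonical_map_needed_to_discuss} is $(n+3)$-connected. That both source and target are $1$-connected follows since every stage $\holim_{\Delta^{\leq m}}\mathfrak{C}(Y)$ is $1$-connected by Theorem \ref{thm:estimating_connectivity_of_maps_in_tower_C_of_Y}, and $1$-connectivity is preserved under the homotopy inverse limit of a tower of $1$-connected objects connected by highly connected maps.

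For the second statement \eqref{eq:the_other_canonical_map_needed}, I would exploit that the left-derived iterated suspension functor $\Susp^r$ raises connectivity by exactly $r$: if $f$ is a $k$-connected map between $c$-connected objects, then $\Susp^r f$ is a $(k+r)$-connected map between $(c+r)$-connected objects. Applying this to the conclusion for \eqref{eq:canonical_map_needed_to_discuss}—which is $(n+3)$-connected between $1$-connected objects—immediately gives that \eqref{eq:the_other_canonical_map_needed} is an $(n+3+r)$-connected map between $(1+r)$-connected objects, as claimed. This reduces the second part entirely to the first together with the suspension connectivity shift, so no separate tower analysis is needed for \eqref{eq:the_other_canonical_map_needed}.

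The main obstacle I anticipate is justifying the passage from the stagewise estimates of Theorem \ref{thm:estimating_connectivity_of_maps_in_tower_C_of_Y} to the estimate on the map out of the full homotopy limit \eqref{eq:canonical_map_needed_to_discuss}: one must verify that the homotopy limit over all of $\Delta$ genuinely agrees with the limit of the tower of truncated homotopy limits $\holim_{\Delta^{\leq n}}$, and that the relevant $\lim^1$ terms vanish in the range of interest so that connectivity is controlled precisely by the fiber at the first stage $m=n+1$ rather than being eroded by the infinite assembly. The clean way to handle this is to note that, by Theorem \ref{thm:estimating_connectivity_of_maps_in_tower_C_of_Y}, the connectivities of the tower maps strictly increase, so the tower is pro-constant in each fixed degree and the homotopy limit is computed by a Milnor sequence whose $\lim^1$ term vanishes; the resulting estimate is then the standard one for a tower with increasingly connected layers. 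Once this convergence point is established, both (a) and (b) follow formally.
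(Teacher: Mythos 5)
Your proposal is correct and follows essentially the same route as the paper: both arguments observe that, by Theorem \ref{thm:estimating_connectivity_of_maps_in_tower_C_of_Y}, every map in the tower $\{\holim_{\Delta^{\leq m}}\mathfrak{C}(Y)\}_m$ above level $n$ is at least $(n+3)$-connected, so the map from $\holim_\Delta\mathfrak{C}(Y)$ to the $n$-th stage is $(n+3)$-connected, and the statement for \eqref{eq:the_other_canonical_map_needed} follows by applying $\Susp^r$ to shift connectivity by $r$. Your additional care about the Tot tower, the Milnor $\lim^1$ sequence, and the preservation of $1$-connectedness fills in details the paper leaves implicit, but it is the same argument.
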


\begin{proof}
Consider the first part. By Theorem \ref{thm:estimating_connectivity_of_maps_in_tower_C_of_Y} each of the maps in the holim tower $\{\holim_{\Delta^{\leq n}}\mathfrak{C}(Y)\}_n$, above level $n$, is at least $(n+3)$-connected. It follows that the map \eqref{eq:canonical_map_needed_to_discuss} is $(n+3)$-connected. The second part follows from the first part.
\end{proof}

The following theorem underlies our main technical result. The homotopical analysis is worked out in Section \ref{sec:cubical_diagrams_homotopical_analysis} by leveraging the strong uniform cartesian-ness estimates for iterations of the Freudenthal suspension map, applied to $X=\Loopt^r Y$, in the higher Freudenthal suspension theorem, together with the ``uniformity of faces'' behavior forced by the cosimplicial identities. 

\begin{thm}
\label{thm:connectivities_for_map_that_commutes_Z_into_inside_of_holim}
If $Y$ is a $(1+r)$-connected $\Kt$-coalgebra space and $n\geq 1$, then the natural map
\begin{align}
\label{eq:commuting_Z_past_holim_delta}
  \Susp^r\holim\nolimits_{\Delta^{\leq n}} \mathfrak{C}(Y)\longrightarrow
  \holim\nolimits_{\Delta^{\leq n}} \Susp^r\,\mathfrak{C}(Y),
\end{align}
is $(n+5+r)$-connected; the map is a weak equivalence for $n=0$.
\end{thm}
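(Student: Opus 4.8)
The plan is to prove the statement by reducing the commutation problem to a question about how the left derived iterated suspension $\Susp^r$ interacts with the total homotopy fibers of the cubical diagrams built from the coface maps of $\mathfrak{C}(Y)$, and then to feed in the uniform cartesian-ness estimates coming from higher Freudenthal suspension. The case $n=0$ is immediate: $\holim_{\Delta^{\leq 0}}\mathfrak{C}(Y)$ is the evaluation $\mathfrak{C}(Y)^0=\Loopt^r Y$ at cosimplicial degree zero, and since there is no genuine limit to commute past, the comparison map \eqref{eq:commuting_Z_past_holim_delta} is naturally isomorphic to the identity on $\Susp^r\Loopt^r Y$, hence a weak equivalence.

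For $n\geq 1$, the first step is to rewrite the finite homotopy limit $\holim_{\Delta^{\leq n}}\mathfrak{C}(Y)$ as an iterated total homotopy fiber of the $n$-cubical diagram assembled from the coface maps, so that the comparison map \eqref{eq:commuting_Z_past_holim_delta} becomes the natural map comparing $\Susp^r$ of a total homotopy fiber with the total homotopy fiber taken after applying $\Susp^r$. The point of this reformulation is that $\Susp^r$ is (up to weak equivalence) a left adjoint and therefore commutes with the homotopy colimit side of each cube; the obstruction to commuting with the homotopy fiber side is then precisely a Blakers--Massey / Freudenthal phenomenon, measured by the cartesian-ness of the cube before suspension against the cocartesian-ness of the cube after suspension.

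The heart of the argument is supplying these estimates. Setting $X=\Loopt^r Y$, which is $1$-connected since $Y$ is $(1+r)$-connected, the coface maps of $\mathfrak{C}(Y)$ are built from iterations of the spaces-level Freudenthal suspension map $X\rightarrow\Loopt^r\Susp^r X$. The higher Freudenthal suspension theorem provides strong uniform cartesian-ness estimates for these iterated-suspension cubes, and the cosimplicial identities force a \emph{uniformity of faces}: every face of the relevant $n$-cube is again an iterated Freudenthal suspension cube, so a single uniform estimate applies simultaneously to all faces. Feeding these into the cubical machinery, one commutes $\Susp^r$ across the total homotopy fiber one cube-direction at a time, invoking Freudenthal in the one-variable case at each stage; the uniformity guarantees that the connectivity does not degrade as the cube dimension grows, and the bookkeeping yields exactly the $(n+5+r)$ estimate (with the $+r$ reflecting the $r$-fold suspension and the base connectivity $1+r$ of $Y$).

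I expect the main obstacle to be the connectivity bookkeeping. A naive level-by-level induction on the truncation degree loses one unit of connectivity at each stage, because the $n$-th layer of the tower of partial homotopy limits $\{\holim_{\Delta^{\leq k}}\mathfrak{C}(Y)\}_k$ involves an $n$-fold loop space and $\Susp^r$ fails to commute with loops. The resolution is precisely the \emph{strong} and \emph{uniform} nature of the higher Freudenthal estimates: the cartesian-ness of the $n$-cubes grows with $n$ fast enough to compensate for the connectivity lost to the iterated loops and iterated fibers, so that the net estimate remains linear in $n$. Making this compensation precise---controlling all faces uniformly via the cosimplicial identities, and verifying that the additive constant is exactly $5+r$---is the technical crux.
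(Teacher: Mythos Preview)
Your overall strategy is correct and matches the paper: reduce the comparison map to a statement about the cartesian-ness of a cube built from $\mathfrak{C}(Y)$ after applying $\Susp^r$, and feed in the uniform estimates coming from higher Freudenthal suspension together with the uniformity of faces forced by the cosimplicial identities. The $n=0$ case is exactly as you describe.

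Where your route diverges from the paper is in the execution of the cube estimate. You propose to commute $\Susp^r$ past the total homotopy fiber one cube-direction at a time, invoking ordinary Freudenthal at each stage and relying on the uniformity to keep the bookkeeping from degrading. The paper instead makes a single global ``round trip'' through cocartesian estimates: from the $(\id+2)$-cartesian estimates on the faces of the $\infty$-cartesian $(n+1)$-cube $\widetilde{\mathfrak{C}(Y)}$ (supplied by higher Freudenthal applied to $X=\Loopt^r Y$ and the uniformity of faces), Goodwillie's higher dual Blakers--Massey gives that $\widetilde{\mathfrak{C}(Y)}$ is $(2\cdot\id+1)$-cocartesian and in particular $(2n+5)$-cocartesian; since $\Susp^r$ raises cocartesian-ness by exactly $r$, the suspended cube is $(2\cdot\id+1+r)$-cocartesian and $(2n+5+r)$-cocartesian; and then higher Blakers--Massey converts these cocartesian estimates back to an $(n+5+r)$-cartesian estimate on $\Susp^r\widetilde{\mathfrak{C}(Y)}$, which is precisely the connectivity of the comparison map. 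This packaging avoids the inductive bookkeeping you flag as the crux and produces the constant $5+r$ on the nose with no case analysis. Your direction-by-direction approach can likely be pushed through, but it is genuinely harder to extract the sharp constant that way; the paper's cocartesian round-trip is what makes the numerology transparent.

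One small correction: the relevant cube is $(n+1)$-dimensional, indexed by $\capP([n])$ via the left cofinal inclusion $\capP_0([n])\hookrightarrow\Delta^{\leq n}$, not $n$-dimensional. The initial vertex is the holim over the punctured cube, which is why the cartesian-ness of the suspended $(n+1)$-cube \emph{is} the connectivity of the map \eqref{eq:commuting_Z_past_holim_delta}.
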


The following is a corollary of these connectivity estimates.

\begin{thm}
\label{thm:ZC_commutes_with_desired_holim}
If $Y$ is a $(1+r)$-connected $\Kt$-coalgebra space, then the natural maps
\begin{align}
\label{eq:ZC_commutes_with_desired_holim}
  \Susp^r\holim\nolimits_\Delta \mathfrak{C}(Y)&\xrightarrow{\wequiv}
  \holim\nolimits_\Delta \Susp^r\,\mathfrak{C}(Y)\xrightarrow{\wequiv}Y
\end{align}
are weak equivalences.
\end{thm}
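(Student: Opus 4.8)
The plan is to treat the two maps in \eqref{eq:ZC_commutes_with_desired_holim} separately. The second map is essentially formal: since $\Susp^r\mathfrak{C}(Y)$ is the cosimplicial object $n\mapsto\Susp^r\Loopt^r\Kt^n Y=\Kt^{n+1}Y$, it is precisely the standard comonadic cosimplicial resolution of the $\Kt$-coalgebra $Y$, coaugmented by the coaction $Y\to\Kt Y$. This coaugmented cosimplicial object carries the usual extra codegeneracies coming from the coalgebra structure, so the coaugmentation $Y\to\holim_\Delta\Susp^r\mathfrak{C}(Y)$ is a weak equivalence; this is exactly the content developed in the background sections on the homotopy theory of $\Kt$-coalgebras and the derived fundamental adjunction. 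Hence it remains only to prove that the first map $f\colon\Susp^r\holim_\Delta\mathfrak{C}(Y)\to\holim_\Delta\Susp^r\mathfrak{C}(Y)$ is a weak equivalence.

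For the first map I would compare the two infinite homotopy limits with their finite stages along $\Delta^{\leq n}$. Fitting $f$ into the commuting square whose vertical maps are the canonical maps to the $n$th stage and whose bottom map is the finite-stage comparison \eqref{eq:commuting_Z_past_holim_delta}, we obtain the identity $q_n\circ f=\phi_n\circ p_n$, where $p_n\colon\Susp^r\holim_\Delta\mathfrak{C}(Y)\to\Susp^r\holim_{\Delta^{\leq n}}\mathfrak{C}(Y)$ is $(n+3+r)$-connected by Theorem \ref{thm:connectivities_for_map_into_n_th_stage} (see \eqref{eq:the_other_canonical_map_needed}), $\phi_n\colon\Susp^r\holim_{\Delta^{\leq n}}\mathfrak{C}(Y)\to\holim_{\Delta^{\leq n}}\Susp^r\mathfrak{C}(Y)$ is $(n+5+r)$-connected by Theorem \ref{thm:connectivities_for_map_that_commutes_Z_into_inside_of_holim}, and $q_n\colon\holim_\Delta\Susp^r\mathfrak{C}(Y)\to\holim_{\Delta^{\leq n}}\Susp^r\mathfrak{C}(Y)$ is the canonical map to the $n$th stage for the cosimplicial object $\Susp^r\mathfrak{C}(Y)$.

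The key remaining input is that $q_n$ is highly connected, with connectivity tending to infinity as $n\to\infty$. Since every level $\Kt^{n+1}Y$ of $\Susp^r\mathfrak{C}(Y)$ is $(1+r)$-connected, the same holim-tower analysis that proves Theorems \ref{thm:estimating_connectivity_of_maps_in_tower_C_of_Y} and \ref{thm:connectivities_for_map_into_n_th_stage} shows that the maps in the tower $\{\holim_{\Delta^{\leq n}}\Susp^r\mathfrak{C}(Y)\}_n$ above level $n$ are increasingly connected, and hence that $q_n$ is at least $(n+c)$-connected for a fixed constant $c$. Granting this, the argument concludes by a standard limiting computation on homotopy groups: fixing $k$ and choosing $n$ so large that $p_n$, $\phi_n$, and $q_n$ are all $(k+2)$-connected, the identity $\pi_k(q_n)\pi_k(f)=\pi_k(\phi_n)\pi_k(p_n)$, together with the fact that $\pi_k(p_n),\pi_k(\phi_n),\pi_k(q_n)$ are all isomorphisms, forces $\pi_k(f)$ to be an isomorphism. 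As $k$ is arbitrary and both source and target are $(1+r)$-connected, hence simply connected, $f$ is a weak equivalence.

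The main obstacle inside this corollary is the estimate on $q_n$, i.e.\ controlling how quickly the homotopy limit $\holim_\Delta\Susp^r\mathfrak{C}(Y)$ is approximated by its finite stages; this requires re-running the cubical and holim-tower homotopical analysis of Section \ref{sec:cubical_diagrams_homotopical_analysis} for the levelwise suspended cosimplicial object $\Susp^r\mathfrak{C}(Y)$ rather than for $\mathfrak{C}(Y)$ itself. Of course the genuinely hard content sits upstream in Theorem \ref{thm:connectivities_for_map_that_commutes_Z_into_inside_of_holim}, whose growing $(n+5+r)$-connectivity encodes the higher Freudenthal suspension estimates; given that theorem together with Theorems \ref{thm:estimating_connectivity_of_maps_in_tower_C_of_Y} and \ref{thm:connectivities_for_map_into_n_th_stage}, the present statement is essentially a matter of assembling the connectivity bounds and passing to the limit.
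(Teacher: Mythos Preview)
Your proof is correct and follows the same two-step decomposition as the paper: extra codegeneracies for the right-hand map, and the increasing connectivities of $p_n$ and $\phi_n$ (the maps \eqref{eq:the_other_canonical_map_needed} and \eqref{eq:commuting_Z_past_holim_delta}) for the left-hand map.

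Two small differences are worth flagging. First, you identify the connectivity of $q_n$ as the ``main obstacle'' and propose re-running the cubical analysis of Section~\ref{sec:cubical_diagrams_homotopical_analysis} on the suspended cosimplicial object to obtain it. The paper bypasses this entirely: once $\phi_n\circ p_n$ is $(n+3+r)$-connected, the tower $\{\holim_{\Delta^{\leq n}}\Susp^r\mathfrak{C}(Y)\}_n$ automatically stabilizes on each $\pi_k$ (a fixed source maps to every stage by a $\pi_k$-isomorphism for large $n$), so the map to the inverse limit is a weak equivalence without any separate estimate on $q_n$. Second, for the right-hand map the paper is more careful about where the extra codegeneracies live: it passes to $F\Susp^r\mathfrak{C}(Y)\cong\Cobar(F\K,F\K,FY)$, and the $s^{-1}$ maps come from the counit on the leftmost $F\K$, not from the $\Kt$-coalgebra structure on $Y$ as you write (the counit of $\Kt$ lands in $F$, not $\id$, which is why the extra $F$ is needed). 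Your conclusion is nonetheless correct.
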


\begin{proof}
Consider the left-hand map; it suffices to verify that the connectivities of the natural maps \eqref{eq:the_other_canonical_map_needed} and \eqref{eq:commuting_Z_past_holim_delta}
 are strictly increasing with $n$, and Theorems \ref{thm:connectivities_for_map_into_n_th_stage} and \ref{thm:connectivities_for_map_that_commutes_Z_into_inside_of_holim} complete the proof. Consider the case of the right-hand map. Since $\Susp^r\, \mathfrak{C}(Y)\wequiv F\Susp^r\, \mathfrak{C}(Y)$ and the latter is isomorphic to the cosimplicial cobar construction $\Cobar(F\K,F\K,FY)$, which has extra codegeneracy maps $s^{-1}$ (Dwyer-Miller-Neisendorfer \cite[6.2]{Dwyer_Miller_Neisendorfer}), it follows from the cofinality argument in Dror-Dwyer \cite[3.16]{Dror_Dwyer_long_homology} that the right-hand map in \eqref{eq:ZC_commutes_with_desired_holim} is a weak equivalence.
\end{proof}

\begin{proof}[Proof of Theorem \ref{MainTheorem}]
The natural map
$
  \Susp^r\holim\nolimits_\Delta \mathfrak{C}(Y)\rarrow Y
$
is a weak equivalence since this is the composite \eqref{eq:ZC_commutes_with_desired_holim}; Theorem \ref{thm:ZC_commutes_with_desired_holim} completes the proof.
\end{proof}

\section{Homotopical analysis}
\label{sec:proofs}

The purpose of this section is to recall iterated suspension, the associated fundamental adjunction, and coalgebras over the associated homotopical comonad $\Kt$, and then to prove Theorems \ref{thm:estimating_connectivity_of_maps_in_tower_C_of_Y} and \ref{thm:connectivities_for_map_that_commutes_Z_into_inside_of_holim}. 

\subsection{Iterated suspension and the fundamental adjunction}

The fundamental adjunction naturally arises by observing that $\Susp^r$ is equipped with a coaction over the comonad $\K$ associated to the $(\Susp^r, \Loop^r)$ adjunction; this observation, which remains true for any adjunction provided that the indicated limits below exist, forms the basis of the homotopical descent ideas appearing in Hess \cite{Hess} and subsequently in Francis-Gaitsgory \cite{Francis_Gaitsgory}.

Consider any pointed spaces $X,Y$ and recall that the iterated suspension space $\Sigma^r(X):=S^r\Smash X$ and iterated loop space $\Omega^r(Y):=\hombold_*(S^r,Y)$ functors fit into the left-hand adjunction
\begin{align}
\label{eq:homology_adjunction}
\xymatrix{
  \Space_*\ar@<0.5ex>[r]^-{\Susp^r} &
  \Space_*\ar@<0.5ex>[l]^-{\Loop^r}
}
\quad\quad
\xymatrix{
  \Space_*\ar@<0.5ex>[r]^-{\Susp} &
  \Space_*\ar@<0.5ex>[l]^-{\Loop}\ \cdots\ 
  \Space_*\ar@<0.5ex>[r]^-{\Susp} &
  \Space_*\ar@<0.5ex>[l]^-{\Loop}
}
\quad\quad
\text{($r$ copies)}
\end{align}
with left adjoint on top; here, $S^r:=(S^1)^{\wedge r}$ for $r\geq 1$ where $S^1:=\Delta[1]/\partial\Delta[1]$. Note that the left-hand adjunction is naturally isomorphic to the right-hand $r$-fold iteration of the suspension-loop adjunction, by uniqueness of adjoints up to natural isomorphism. Associated to the adjunction in \eqref{eq:homology_adjunction} is the monad $\Loop^r\Susp^r$ on pointed spaces $\Space_*$ and the comonad $\K:=\Susp^r\Loop^r$ on pointed spaces $\Space_*$ of the form
\begin{align}
\label{eq:TQ_homology_spectrum_functor_natural_transformations}
  \id\xrightarrow{\eta} \Loop^r\Susp^r&\quad\text{(unit)},\quad\quad
  &\id\xleftarrow{\varepsilon}\K& \quad\text{(counit)}, \\
  \notag
  \Loop^r\Susp^r\Loop^r\Susp^r\rarrow \Loop^r\Susp^r&\quad\text{(multiplication)},\quad\quad
  &\K\K\xleftarrow{m}\K& \quad\text{(comultiplication)}.
\end{align}
and it follows formally that there is a factorization of adjunctions of the form
\begin{align}
\label{eq:factorization_of_adjunctions_Z_homology}
\xymatrix{
  \Space_*\ar@<0.5ex>[r]^-{\Susp^r} &
  \coAlgK\ar@<0.5ex>[r]\ar@<0.5ex>[l]^-{\lim_\Delta C} &
  \Space_*\ar@<0.5ex>[l]^-{\K} 
}
\end{align}
with left adjoints on top and $\coAlgK\rarrow\Space_*$ the forgetful functor. In particular, the iterated suspension space $\Susp^r X$ is naturally equipped with a $\K$-coalgebra structure. To understand the comparison in \eqref{eq:factorization_of_adjunctions_Z_homology} between $\Space_*$ and $\coAlgK$ it suffices to note that $\lim_\Delta C(Y)$ is naturally isomorphic to an equalizer of the form
\begin{align*}
  \lim_\Delta C(Y)\Iso
  \lim\Bigl(
  \xymatrix{
    \Loop^r Y\ar@<0.5ex>[r]^-{d^0}\ar@<-0.5ex>[r]_-{d^1} &
    \Loop^r\K Y
  }
  \Bigr)
\end{align*}
where $d^0=m\id$, $d^1=\id m$, $\function{m}{\Loop^r}{\Loop^r\K=\Loop^r\Susp^r\Loop^r}$ denotes the $\K$-coaction map on $\Loop^r$ (defined by $m:=\eta\id$), and $\function{m}{Y}{\K Y}$ denotes the $\K$-coaction map on $Y$.

\begin{defn}
\label{defn:cobar_construction}
Let $Y$ be a $\K$-coalgebra space. The \emph{cosimplicial cobar construction} $C(Y):=\Cobar(\Loop^r,\K,Y)$ in $(\Space_*)^{\Delta}$ looks like
\begin{align}
\label{eq:cobar_construction}
&\xymatrix{
  C(Y): \quad\quad
  \Loop^r Y\ar@<0.5ex>[r]^-{d^0}\ar@<-0.5ex>[r]_-{d^1} &
  \Loop^r \K Y
  \ar@<1.0ex>[r]\ar[r]\ar@<-1.0ex>[r] &
  \Loop^r\K\K Y
  \cdots
}
\end{align}
(showing only the coface maps) and is defined objectwise by $C(Y)^n:=\Loop^r\K^n Y$ with the obvious coface and codegeneracy maps; see, for instance, the face and degeneracy maps in the simplicial bar constructions described in Gugenheim-May \cite[A.1]{Gugenheim_May} or May \cite[Section 7]{May_classifying_spaces}, and dualize. For instance, in \eqref{eq:cobar_construction} the indicated coface maps are defined by $d^0:=m\id$ and $d^1:=\id m$.
\end{defn}

\subsection{Coalgebras over the homotopical comonad $\Kt$}

A useful first step will be to interpret the cosimplicial $\Loopt^r\Susp^r$-resolution of $X$ in terms of a cosimplicial cobar construction that naturally arises as a ``fattened'' version of \eqref{eq:cobar_construction}; this naturally leads to the notion of $\Kt$-coalgebra exploited in Cohn \cite{Cohn}.

\begin{defn}
\label{defn:fibrant_replacement}
Denote by $\function{\eta}{\id}{F}$ and $\function{m}{FF}{F}$ the unit and multiplication maps of the simplicial fibrant replacement monad $F=Sing(|-|)$ on pointed spaces $\Space_*$; see, for instance, Dundas-Goodwillie-McCarthy \cite{Dundas_Goodwillie_McCarthy} and Goerss-Jardine \cite{Goerss_Jardine} (compare with \cite[8.2]{Ching_Harper}); it is shown in Blumberg-Riehl \cite[6.1]{Blumberg_Riehl} that simplicial fibrant replacement monads are available in many homotopical contexts. It follows that $\Loopt^r:=\Loop^r F$ and $\Kt:=\K F$ are the derived functors of $\Loop^r$ and $\K$, respectively. The comultiplication $\function{m}{\Kt}{\Kt\Kt}$ and counit $\function{\varepsilon}{\Kt}{F}$ maps are defined by the composites
\begin{align}
\label{eq:comultiplication_K_tilde}
  &\K F\xrightarrow{m\id}\K\K F=
  \K\id\K F\xrightarrow{\id\eta\id\id}
  \K F\K F\\
  \label{eq:counit_K_tilde}
  &\K F\xrightarrow{\varepsilon\id}\id F=F
\end{align}
respectively.
\end{defn}

It is shown in Blumberg-Riehl \cite[4.2, 4.4]{Blumberg_Riehl}, and subsequently exploited in Cohn \cite{Cohn}, that the derived functor $\Kt:=\K F$ of the comonad $\K$ is very nearly a comonad itself with the structure maps $\function{m}{\Kt}{\Kt\Kt}$ and $\function{\varepsilon}{\Kt}{F}$ above. For instance, it is proved in \cite{Blumberg_Riehl} that $\Kt$ defines a comonad on the homotopy category of $\Space_*$, which is a reflection of the the fact that $\Kt$ has the structure of a highly homotopy coherent comonad (see \cite{Blumberg_Riehl}); in particular, $\Kt$ has a strictly coassociative comultiplication $\function{m}{\Kt}{\Kt\Kt}$ and satisfies left and right counit identities up to factors of $F\wequiv\id$.

\begin{rem}
Associated to the adjunction $(\Susp^r,\Loop^r)$ is a left $\K$-coaction (or $\K$-coalgebra structure) $\function{m}{\Susp^r X}{\K \Susp^r X}$ on $\Susp^r X$, defined by $m=\id\eta\id$), for any $X\in\Space_*$. This map induces a corresponding left $\Kt$-coaction $\function{m}{\Susp^r X}{\Kt\Susp^r X}$ that is the composite 
\begin{align*}
  \Susp^r X\xrightarrow{m}
  \K\Susp^r X=\K\id\Susp^r X
  \xrightarrow{}\K F\Susp^r X
\end{align*}
\end{rem}

The following notion of a homotopy $\Kt$-coalgebra, exploited in Cohn \cite{Cohn}, captures exactly the left $\Kt$-coaction structure that iterated suspension $\Susp^r X$ of a pointed space $X$ satisfies; this is precisely the structure being encoded by the cosimplicial $\Loopt^r\Sigma^r$ resolution \eqref{eq:iterated_loop_suspension_resolution_introduction}. 
 
\begin{defn}
A \emph{homotopy $\Kt$-coalgebra} (or $\Kt$-coalgebra, for short) is a $Y\in\Space_*$ together with a map $\function{m}{Y}{\Kt Y}$ in $\Space_*$ such that the following diagrams
\begin{align*}
  \xymatrix{
  Y\ar[r]^-{m}\ar[d]_-{m} & \Kt Y\ar[d]^-{m\id}\\
  \Kt Y\ar[r]_-{\id m} & \Kt\Kt Y
  }\quad\quad\quad
  \xymatrix{
    FY\ar[r]^-{\id m}\ar@{=}[d] & F\Kt Y\ar[d]^-{(*)}\\
     FY\ar@{=}[r] & FY
  }
\end{align*}
commute; here, the map $(*)$ is the composite
$
  F\Kt Y\xrightarrow{\id\varepsilon\id}FFY\xrightarrow{m\id}FY
$.
\end{defn}

\begin{defn}
\label{defn:cobar_construction_fattened}
Let $Y$ be a $\Kt$-coalgebra. The \emph{cosimplicial cobar construction} $\mathfrak{C}(Y):=\Cobar(\Loopt^r,\Kt,Y)$ in $(\Space_*)^{\Delta}$ looks like
\begin{align}
\label{eq:cobar_construction_fattened_up}
\xymatrix{
  \mathfrak{C}(Y): \quad\quad
  \Loopt^r Y\ar@<0.5ex>[r]^-{d^0}\ar@<-0.5ex>[r]_-{d^1} &
  \Loopt^r\Kt Y
  \ar@<1.0ex>[r]\ar[r]\ar@<-1.0ex>[r] &
  \Loopt^r\Kt\Kt Y
  \cdots
}
\end{align}
(showing only the coface maps) and is defined objectwise by $\mathfrak{C}(Y)^n:=\Loopt^r\Kt^n Y=\Loop^r F(\K F)^n Y$ with the obvious coface and codegeneracy maps; for instance, in \eqref{eq:cobar_construction_fattened_up} the indicated coface maps are defined by $d^0:=m\id$ and $d^1:=\id m$; compare with \eqref{eq:cobar_construction}.
\end{defn}

The cosimplicial resolution \eqref{eq:iterated_loop_suspension_resolution_introduction} of a pointed space $X$ with respect to $\Loopt^r\Susp^r$, built by iterating the spaces-level stabilization map \eqref{eq:freudenthal_suspension_map_spaces_level_introduction}, is naturally isomorphic to the map $X\rightarrow\mathfrak{C}(\Susp^r X)$; conceptually, the homotopical comonad $\Kt$ naturally encodes the spaces-level co-operations on the iterated suspension spaces.

\begin{rem}
The derived functor $\Loopt^r$ has a naturally occurring right $\Kt$-coaction map $\function{m}{\Loopt^r}{\Loopt^r\Kt}$, defined by the composite
\begin{align*}
  \Omega^rF\xrightarrow{m\id}\Omega^r\K F
  =\Omega^r\id\Kt\xrightarrow{\id\eta\id}\Omega^r F\Kt,
\end{align*}
that makes the following diagrams
\begin{align*}
  \xymatrix{
    \Loopt^r\ar[r]^-{m}\ar[d]_-{m} & 
    \Loopt^r\Kt\ar[d]^-{m\id}\\
    \Loopt^r\Kt\ar[r]_-{\id m} & \Loopt^r\Kt\Kt
  }\quad\quad\quad
  \xymatrix{
    \Loopt^r\ar[r]^-{m}\ar@{=}[d] &
    \Loopt^r\Kt\ar[d]^-{(**)}\\
    \Loopt^r\ar@{=}[r] & \Loopt^r
  }
\end{align*}
commute; here, the map $(**)$ is the composite
$
  \Loop^r F\Kt\xrightarrow{\id\id\varepsilon}\Loop^r FF\xrightarrow{\id m}\Loop^r F
$.
\end{rem}

\begin{rem}
The homotopical comonad $\Kt$ makes the following diagrams
\begin{align*}
  \xymatrix{
    \Kt\ar[r]^-{m}\ar[d]_-{m} & 
    \Kt\Kt\ar[d]^-{m\id}\\
    \Kt\Kt\ar[r]_-{\id m} & \Kt\Kt\Kt
  }\quad\quad\quad
  \xymatrix{
    F\Kt\ar[r]^-{\id m}\ar@{=}[d] & F\Kt \Kt\ar[d]^-{(*)}\\
     F\Kt\ar@{=}[r] & F\Kt
  }\quad\quad\quad
  \xymatrix{
    \Kt\ar[r]^-{m}\ar@{=}[d] &
    \Kt\Kt\ar[d]^-{(**)}\\
    \Kt\ar@{=}[r] & \Kt
  }
\end{align*}
commute; here, the map $(*)$ is the composite
$
  F\Kt \Kt\xrightarrow{\id\varepsilon\id}FF\Kt\xrightarrow{m\id}F\Kt
$ and the map $(**)$ is the composite
$
  \K F\Kt\xrightarrow{\id\id\varepsilon}\K FF\xrightarrow{\id m}\K F
$.
\end{rem}

\begin{rem}
The counit map \eqref{eq:counit_K_tilde} is identical to the composite
\begin{align*}
  \K F=\id\K F\xrightarrow{\eta\id\id} F\K F\xrightarrow{\id\varepsilon\id} F\id F=FF\xrightarrow{m} F
\end{align*}
This may be useful when comparing with \cite{Blumberg_Riehl}.
\end{rem}

\subsection{Higher Freudenthal suspension}

In this section we prove Theorem \ref{thm:higher_freudenthal_suspension};  we will freely make use of the notations and definitions associated to cubical diagrams in Goodwillie \cite{Goodwillie_calculus_2} and Ching-Harper \cite{Ching_Harper}.

Recall the following from Dundas \cite{Dundas_relative_K_theory} and Dundas-Goodwillie-McCarthy \cite{Dundas_Goodwillie_McCarthy}.

\begin{defn}
Let $\function{f}{\NN}{\NN}$ be a function and $W$ a finite set. A $W$-cube $\capX$ is \emph{$f$-cartesian} (resp. \emph{$f$-cocartesian}) if each $d$-subcube of $\capX$ is $f(d)$-cartesian (resp. $f(d)$-cocartesian); here, $\NN$ denotes the non-negative integers.
\end{defn}

The following proposition will be helpful in organizing the proof of the higher Freudenthal suspension theorem below; compare with \cite[A.8.3]{Dundas_Goodwillie_McCarthy}.

\begin{prop}[Uniformity correspondence]
\label{prop:uniformity_correspondence}
Let $k\geq 1$ and $W$ a finite set. A $W$-cube of pointed spaces is $(k(\id+1)+1)$-cartesian if and only if it is $((k+1)(\id+1)-1)$-cocartesian.
\end{prop}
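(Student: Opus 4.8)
The plan is to prove this via the Blakers-Massey theorem and its dual, which give the standard bridge between cartesian-ness and cocartesian-ness estimates for cubes of pointed spaces. Recall that for a $W$-cube $\capX$ with $|W|=d$, the Blakers-Massey theorem says roughly that if $\capX$ is suitably cocartesian on its subcubes, then it is cartesian with connectivity governed by summing the cocartesian estimates; the dual Blakers-Massey theorem runs in the reverse direction. The key observation making the correspondence an ``if and only if'' with exactly matching constants is that the two linear functions $k(\id+1)+1$ and $(k+1)(\id+1)-1$ are precisely calibrated so that the Blakers-Massey numerology becomes an equivalence rather than a one-directional implication.

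First I would unwind the definitions: a $W$-cube is $(k(\id+1)+1)$-cartesian means every $d$-subcube is $(k(d+1)+1)$-cartesian, and similarly $((k+1)(\id+1)-1)$-cocartesian means every $d$-subcube is $((k+1)(d+1)-1)$-cocartesian. Since these conditions are stated subcube-by-subcube and both functions are evaluated at the same dimension $d$, it suffices to verify the correspondence dimensionwise and then quantify over all subcubes. So the real content reduces to a statement about a single $d$-cube: being $(k(d+1)+1)$-cartesian on all its faces is equivalent to being $((k+1)(d+1)-1)$-cocartesian on all its faces. I would then invoke the precise form of Blakers-Massey for $d$-cubes (as in Goodwillie's \emph{Calculus II} \cite{Goodwillie_calculus_2}), which converts an $f$-cocartesian hypothesis into an $f$-cartesian conclusion where the connectivity shift is determined by the minimum over partitions of $W$ of the sum of the subcube connectivities, and the dual statement going the other way. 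The arithmetic check is to confirm that with $f(d)=k(d+1)+1$ on the cartesian side and $f(d)=(k+1)(d+1)-1$ on the cocartesian side, both Blakers-Massey inequalities are saturated, giving implications in both directions.

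The main obstacle, and the step requiring genuine care, will be the arithmetic bookkeeping in the Blakers-Massey partition estimate: one must check that for every way of splitting a $d$-cube into its cofaces (or faces), the sum of the connectivity contributions matches up exactly, so that no slack is lost and the implication can be reversed. Concretely, I expect the verification to hinge on an identity of the shape $\sum_i \bigl(k(d_i+1)+1\bigr) = (k+1)(d+1)-1$ (or the dual), where the $d_i$ partition the cube dimension $d$; the coefficients $k$ versus $k+1$ and the constants $+1$ versus $-1$ are chosen precisely to make this telescope correctly. I would present the proof by first fixing a $d$-subcube, applying Blakers-Massey in the forward direction to get one implication and its dual for the converse, and then discharging the numerical identity as a short lemma or inline computation. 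Since the statement is an equivalence and both Blakers-Massey inequalities are known to be sharp in the relevant range (for $k\geq 1$, which guarantees the connectivities stay in the regime where the theorems apply), the two implications combine to give the claimed ``if and only if.''
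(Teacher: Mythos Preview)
Your approach is essentially the paper's: both directions come from Goodwillie's higher Blakers--Massey and its dual, and the content is purely arithmetic. The paper organizes this as an induction on $|W|$ (the cases $|W|=0,1$ being tautological), which is a clean way to express exactly what you call ``quantifying over all subcubes.''

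One point needs correction. The check is \emph{not} an identity of the form $\sum_i\bigl(k(d_i+1)+1\bigr)=(k+1)(d+1)-1$; that equation fails for nontrivial partitions. What you actually need is the Blakers--Massey estimate, which for an $n$-cube has the shape $(n-1)+\min_{\lambda}\sum_{T\in\lambda}k_T$ (and dually $(1-n)+\min_{\lambda}\sum_{T\in\lambda}k_T$), where $\lambda$ runs over partitions of $W$ into nonempty blocks. With $k_T=k(|T|+1)+1$ the sum over a partition into $p$ blocks is $kn+(k+1)p$, so the minimum is achieved at the trivial one-block partition, giving cocartesian-ness $(n-1)+k(n+1)+1=(k+1)(n+1)-1$ as desired; the other direction is the symmetric computation. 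So the verification is that the minimum is attained at the coarsest partition, not that all partitions give the same value. Once you rewrite your ``identity'' as this minimization check (and fold in the $\pm(n-1)$ shift), your argument goes through and matches the paper's.
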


\begin{proof}
This is tautologically true for $|W|=0,1$. Let $n\geq 2$. Assume the statement is true for all $|W|<n$; let's verify it is true for $|W|=n$. Let $W=\{1,\dotsb,n\}$ and suppose $\capX$ is a $W$-cube of pointed spaces. Assume that $\capX$ is $(k(\id+1)+1)$-cartesian; let's verify $\capX$ is $((k+1)(\id+1)-1)$-cocartesian. By the induction hypothesis, it suffices to verify that $\capX$ is $(k(n+1)+n)$-cocartesian; this follows easily from Goodwillie's \cite[2.6]{Goodwillie_calculus_2} higher dual Blakers-Massey theorem. Conversely, assume that $\capX$ is $((k+1)(\id+1)-1)$-cocartesian; let's verify $\capX$ is $(k(\id+1)+1)$-cartesian. By the induction hypothesis, it suffices to verify that $\capX$ is $(k(n+1)+1)$-cartesian; this follows easily from Goodwillie's \cite[2.5]{Goodwillie_calculus_2} higher Blakers-Massey theorem.
\end{proof}

The following theorem plays a key role in our homotopical analysis of the derived counit map below; it also provides an alternate proof, with stronger estimates, of the result in Bousfield \cite{Bousfield_cosimplicial_space} that the $\Loopt^r\Susp^r$-completion map $X\rarrow X^\wedge_{\Loopt^r\Susp^r}$ is a weak equivalence for any $1$-connected space $X$. Our argument is closely related to Dundas \cite[2.6]{Dundas_relative_K_theory}.

\begin{thm}[Higher Freudenthal suspension theorem]
\label{thm:higher_freudenthal_suspension}
Let $k\geq 1$, $W$ a finite set, and $\capX$ a $W$-cube of pointed spaces. If $\capX$ is $(k(\id+1)+1)$-cartesian, then so is $\capX\rarrow\tilde{\Loop}^r\Susp^r\capX$.
\end{thm}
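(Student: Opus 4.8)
The plan is to induct on the cardinality $|W|$, taking the classical (absolute) Freudenthal suspension theorem as the base case and reducing the inductive step to a single connectivity estimate for the top cube, which is the genuinely new content. Write $n=|W|$ and regard the comparison $\capX\rarrow\Loopt^r\Susp^r\capX$ as an $(n+1)$-cube $\capZ$, indexed by $W$ together with one new direction $v$, whose face at $v=0$ is $\capX$, whose face at $v=1$ is $\Loopt^r\Susp^r\capX$, and whose edges in the $v$-direction are the spaces-level Freudenthal maps. To say $\capZ$ is $(k(\id+1)+1)$-cartesian is to say every $d$-subcube is $(k(d+1)+1)$-cartesian. When $n=0$ the hypothesis says $X=\capX$ is $(k+1)$-cartesian, i.e. $k$-connected, and $\capZ$ is the single Freudenthal map $X\rarrow\Loopt^r\Susp^r X$, which is $(2k+1)$-connected; this is exactly $(2k+1)=f(1)$-cartesian, and it already pins down why $f=k(\id+1)+1$ is the correct function.

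For the inductive step I would classify the $d$-subcubes of $\capZ$. Those lying in the face $v=0$ are subcubes of $\capX$, hence $(k(d+1)+1)$-cartesian by hypothesis. Those meeting the direction $v$ nontrivially and of dimension $\leq n$ are themselves comparison cubes $\capY\rarrow\Loopt^r\Susp^r\capY$ of proper subcubes $\capY\subsetneq\capX$, hence $(k(\id+1)+1)$-cartesian by the inductive hypothesis. Those lying in the face $v=1$ are of the form $\Loopt^r\Susp^r\capY$; their cartesian-ness I would supply by a \emph{Preservation Lemma} asserting that $\Loopt^r\Susp^r$ sends $(k(\id+1)+1)$-cartesian cubes to $(k(\id+1)+1)$-cartesian cubes. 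This leaves only the top $(n+1)$-cube $\capZ$ itself, which must be shown to be $(k(n+2)+1)$-cartesian.

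Both remaining inputs run on the same engine: Blakers-Massey for the suspended cube. By the uniformity correspondence (Proposition~\ref{prop:uniformity_correspondence}), $\capX$ is $(k(\id+1)+1)$-cartesian if and only if it is $((k+1)(\id+1)-1)$-cocartesian; since $\Susp^r=S^r\Smash(-)$ preserves homotopy colimits and raises connectivity by $r$, the cube $\Susp^r\capX$ is $((k+1)(\id+1)-1+r)$-cocartesian. Feeding this into Goodwillie's higher Blakers-Massey theorem \cite{Goodwillie_calculus_2} yields strong cartesian estimates on $\Susp^r\capX$; because $\Loopt^r$ commutes with total homotopy fibers and lowers connectivity by exactly $r$, these give the Preservation Lemma. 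For the top cube I would use the natural weak equivalences $\mathrm{tfib}(\capZ)\wequiv\hofib\bigl(\mathrm{tfib}(\capX)\rarrow\Loopt^r\,\mathrm{tfib}(\Susp^r\capX)\bigr)$ and factor the displayed map, by naturality of the Freudenthal transformation, as the classical Freudenthal map on the $k(n+1)$-connected space $\mathrm{tfib}(\capX)$ followed by $\Loopt^r$ applied to the comparison $\alpha\colon\thinspace\Susp^r\mathrm{tfib}(\capX)\rarrow\mathrm{tfib}(\Susp^r\capX)$. The Freudenthal factor is $(2k(n+1)+1)$-connected, which comfortably beats the required $(k(n+2)+1)$, so the binding constraint is the connectivity of $\alpha$.

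The hard part will be the estimate for $\alpha$, i.e. controlling the failure of iterated suspension to commute with the total homotopy fiber of $\capX$. I expect to extract it from the high cocartesian-ness of $\Susp^r\capX$ already in hand: the total cofiber commutes with $\Susp^r$, and Blakers-Massey in the cocartesian-to-cartesian direction converts the resulting estimate into connectivity for $\alpha$. Conceptually this is the cubical avatar of the James splitting, in which the defect of the Freudenthal map is carried by the smash-square and higher terms, which are highly connected precisely because $\capX$ is cartesian. The delicate point is bookkeeping the constants so that, after the loss of $r$ incurred by $\Loopt^r$, the composite still achieves $(k(n+2)+1)$-connectivity uniformly in $n$; here the calibration of $f=k(\id+1)+1$ to the uniformity correspondence is exactly what makes the estimates close up. Treating general $r$ in one stroke, rather than iterating the $r=1$ case, is cleaner, since both ``$\Susp^r$ raises cocartesian-ness by $r$'' and ``$\Loopt^r$ lowers cartesian-ness by $r$'' are exact.
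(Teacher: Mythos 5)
Your scaffolding is sound and, at the level of inputs, coincides with the paper's: induction on $|W|$ with classical Freudenthal as base case, the uniformity correspondence (Proposition \ref{prop:uniformity_correspondence}) to pass between cartesian and cocartesian estimates, Goodwillie's higher Blakers--Massey theorems in both directions, and a classification of the subcubes of the comparison $(n+1)$-cube. Your Preservation Lemma is correct and your sketch of it is complete with the right constants: $\capX$ is $((k+1)(\id+1)-1)$-cocartesian, so $\Susp^r\capX$ is $((k+1)(\id+1)-1+r)$-cocartesian, whence \cite[2.5]{Goodwillie_calculus_2} makes $\Susp^r\capX$ exactly $(k(\id+1)+1+r)$-cartesian (the minimum in the partition formula occurs at the one-block partition), and $\Loop^r$ lowers cartesian-ness by precisely $r$.

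The genuine gap is the connectivity of $\alpha\colon\thinspace\Susp^r\,\mathrm{tfib}(\capX)\rarrow\mathrm{tfib}(\Susp^r\capX)$, which your plan makes the binding constraint and then defers. You need $\alpha$ to be $(k(n+2)+1+r)$-connected, a bound growing linearly in $r$, and this does not follow from ``the high cocartesian-ness of $\Susp^r\capX$ already in hand'' by any single standard conversion: applying Blakers--Massey to $\Susp^r\capX$ yields only the Preservation Lemma, which says nothing about $\alpha$; to reach $\alpha$ you must apply Blakers--Massey to an auxiliary $(n+1)$-cube relating $\Susp^r\,\mathrm{tfib}(\capX)$ to $\Susp^r\capX$, and constructing that cube with sharp enough estimates is exactly the missing content. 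Worse, the natural triangulation through the total cofiber $C$ --- comparing both sides of $\alpha$ to $\Loop^n\Susp^r C$ via the interchange map $\Susp^r\Loop^n C\rarrow\Loop^n\Susp^r C$ --- is capped by the connectivity of that interchange, roughly $2k(n+1)+1$ and \emph{independent of $r$} (the $r$-fold composite is bottlenecked at its first stage), so it falls short of $k(n+2)+1+r$ whenever $r>kn$. The bound on $\alpha$ is in fact true (for $n=1$ one gets exactly $3k+r+1$ by suspending the fiber square and using the sharp estimate that $E/F\rarrow B$ is $(\mathrm{conn}\,F+\mathrm{conn}\,B+2)$-connected for a fibration --- note the crude dual Blakers--Massey bound loses $2$ and fails, and the constants close with zero slack), but securing it uniformly in $n$ is a theorem, not bookkeeping. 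The paper's proof is engineered to avoid $\alpha$ altogether: it forms the $\infty$-cocartesian map $\capX\rarrow\capC$ to the cube concentrated at the terminal vertex on the iterated cofiber $C$, which is $((k+1)(|W|+1)-1)$-connected; applies classical Freudenthal only to the single space $C$, where the resulting cartesian-ness of $\capC\rarrow\Loopt^r\Susp^r\capC$ exceeds the target by $kn+n$, uniformly in $r$; gets $(k(n+2)+1)$- and $(k(n+2)+1+2r)$-cartesian-ness of $\capX\rarrow\capC$ and $\Susp^r\capX\rarrow\Susp^r\capC$ from the two Blakers--Massey theorems, so the $r$-dependence enters only favorably; and then isolates the map $(*)=(\capX\rarrow\Loopt^r\Susp^r\capX)$ from the square of cubes \eqref{eq:diagram_of_cubes_for_higher_freudenthal} by the composition estimates \cite[1.8]{Goodwillie_calculus_2}, never commuting $\Susp^r$ past a total homotopy fiber. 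If you want to salvage your fiber-side route, you would have to run the same two-directional Blakers--Massey analysis on an $\infty$-cartesian completion of $\capX$; at that point the cofiber-side comparison through $\capC$ is the cleaner packaging of the same idea.
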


\begin{proof}
Consider the case $|W|=0$. Suppose $\capX$ is a $W$-cube and $\capX_\emptyset$ is $k$-connected. We know by Freudenthal suspension, which can be understood as a consequence of the Blakers-Massey theorem (see, for instance, \cite[A.8.2]{Dundas_Goodwillie_McCarthy}), that the map $\capX_\emptyset\rarrow\tilde{\Loop}\Susp\capX_\emptyset$ is $(2k+1)$-connected. More generally, it follows by repeated application of Freudenthal suspension that the composite $\capX_\emptyset\rarrow\tilde{\Loop}^r\Susp^r\capX_\emptyset$ is a $(2k+1)$-connected map between $k$-connected spaces. 

Consider the case $|W|\geq 1$. Suppose $\capX$ is a $W$-cube and $\capX$ is $(k(\id+1)+1)$-cartesian. Let's verify that $\capX\rarrow\tilde{\Loop}^r\Susp^r\capX$ is $(k(\id+1)+1)$-cartesian $(|W|+1)$-cube. It suffices to assume that $\capX$ is a cofibration $W$-cube; see \cite[1.13]{Goodwillie_calculus_2} and \cite[3.4]{Ching_Harper}. Let $C$ be the iterated cofiber of $\capX$ and $\capC$ the $W$-cube defined objectwise by $\capC_V={*}$ for $V\neq W$ and $\capC_W=C$. Then $\capX\rarrow\capC$ is $\infty$-cocartesian. Consider the commutative diagram
\begin{align}
\label{eq:diagram_of_cubes_for_higher_freudenthal}
\xymatrix{
  \capX\ar[r]\ar[d]_(0.4){(*)} & 
  \capC\ar[d]\\
  \tilde{\Loop}^r\Susp^r\capX\ar[r] & 
  \tilde{\Loop}^r\Susp^r\capC
}
\end{align}
of $|W|$-cubes. 

Let's verify that $(*)$ is $(k(|W|+2)+1)$-cartesian as a $(|W|+1)$-cube of pointed spaces. We know that $\capX$ is $((k+1)(\id+1)-1)$-cocartesian by Proposition \ref{prop:uniformity_correspondence}, and in particular, $C$ is $((k+1)(|W|+1)-1)$-connected. For $d<|W|$, any $(d+1)$ dimensional subcube of $\capX$ is $((k+1)(d+2)-1)=((k+1)(d+1)+k)$-cocartesian and any $d$ dimensional subcube of $\capX$ is $((k+1)(d+1)-1)$-cocartesian.  So if $\capX|T$ is some $d$-subcube of $\capX$ with $T$ not containing the terminal set $W$, then $\capX|T\rarrow\capC|T={*}$ is $(k+1)(d+1)$-cocartesian by \cite[1.7]{Goodwillie_calculus_2}. Furthermore, even if $T$ contains the terminal set $W$, we know that $\capX|T\rarrow\capC|T$ is still $(k+1)(d+1)$-cocartesian by \cite[1.7]{Goodwillie_calculus_2}; this is because $(k+1)(d+1)<(k+1)(|W|+1)-1$ since $k\geq 1$ and $d<|W|$. Hence $\capX|T\rarrow\capC|T$ is $(k+1)(d+1)$-cocartesian for any $d$-subcube $\capX|T$ of $\capX$. It follows easily from higher Blakers-Massey \cite[2.5]{Goodwillie_calculus_2} that $\capX\rarrow\capC$ is $(k(|W|+2)+1)$-cartesian. Similarly, it follows that $\Susp^r\capX\rarrow\Susp^r\capC$ is $(k(|W|+2)+1+2r)$-cartesian and hence $\tilde{\Loop}^r\Susp^r\capX\rarrow\tilde{\Loop}^r\Susp^r\capC$ is $(k(|W|+2)+1+r)$-cartesian. Also, $\capC\rarrow\tilde{\Loop}^r\Susp^r\capC$ is at least $(k(|W|+2)+1)$-cartesian since $C\rarrow\tilde{\Loop}^r\Susp^r C$ is $(2[(k+1)(|W|+1)-1]+1)$-connected by Freudenthal suspension; this is because the cartesian-ness of $\capC\rarrow\tilde{\Loop}^r\Susp^r\capC$ is the same as the connectivity of the map $\tilde{\Loop}^{|W|}\capC\rarrow\tilde{\Loop}^{|W|}\tilde{\Loop}^r\Susp^r\capC$ (by considering iterated homotopy fibers).

Putting it all together, it follows from diagram \eqref{eq:diagram_of_cubes_for_higher_freudenthal} and \cite[1.8]{Goodwillie_calculus_2} that the map $(*)$ is $(k(|W|+2)+1)$-cartesian; this is because $k(|W|+2)+1<k(|W|+2)+1+r$. Doing this also on all subcubes gives the result.
\end{proof}

\subsection{Homotopical estimates and codegeneracy cubes}

\label{sec:cubical_diagrams_homotopical_analysis}

In this section we prove Theorems \ref{thm:estimating_connectivity_of_maps_in_tower_C_of_Y} and \ref{thm:connectivities_for_map_that_commutes_Z_into_inside_of_holim}.

The following is a dressed up form of Bousfield-Kan's calculation \cite[X.6.3]{Bousfield_Kan} of the layers of the $\Tot$ tower; it also highlights the homotopical significance of the codegeneracy $n$-cubes $\capY_n$.

\begin{prop}
\label{prop:iterated_homotopy_fibers_calculation}
Let $Z$ be a cosimplicial pointed space and $n\geq 0$. There are natural zigzags of weak equivalences
\begin{align*}
  \hofib(\holim_{\Delta^{\leq n}}Z\rarrow\holim_{\Delta^{\leq n-1}}Z)
  \wequiv\Omega^n(\iter\hofib)\capY_n
\end{align*}
where $\capY_n$ denotes the canonical $n$-cube built from the codegeneracy maps of
\begin{align*}
\xymatrix{
  Z^0 &
  Z^1
  \ar[l]_-{s^0} &
  Z^2\ar@<-0.5ex>[l]_-{s^0}\ar@<0.5ex>[l]^-{s^1}
  \ \cdots\ Z^n
}
\end{align*}
 the $n$-truncation of $Z$; in particular, $\capY_0$ is the object (or $0$-cube) $Z^0$. We often refer to $\capY_n$ as the \emph{codegeneracy} $n$-cube associated to $Z$.
\end{prop}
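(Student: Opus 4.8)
The plan is to reduce to Bousfield--Kan's computation \cite[X.6.3]{Bousfield_Kan} of the layers of the $\Tot$ tower, and then to rewrite the $n$-th layer---which is classically packaged in terms of a point-set matching object---in the homotopy-invariant language of iterated homotopy fibers of the codegeneracy cube $\capY_n$. First I would identify $\holim_{\Delta^{\leq n}}Z$ with the $n$-th partial totalization $\Tot_n$ of a Reedy fibrant replacement of $Z$; since homotopy limits over the truncated indexing category $\Delta^{\leq n}$ are computed by $\Tot_n$ (see, e.g., \cite{Bousfield_Kan, Goerss_Jardine}), the map $\holim_{\Delta^{\leq n}}Z\rarrow\holim_{\Delta^{\leq n-1}}Z$ is identified with the structure map $\Tot_n Z\rarrow\Tot_{n-1}Z$ of the $\Tot$ tower. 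After this reduction it suffices to analyze the homotopy fiber of a single map in that tower.

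Second, I would invoke the classical layer calculation: the homotopy fiber of $\Tot_n Z\rarrow\Tot_{n-1}Z$ is naturally $\Omega^n$ of the homotopy fiber of the $n$-th codegeneracy matching map $Z^n\rarrow M^n Z$, where $M^n Z$ denotes the matching object assembled from the codegeneracies $s^0,\dotsc,s^{n-1}\colon Z^n\rarrow Z^{n-1}$. For $n=0$ this reduces to $\Tot_0 Z=Z^0=\capY_0$, which matches the stated identification of $\capY_0$ as the $0$-cube $Z^0$.

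Third---the ``dressing up'' step---I would identify the homotopy fiber $\hofib(Z^n\rarrow M^n Z)$ with the total (iterated) homotopy fiber $(\iter\hofib)\capY_n$ of the codegeneracy $n$-cube. The punctured $n$-cube obtained from $\capY_n$ by deleting its initial vertex $Z^n$ has homotopy limit the (homotopically corrected) matching object $M^n Z$, so the total homotopy fiber of the full cube---the homotopy fiber of the map from the initial vertex to the homotopy limit of the punctured cube---is precisely $\hofib(Z^n\rarrow M^n Z)$. Splicing the three identifications together gives the natural zigzag $\hofib(\holim_{\Delta^{\leq n}}Z\rarrow\holim_{\Delta^{\leq n-1}}Z)\wequiv\Omega^n(\iter\hofib)\capY_n$, with naturality in $Z$ inherited from each step.

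The step I expect to be the main obstacle is the homotopy-invariance bookkeeping in the translation of the third step: the point-set matching object appearing in Bousfield--Kan only computes the homotopy-invariant total fiber under Reedy fibrancy, so I would either arrange $Z$ to be Reedy fibrant from the outset or replace $\capY_n$ by a fibration cube, and then track carefully where fibrant replacement is inserted so that the resulting comparison remains a \emph{natural} zigzag of weak equivalences compatible with the loop functor $\Omega^n$. The underlying equivalence between the fiber of a matching map and the total homotopy fiber of the associated codegeneracy cube is standard, but stating it with the requisite naturality is the only genuinely delicate point.
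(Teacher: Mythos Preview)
Your proposal is correct and follows precisely the route the paper indicates: the paper does not spell out a proof but simply describes the proposition as ``a dressed up form of Bousfield--Kan's calculation \cite[X.6.3]{Bousfield_Kan} of the layers of the $\Tot$ tower,'' and your three steps (identify $\holim_{\Delta^{\leq n}}$ with $\Tot_n$, invoke the Bousfield--Kan layer computation, and reinterpret the fiber of the matching map as the total homotopy fiber of the codegeneracy cube) are exactly the dressing-up being alluded to. Your remarks about Reedy fibrancy and naturality are the appropriate bookkeeping to make the zigzag honest.
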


The following is proved in Carlsson \cite[Section 6]{Carlsson}, Dugger \cite{Dugger_homotopy_colimits}, and Sinha \cite[6.7]{Sinha_cosimplicial_models}, and plays a key role in this paper; see also  Dundas-Goodwillie-McCarthy \cite{Dundas_Goodwillie_McCarthy} and Munson-Volic \cite{Munson_Volic_book_project}; it was exploited early on by Hopkins \cite{Hopkins_iterated_suspension}.

\begin{prop}
\label{prop:left_cofinality_truncated_delta}
Let $n\geq 0$. The composite
\begin{align*}
  \capP_0([n])\Iso P\Delta[n]\longrightarrow\Delta_\res^{\leq n}
  \subset\Delta^{\leq n}
\end{align*}
is left cofinal (i.e., homotopy initial). Here, $\capP_0([n])$ denotes the poset of all nonempty subsets of $[n]$ and $P\Delta[n]$ denotes the poset of non-degenerate simplices of the standard $n$-simplex $\Delta[n]$; see \cite[III.4]{Goerss_Jardine}.
\end{prop}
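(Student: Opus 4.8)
The plan is to deduce left cofinality from Quillen's Theorem A by showing that the relevant comma categories are contractible. Write $F$ for the composite $\capP_0([n])\xrightarrow{\cong} P\Delta[n]\to\Delta^{\leq n}$; concretely, a nonempty subset $S=\{s_0<\cdots<s_k\}\subseteq[n]$ is the nondegenerate $k$-simplex $\sigma_S\colon[k]\hookrightarrow[n]$ listing $S$, and $F$ sends it to its domain $[k]$ and sends an inclusion $S\subseteq S'$ to the induced coface composite $[|S|-1]\to[|S'|-1]$. Homotopy initiality (left cofinality) is precisely the statement that for every object $[m]\in\Delta^{\leq n}$ the comma category of pairs $(S,\,F(S)\to[m])$, with $S\in\capP_0([n])$, has contractible nerve, so it suffices to analyze these categories one $[m]$ at a time.

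First I would identify this comma category $(F\downarrow[m])$ with an explicit poset. An object is a pair $(S,\psi)$ with $\emptyset\neq S\subseteq[n]$ and $\psi\colon[|S|-1]\to[m]$ an order-preserving map; crucially, $\psi$ may be noninjective, since the target lives in the full truncated $\Delta^{\leq n}$ rather than in $\Delta_\res^{\leq n}$. Precomposing $\psi$ with $\sigma_S^{-1}$ rewrites this datum as a weakly monotone labeling $\lambda\colon S\to[m]$, and one checks that a morphism $(S,\psi)\to(S',\psi')$ unwinds to the single condition $S\subseteq S'$ together with $\lambda=\lambda'|_S$. Thus $(F\downarrow[m])$ is the poset $Q_{n,m}$ of pairs $(S,\lambda)$ --- a nonempty $S\subseteq[n]$ equipped with a weakly increasing $\lambda\colon S\to[m]$ --- ordered by extension. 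Getting the variance right and verifying that the morphisms collapse to this clean ``extend $S$, restrict $\lambda$'' description is the one genuinely fiddly bookkeeping step.

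Next I would contract $Q_{n,m}$. It has no initial or terminal object --- the many singletons, carrying distinct labels, are all minimal --- so a direct cone argument fails and I would instead deformation retract onto a better-behaved subposet. Define the self-map $\alpha(S,\lambda)=(S\cup\{n\},\lambda^{+})$, where $\lambda^{+}$ restricts to $\lambda$ on $S$ and sends $n\mapsto m$; adjoining the top corner $(n,m)$ preserves both weak monotonicity and the extension order, so $\alpha$ is monotone, and the pointwise inequality $(S,\lambda)\leq\alpha(S,\lambda)$ furnishes a natural transformation $\id\Rightarrow\alpha$. Since $\alpha$ lands in, and restricts to the identity on, the subposet $Q'=\{(S,\lambda): n\in S,\ \lambda(n)=m\}$, this exhibits $Q'$ as a deformation retract of $Q_{n,m}$.

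Finally, $Q'$ has an initial object, namely $(\{n\},\,n\mapsto m)$: for any $(S,\lambda)\in Q'$ one has $n\in S$ and $\lambda(n)=m$, so there is a unique extension morphism out of $(\{n\},n\mapsto m)$. An initial object makes the nerve contractible, so $(F\downarrow[m])\simeq Q_{n,m}\simeq Q'$ is contractible for every $[m]$, and Quillen's Theorem A gives the result. I expect the main obstacle to be the second step---pinning down the comma category and confirming that its morphisms reduce to the extend/restrict description; once that reformulation is in hand, the retraction and the initial object in the third and fourth steps are essentially forced and transparent.
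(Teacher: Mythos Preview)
The paper does not supply its own proof of this proposition; it simply records the statement and points to Carlsson, Dugger, and Sinha for the argument. So there is nothing to compare against, and the question is just whether your proposed proof stands on its own.

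Your setup is fine: Quillen's Theorem~A is the right tool, and your identification of the comma category $(F\downarrow[m])$ with the poset $Q_{n,m}$ of pairs $(S,\lambda)$---a nonempty $S\subseteq[n]$ together with a weakly monotone $\lambda\colon S\to[m]$, ordered by extension---is correct and carefully done.

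The gap is in the contraction step. Your map $\alpha(S,\lambda)=(S\cup\{n\},\lambda^{+})$, with $\lambda^{+}|_S=\lambda$ and $\lambda^{+}(n)=m$, is ill-defined when $n\in S$ and $\lambda(n)<m$: the two conditions on $\lambda^{+}(n)$ are then contradictory. If instead you interpret $\alpha$ as leaving such $(S,\lambda)$ fixed, then $\alpha$ is no longer monotone: with $n=m=1$, take $(\{0\},0\mapsto 0)\leq(\{0,1\},(0,0))$; your $\alpha$ sends the first to $(\{0,1\},(0,1))$ and fixes the second, and these are incomparable. More seriously, no single natural transformation can realize a retraction onto your target $Q'=\{(S,\lambda):n\in S,\ \lambda(n)=m\}$ at all. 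In the same example the element $C=(\{1\},1\mapsto 0)$ is comparable to nothing in $Q'$, so neither $C\leq r(C)$ nor $r(C)\leq C$ is achievable for any retraction $r$ landing in $Q'$. Thus the strategy of retracting onto $Q'$ via one inequality $\id\Rightarrow\alpha$ cannot work as stated.

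The result is true and your identification of $Q_{n,m}$ is the right starting point, but the contraction needs a different device---for instance a longer zigzag of monotone self-maps, or an inductive argument on $m$ (the base case $m=0$ gives $Q_{n,0}\cong\capP_0([n])$, which has a terminal object). The references the paper cites contain workable arguments along these lines.
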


\begin{prop}
\label{prop:punctured_cube_calculation_of_holim_truncated_delta}
If $X\in\M^\Delta$ is objectwise fibrant, then the natural maps
\begin{align*}
  \holim\nolimits_{\Delta^{\leq n}}^\BK X&\xrightarrow{\wequiv}
  \holim\nolimits_{P\Delta[n]}^\BK X\Iso
  \holim\nolimits_{\capP_0([n])}^\BK X
\end{align*}
in $\M$ are weak equivalences.
\end{prop}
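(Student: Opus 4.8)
The plan is to treat the two displayed maps separately, since they are of entirely different natures. The right-hand map $\holim^\BK_{P\Delta[n]} X \Iso \holim^\BK_{\capP_0([n])} X$ is purely formal: it is induced by the isomorphism of indexing categories $\capP_0([n]) \Iso P\Delta[n]$ recorded in Proposition \ref{prop:left_cofinality_truncated_delta}, under which the restricted diagrams agree on the nose, and the Bousfield-Kan homotopy limit is functorial in the indexing category. So there is nothing to prove there beyond unwinding definitions, and I would dispose of it in one sentence.

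The left-hand map is the substantive part, and I would obtain it as an instance of the cofinality theorem for Bousfield-Kan homotopy limits: if $u\colon\thinspace J\to I$ is a homotopy initial (left cofinal) functor and $X\colon\thinspace I\to\M$ is objectwise fibrant, then restriction along $u$ induces a weak equivalence $\holim^\BK_I X\xrightarrow{\wequiv}\holim^\BK_J u^*X$; see Bousfield-Kan \cite{Bousfield_Kan}. I would apply this to the composite functor $u\colon\thinspace\capP_0([n])\Iso P\Delta[n]\to\Delta^{\leq n}$, whose left cofinality is exactly the content of Proposition \ref{prop:left_cofinality_truncated_delta}. Since $X\in\M^\Delta$ is objectwise fibrant by hypothesis, the restriction $u^*X$ is objectwise fibrant as well, so the hypotheses of the cofinality theorem are satisfied and the natural map $\holim^\BK_{\Delta^{\leq n}}X\to\holim^\BK_{P\Delta[n]}X$ is a weak equivalence.

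It is worth being explicit about where the hypotheses enter. The objectwise fibrancy guarantees that the Bousfield-Kan construction computes the homotopically meaningful (right derived) limit, which is precisely the setting in which the cofinality theorem is valid; without it the displayed maps need not be weak equivalences. The genuinely nontrivial input—the reason the proposition is not a triviality about punctured cubes—is the passage from the full truncated indexing category $\Delta^{\leq n}$, which contains the codegeneracy (surjective) maps, to the poset of nondegenerate simplices; this is handled for us by Proposition \ref{prop:left_cofinality_truncated_delta} through the intermediate restricted subcategory $\Delta_\res^{\leq n}$. The main obstacle, such as it is, is therefore not conceptual but one of bookkeeping: one must confirm that the ambient model category $\M$ supports Bousfield-Kan homotopy limits with the stated cofinality property and that the cofinality theorem is cited in the correct enriched generality. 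Given the framework already in force, this is routine, and the proof reduces to assembling Proposition \ref{prop:left_cofinality_truncated_delta} with the standard cofinality theorem.
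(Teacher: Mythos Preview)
Your proposal is correct and matches the paper's approach: the paper states this proposition immediately after Proposition~\ref{prop:left_cofinality_truncated_delta} without supplying a separate proof, since it is the intended consequence of that cofinality statement via the standard Bousfield--Kan cofinality theorem for homotopy limits over objectwise fibrant diagrams. Your treatment of the two maps---the formal isomorphism from the identification of indexing categories, and the weak equivalence from left cofinality---is exactly what the paper has in mind.
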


\begin{rem}
We follow the conventions and definitions in Bousfield-Kan \cite{Bousfield_Kan} and Ching-Harper \cite{Ching_Harper_derived_Koszul_duality} for the various models of homotopy limits; see also \cite{Blomquist_Harper_integral_chains}.
\end{rem}

\begin{defn}
\label{defn:the_wide_tilde_construction}
Let $Z$ be a cosimplicial pointed space and $n\geq 0$. Assume that $Z$ is objectwise fibrant and denote by $\function{Z}{\capP_0([n])}{\Space_*}$ the composite
\begin{align*}
  \capP_0([n])\rightarrow\Delta^{\leq n}
  \rightarrow\Delta
  \rightarrow\Space_*
\end{align*}
The \emph{associated $\infty$-cartesian $(n+1)$-cube built from $Z$}, denoted $\function{\widetilde{Z}}{\capP([n])}{\Space_*}$, is defined objectwise by
\begin{align*}
  \widetilde{Z}_V :=
  \left\{
    \begin{array}{rl}
    \holim^\BK_{T\neq\emptyset}Z_T,&\text{for $V=\emptyset$,}\\
    Z_V,&\text{for $V\neq\emptyset$}.
    \end{array}
  \right.
\end{align*}
It is important to note (Proposition \ref{prop:punctured_cube_calculation_of_holim_truncated_delta}) that there are natural weak equivalences
\begin{align*}
  \holim\nolimits_{\Delta^{\leq n}}Z\wequiv
  \holim\nolimits^\BK_{T\neq\emptyset}Z_T=\widetilde{Z}_\emptyset
\end{align*}
in $\Space_*$.
\end{defn}

\begin{prop}[Uniformity of faces]
\label{prop:comparing_faces_of_coface_cube_with_codegeneracy_cube}
Let $Z\in(\Space_*)^\Delta$ and $n\geq 0$. Assume that $Z$ is objectwise fibrant. Let $\emptyset\neq T\subset[n]$ and $t\in T$. Then there is a weak equivalence
\begin{align*}
  (\iter\hofib)\partial_{\{t\}}^T\widetilde{Z}\wequiv
  \Omega^{|T|-1}(\iter\hofib)\capY_{|T|-1}
\end{align*}
in $\Space_*$, where $\capY_{|T|-1}$ denotes the codegeneracy $(|T|-1)$-cube associated to $Z$.
\end{prop}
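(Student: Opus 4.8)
The plan is to identify the face $\partial^{T}_{\{t\}}\widetilde{Z}$ explicitly as a coface cube and then to compute its iterated homotopy fiber by trading coface operators for codegeneracy operators, one cube direction at a time, using the cosimplicial identities. First I would observe that since every subset $V$ with $\{t\}\subseteq V\subseteq T$ is nonempty, Definition \ref{defn:the_wide_tilde_construction} gives $\widetilde{Z}_V = Z_V$; hence $\partial^{T}_{\{t\}}\widetilde{Z}$ is the $(|T|-1)$-cube $S\mapsto Z_{\{t\}\cup S}$, indexed by $S\subseteq T\setminus\{t\}$, whose structure maps are precisely the coface maps of $Z$ obtained by inserting the elements of $T\setminus\{t\}$. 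After the order-preserving relabeling $T\cong[\,|T|-1\,]$, this is a cube built entirely from the cosimplicial coface operators, with initial vertex $Z^0$ and terminal vertex $Z^{|T|-1}$.

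The key mechanism is the cosimplicial identity $s^j d^j = \id = s^j d^{j+1}$: each coface operator appearing in the cube is a section of a corresponding codegeneracy operator. Combined with the elementary fact that for a section $f$ of a retraction $r$ (so $rf=\id$) there is a natural weak equivalence $\hofib(f)\wequiv\Omega\,\hofib(r)$, this lets me set up an induction on $|T|$, with base case $\capY_0 = Z^0$. In the inductive step I would peel off one cube direction $u\in T\setminus\{t\}$, writing $(\iter\hofib)\partial^{T}_{\{t\}}\widetilde{Z}$ as the homotopy fiber of the map between the iterated homotopy fibers of the two opposite faces $\partial^{T\setminus\{u\}}_{\{t\}}\widetilde{Z}$ and $\partial^{T}_{\{t,u\}}\widetilde{Z}$; the relevant codegeneracy operator exhibits this map as a section of the map induced by the corresponding direction of the codegeneracy cube, producing one factor of $\Omega$ and reducing $\capY_{|T|-1}$ to $\capY_{|T|-2}$. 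Iterating yields $\Omega^{|T|-1}(\iter\hofib)\capY_{|T|-1}$, which is exactly the Bousfield--Kan layer computation recorded in Proposition \ref{prop:iterated_homotopy_fibers_calculation}, now carried out on the coface cube rather than on the $\Tot$ tower.

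The main obstacle I anticipate is the \emph{uniformity} asserted by the proposition: the answer must be independent of the chosen element $t\in T$, and of $T$ beyond its cardinality, even though distinct choices of $t$ produce genuinely different coface cubes (with different coface operators in each slot). I would address this by tracking, via the simplicial identities, precisely which codegeneracy operator retracts each coface operator, and by verifying that the retraction structure is compatible with the cubical face maps, so that the section/retraction fiber equivalence is natural across the whole cube and the loop shifts assemble coherently into $\Omega^{|T|-1}$. This bookkeeping, rather than any single fiber-sequence computation, is the substance of the argument; once it is in place, the naturality of the equivalence $\hofib(f)\wequiv\Omega\,\hofib(r)$ delivers both the stated weak equivalence and its independence of $t$.
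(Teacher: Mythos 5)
Correct, and essentially the same approach as the paper's proof: the paper establishes this proposition by citation to Ching--Harper (with pointers to Goodwillie [3.4] and Sinha [7.2]), and the argument given in those references is exactly yours --- identify $\partial^{T}_{\{t\}}\widetilde{Z}$ as a cube of coface maps, then induct, trading one coface direction at a time for a codegeneracy direction using the cosimplicial identities $s^j d^j = \id = s^j d^{j+1}$ together with the fact that $\hofib(f)\simeq \Omega\,\hofib(r)$ whenever $rf=\id$. The bookkeeping you flag as the substance (a consistent choice of retracting codegeneracy for each inserted element, so that the retractions assemble into a map of cubes commuting with the remaining structure maps) is precisely what those references carry out, and it does close up, giving both independence of $t\in T$ and the identification of the resulting all-codegeneracy cube with $\capY_{|T|-1}$.
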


\begin{proof}
This is proved in \cite{Ching_Harper_derived_Koszul_duality}; compare \cite[3.4]{Goodwillie_calculus_3} and \cite[7.2]{Sinha_cosimplicial_models}.
\end{proof}

\begin{thm}
\label{thm:cocartesian_and_cartesian_estimates}
Let $Y$ be a $\Kt$-coalgebra space and $n\geq 1$. Consider the $\infty$-cartesian $(n+1)$-cube $\widetilde{\mathfrak{C}(Y)}$ in $\Space_*$ built from $\mathfrak{C}(Y)$. If $Y$ is $(1+r)$-connected, then
\begin{itemize}
\item[(a)] the cube $\widetilde{\mathfrak{C}(Y)}$ is $(2\cdot\id+1)$-cocartesian  and $(2n+5)$-cocartesian in $\Space_*$,
\item[(b)] the cube $\Susp^r\widetilde{\mathfrak{C}(Y)}$ is $(2\cdot\id+1+r)$-cocartesian and $(2n+5+r)$-cocartesian in $\Space_*$,
\item[(c)] the cube $\Susp^r\widetilde{\mathfrak{C}(Y)}$ is $(n+5+r)$-cartesian in $\Space_*$.
\end{itemize}
\end{thm}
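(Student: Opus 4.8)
The plan is to reduce all three parts to one connectivity estimate for the codegeneracy cubes $\capY_m$ of $\mathfrak{C}(Y)$ and then to run Goodwillie's higher (dual) Blakers--Massey machinery \cite[2.5, 2.6]{Goodwillie_calculus_2}. The key preliminary observation is that the codegeneracies of $\mathfrak{C}(Y)$ are built only from the counit $\varepsilon\colon\thinspace\Kt\rarrow F$, and so are independent of the coaction of $Y$; hence the codegeneracy cube $\capY_m$ agrees with the codegeneracy cube of the $\Loopt^r\Susp^r$-resolution of $X:=\Loopt^r Y$, via the reindexing $\Loopt^r\Kt^m Y\wequiv(\Loopt^r\Susp^r)^m\Loopt^r Y$ coming from $\Loop^r(\Susp^r\Loop^r)^m=(\Loop^r\Susp^r)^m\Loop^r$. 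Since $Y$ is $(1+r)$-connected, the space $X=\Loopt^r Y$ is $1$-connected, which is exactly the input for Theorem \ref{thm:higher_freudenthal_suspension}.

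First I would estimate $(\iter\hofib)\capY_m$. Under the reindexing above the codegeneracy cube is assembled from iterations of the Freudenthal suspension map $X\rarrow\Loopt^r\Susp^r X$, so the strong uniform cartesian-ness estimates of Theorem \ref{thm:higher_freudenthal_suspension} (with $k=1$) apply; unwinding the loop-shift of Proposition \ref{prop:iterated_homotopy_fibers_calculation} then yields the desired connectivity of $(\iter\hofib)\capY_m$ and, equivalently, that the associated cubes are $(\id+2)$-cartesian. The point of working through the codegeneracy cubes rather than the cofaces directly is that the cofaces of $\mathfrak{C}(Y)$ involve the coaction of $Y$, which one cannot control by hand; the uniformity of faces (Proposition \ref{prop:comparing_faces_of_coface_cube_with_codegeneracy_cube}) forces the initial faces $\partial_{\{t\}}^T\widetilde{\mathfrak{C}(Y)}$ to have iterated homotopy fibers $\Omega^{|T|-1}(\iter\hofib)\capY_{|T|-1}$, and hence transfers the codegeneracy estimates uniformly onto every face of $\widetilde{\mathfrak{C}(Y)}$ meeting a nonempty vertex.

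With these uniform cartesian-ness estimates for the faces, for part (a) I would invoke the dual higher Blakers--Massey theorem \cite[2.6]{Goodwillie_calculus_2} together with the uniformity correspondence (Proposition \ref{prop:uniformity_correspondence}): the uniform estimate produces the $(2\cdot\id+1)$-cocartesian conclusion, while the sharp top-dimensional value $(2n+5)$ uses that the whole cube $\widetilde{\mathfrak{C}(Y)}$ is $\infty$-cartesian by construction (Definition \ref{defn:the_wide_tilde_construction}), so that in the partition minimization the top term drops out and the genuine bottleneck is improved over the naive uniform value. Part (b) is then immediate, since applying $\Susp^r$ raises the connectivity of a map, and hence the cocartesian-ness of a cube, by $r$: this converts $(2\cdot\id+1)$ and $(2n+5)$ into $(2\cdot\id+1+r)$ and $(2n+5+r)$.

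Finally, for part (c) I would apply the higher Blakers--Massey theorem \cite[2.5]{Goodwillie_calculus_2} to the $(n+1)$-cube $\Susp^r\widetilde{\mathfrak{C}(Y)}$, fed by the cocartesian estimates of (b). The resulting cartesian-ness is the minimum over partitions $\{T_i\}$ of the $(n+1)$-element index set of $-(n+1)+p+\sum_i k_{|T_i|}$; the single block contributes $-(n+1)+1+(2n+5+r)=n+5+r$ via the sharp estimate, whereas any partition into $p\geq 2$ proper blocks contributes $(n+1)+p(2+r)\geq n+5+2r$, which exceeds $n+5+r$ because $r\geq 1$. Thus the cube is $(n+5+r)$-cartesian. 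I expect the main obstacle to be the first step: pinning down the sharp connectivity of the codegeneracy cubes from Theorem \ref{thm:higher_freudenthal_suspension} despite the vertices being only $1$-connected, and in particular gaining the extra two in the top-dimensional cocartesian estimate $(2n+5)$ --- the precise bookkeeping of loop-shifts and partition minima --- since the $(n+5+r)$-cartesian conclusion of (c), and ultimately the whole argument, rests on that sharpness.
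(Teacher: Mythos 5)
Your proposal is correct and follows essentially the same route as the paper: higher Freudenthal suspension applied to $\Loopt^r Y$ (with $k=1$, since $\Loopt^r Y$ is $1$-connected) controls the codegeneracy cubes, Proposition \ref{prop:comparing_faces_of_coface_cube_with_codegeneracy_cube} transfers those estimates uniformly to the faces of $\widetilde{\mathfrak{C}(Y)}$, and then \cite[2.5, 2.6]{Goodwillie_calculus_2} together with Proposition \ref{prop:uniformity_correspondence} yield (a), with (b) by the $r$-fold shift under $\Susp^r$ and (c) by the partition minimization you describe. The only quibble is a minor misstatement of the constant in the higher Blakers--Massey partition formula (the correct single-block term is $1-(n+1)+(2n+5+r)=n+5+r$, which you do obtain), and this does not affect the conclusion.
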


\begin{proof}
Consider part (a) and let $W=[n]$. We use higher dual Blakers-Massey in Goodwillie \cite[2.6]{Goodwillie_calculus_2} to estimate how close the $W$-cube $\widetilde{\mathfrak{C}(Y)}$ and its subcubes in $\Space_*$ are to being cocartesian. We know from higher Freudenthal suspension (Theorem \ref{thm:higher_freudenthal_suspension}) on iterations of the Freudenthal suspension map applied to $\tilde{\Loop}^r Y$, together with the uniformity enforced by Proposition \ref{prop:comparing_faces_of_coface_cube_with_codegeneracy_cube}, that for each nonempty subset $V\subset W$, the $V$-cube $\partial_{W-V}^W\widetilde{\mathfrak{C}(Y)}$ is $(|V|+2)$-cartesian; since it is $\infty$-cartesian by construction when $V=W$, it follows immediately from higher dual Blakers-Massey \cite[2.6]{Goodwillie_calculus_2} that $\widetilde{\mathfrak{C}(Y)}$ is $(2n+5)$-cocartesian. Similarly, it follows that $\widetilde{\mathfrak{C}(Y)}$ is $(\id+2)$-cartesian, and hence by the uniformity correspondence (Proposition \ref{prop:uniformity_correspondence}) we know that $\widetilde{\mathfrak{C}(Y)}$ is $(2\cdot\id+1)$-cocartesian in $\Space_*$ which finishes the proof of part (a). Part (b) is an easy consequence of part (a) since $\function{\Susp^r}{\Space_*}{\Space_*}$ increases cocartesian-ness by $r$. Part (c) follows immediately from higher Blakers-Massey in Goodwillie \cite[2.5]{Goodwillie_calculus_2} together with the cocartesian-ness estimates in part (b).
\end{proof}

\begin{proof}[Proof of Theorem \ref{thm:connectivities_for_map_that_commutes_Z_into_inside_of_holim}]
We want to estimate how connected the comparison map
\begin{align*}
  \Susp^r\holim\nolimits_{\Delta^{\leq n}} \mathfrak{C}(Y)\longrightarrow
  \holim\nolimits_{\Delta^{\leq n}} \Susp^r\,\mathfrak{C}(Y),
\end{align*}
is, which is equivalent to estimating how cartesian $\Susp^r\widetilde{\mathfrak{C}(Y)}$ is; Theorem \ref{thm:cocartesian_and_cartesian_estimates}(c) completes the proof.
\end{proof}

The following proposition gives the connectivity estimates that we need.

\begin{prop}
\label{prop:iterated_hofiber_codegeneracy_cube}
Let $Y$ be a $\Kt$-coalgebra space and $n\geq 1$. Denote by $\capY_n$ the codegeneracy $n$-cube associated to the cosimplicial cobar construction $\mathfrak{C}(Y)$ of $Y$. If $Y$ is $(1+r)$-connected, then the total homotopy fiber of $\capY_n$ is $(2n+1)$-connected.
\end{prop}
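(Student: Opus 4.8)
The plan is to transport the computation away from the codegeneracy cube, where only linear estimates are available by elementary means, and onto a \emph{coface} face of the $\infty$-cartesian cube $\widetilde{\mathfrak{C}(Y)}$, where the higher Freudenthal suspension theorem (Theorem~\ref{thm:higher_freudenthal_suspension}) supplies the essential quadratic improvement. The bridge is the uniformity of faces (Proposition~\ref{prop:comparing_faces_of_coface_cube_with_codegeneracy_cube}), which identifies the iterated homotopy fiber of $\capY_n$, up to a loop-space shift, with that of a coface face of $\widetilde{\mathfrak{C}(Y)}$.

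Concretely, I would build the $\infty$-cartesian $(n+1)$-cube $\widetilde{\mathfrak{C}(Y)}$ from the $n$-truncation of $\mathfrak{C}(Y)$ (Definition~\ref{defn:the_wide_tilde_construction}) and apply Proposition~\ref{prop:comparing_faces_of_coface_cube_with_codegeneracy_cube} with $T=[n]$ and any $t\in[n]$, giving a natural weak equivalence
\[
  (\iter\hofib)\,\partial^{[n]}_{\{t\}}\widetilde{\mathfrak{C}(Y)}\wequiv\Omega^{n}(\iter\hofib)\,\capY_{n}.
\]
This reduces the claim to showing that the $n$-cube $\partial^{[n]}_{\{t\}}\widetilde{\mathfrak{C}(Y)}$ is $(n+2)$-cartesian, hence has $(n+1)$-connected total fiber, so that $\Omega^{n}(\iter\hofib)\capY_n$ is $(n+1)$-connected and therefore $(\iter\hofib)\capY_n$ is $(2n+1)$-connected. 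The required cartesian estimate is exactly of the kind produced in the proof of Theorem~\ref{thm:cocartesian_and_cartesian_estimates}(a): the coface maps of $\mathfrak{C}(Y)=\Cobar(\Loopt^r,\Kt,Y)$ are assembled from the $\Kt$-coaction $\Loopt^r\to\Loopt^r\Kt=\Loopt^r\Susp^r\Loopt^r$ and the comultiplication of $\Kt$, and both of these are built from the Freudenthal suspension map $\id\to\Loopt^r\Susp^r$; since $Y$ is $(1+r)$-connected, $\Loopt^r Y$ is $1$-connected, so the face in question is a cube of iterated Freudenthal suspension maps on a $1$-connected space, and Theorem~\ref{thm:higher_freudenthal_suspension} (with $k=1$) shows every such $d$-dimensional face is $(d+2)$-cartesian. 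As a check, for $n=1$ this recovers by hand that $\hofib(s^0\colon\Loopt^r\Kt Y\to\Loopt^r Y)\wequiv\Omega^r\hofib(\Susp^r\Loopt^r Y\to Y)$ is $3$-connected, using that the Freudenthal map $\Loopt^r Y\to\Loopt^r\Susp^r\Loopt^r Y$ is $3$-connected with $s^0$ as a retraction.

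The main obstacle---and the reason higher Freudenthal must enter---is that the seemingly more direct route is too weak. Stripping the outer $\Loopt^r$, the total fiber of $\capY_n$ is the homotopy conormalization of the cobar construction, i.e.\ an iterated homotopy fiber of the counit maps $\Susp^r\Loopt^r\to\id$; estimating these by a naive map-of-fibration-sequences argument is only linear in $n$ and already fails to reach $(2n+1)$ for $n\geq2$, because it discards the quadratic gain coming from higher excision. All of the content therefore lies in passing to the coface side, where Theorem~\ref{thm:higher_freudenthal_suspension} (a repackaging of the higher Blakers--Massey theorem) delivers that gain uniformly over all subcubes; what remains is the cube-dimension and cartesian-to-connectivity bookkeeping, matching the $\Omega^{n}$-shift of Proposition~\ref{prop:comparing_faces_of_coface_cube_with_codegeneracy_cube} against the $(n+2)$-cartesian estimate to land exactly on $(2n+1)$.
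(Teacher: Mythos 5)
Your proposal is correct and follows essentially the same route as the paper: the paper's proof of Proposition \ref{prop:iterated_hofiber_codegeneracy_cube} simply cites the proof of Theorem \ref{thm:cocartesian_and_cartesian_estimates} --- where the $(|V|+2)$-cartesian estimate for the coface faces is extracted from Theorem \ref{thm:higher_freudenthal_suspension} with $k=1$ applied to the $1$-connected space $\Loopt^r Y$ --- together with Proposition \ref{prop:comparing_faces_of_coface_cube_with_codegeneracy_cube}, which is exactly your argument. The only point you leave tacit (as does the paper) is that passing from the $(n+1)$-connectivity of $\Omega^{n}(\iter\hofib)\capY_{n}$ back to the $(2n+1)$-connectivity of $(\iter\hofib)\capY_{n}$ also needs the vanishing of $\pi_{j}$ of $(\iter\hofib)\capY_{n}$ for $j<n$, which is supplied by the split-retraction structure of the codegeneracy maps.
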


\begin{proof}
This follows immediately from the proof of Theorem \ref{thm:cocartesian_and_cartesian_estimates}, together with Proposition \ref{prop:comparing_faces_of_coface_cube_with_codegeneracy_cube}.
\end{proof}

\begin{proof}[Proof of Theorem \ref{thm:estimating_connectivity_of_maps_in_tower_C_of_Y}]
The homotopy fiber of the map \eqref{eq:tower_map_from_n_th_stage_to_next_lower_stage} is weakly equivalent to $\Loopt^n$ of the total homotopy fiber of the codegeneracy $n$-cube $\capY_{n}$ associated to $\mathfrak{C}(Y)$ by Proposition \ref{prop:iterated_homotopy_fibers_calculation}, hence by Proposition \ref{prop:iterated_hofiber_codegeneracy_cube} the map \eqref{eq:tower_map_from_n_th_stage_to_next_lower_stage} is $(n+2)$-connected.
\end{proof}

\subsection{Strong convergence}

The following is a corollary of the connectivity estimates in Theorem \ref{thm:estimating_connectivity_of_maps_in_tower_C_of_Y}; compare with the homotopy spectral sequence discussion in \cite{Bousfield_Kan, Goerss_Jardine}.

\begin{thm}
If $Y$ is a $(1+r)$-connected $\Kt$-coalgebra space, then the homotopy spectral sequence
\begin{align*}
  E^2_{-s,t} &= \pi^s\pi_t \mathfrak{C}(Y)
  \Longrightarrow
  \pi_{t-s}\holim\nolimits_\Delta \mathfrak{C}(Y)
\end{align*}
converges strongly; compare with \cite{Ching_Harper_derived_Koszul_duality}.
\end{thm}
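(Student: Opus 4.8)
The plan is to deduce strong convergence of the homotopy spectral sequence directly from the connectivity estimates for the $\holim$-tower established in Theorem \ref{thm:estimating_connectivity_of_maps_in_tower_C_of_Y}, using the standard machinery of Bousfield-Kan \cite[IX, X]{Bousfield_Kan}. The homotopy spectral sequence in question is the one associated to the tower of fibrations $\{\holim_{\Delta^{\leq n}}\mathfrak{C}(Y)\}_n$, whose homotopy inverse limit is $\holim_\Delta\mathfrak{C}(Y)$; its $E^2$-term is the cohomotopy of the homotopy groups of the cosimplicial pointed space $\mathfrak{C}(Y)$. The key input is that Theorem \ref{thm:estimating_connectivity_of_maps_in_tower_C_of_Y} tells us the map $\holim_{\Delta^{\leq n}}\mathfrak{C}(Y)\rarrow\holim_{\Delta^{\leq n-1}}\mathfrak{C}(Y)$ is $(n+2)$-connected between $1$-connected objects.

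First I would recall that for a tower of fibrations, strong convergence in the sense of Bousfield-Kan follows once one verifies (i) that the tower satisfies a $\lim^1$-vanishing (Mittag-Leffler) condition, and (ii) that on each total degree the connectivity of the tower maps increases without bound, so that in each fixed total degree $t-s$ only finitely many filtration stages contribute and the spectral sequence is eventually constant. The second condition is exactly what Theorem \ref{thm:estimating_connectivity_of_maps_in_tower_C_of_Y} provides: since the $n$-th tower map is $(n+2)$-connected, for any fixed integer $q$ the induced map on $\pi_q$ is an isomorphism for all $n$ sufficiently large (namely $n+2>q$), and is an epimorphism one stage earlier. This strictly increasing connectivity immediately forces the associated $\lim^1$-terms to vanish and guarantees that the filtration on each homotopy group $\pi_{t-s}\holim_\Delta\mathfrak{C}(Y)$ is finite, which is precisely strong convergence.

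The clean way to organize this is to invoke the complete convergence criterion of Bousfield-Kan \cite[IX.5.3, IX.5.4]{Bousfield_Kan}: a tower whose consecutive maps are eventually highly connected (with connectivity tending to infinity) has a homotopy spectral sequence that converges completely, and in the presence of the monotone increasing connectivity estimates this upgrades to strong convergence. I would therefore state that the connectivity estimate $(n+2)$, being strictly increasing in $n$ and unbounded, verifies the hypotheses of \cite[IX.5.3--5.4]{Bousfield_Kan}, and cite the analogous argument carried out for the integral chains problem in \cite{Blomquist_Harper_integral_chains} and for topological Quillen homology in \cite{Ching_Harper_derived_Koszul_duality}, where the identical mechanism was used.

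The main obstacle, and the only point requiring genuine care, is bookkeeping between the two indexing conventions: the connectivity of the $n$-th tower map grows like $n+2$, whereas convergence in total degree $t-s$ is governed by how this connectivity compares to $t-s$ as the cohomological filtration degree $s$ varies. One must check that, holding the total degree $t-s$ fixed, the relevant connectivities still tend to infinity; this is where the estimate being linear in $n$ (rather than bounded) is essential, and where a careless comparison could fail. Once the indexing is aligned correctly—confirming that for each fixed $t-s$ the contributing stages are finite in number—strong convergence follows formally, and there is nothing further to compute.
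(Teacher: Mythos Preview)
Your proposal is correct and follows essentially the same approach as the paper: the paper's proof is a single sentence stating that strong convergence follows from the connectivity estimates in Theorem~\ref{thm:estimating_connectivity_of_maps_in_tower_C_of_Y}. Your write-up simply unpacks the standard Bousfield--Kan mechanism that the paper leaves implicit.
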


\begin{proof}
This is because of the connectivity estimates in Theorem \ref{thm:estimating_connectivity_of_maps_in_tower_C_of_Y}.
\end{proof}

Compare with Bousfield-Kan \cite{Bousfield_Kan_spectral_sequence} and Bendersky-Curtis-Miller \cite{Bendersky_Curtis_Miller} and Bendersky-Thompson \cite{Bendersky_Thompson}.

\section{Background on simplicial structures}  
\label{sec:simplicial_structures}

The expert may wish to skim through, or skip entirely, this background section; here we recall the well known simplicial structures on pointed spaces that will be exploited in this paper.

\begin{defn}
\label{defn:simplicial_structure_pointed_spaces}
Let $X,X'$ be pointed spaces and $K$ a simplicial set. The \emph{tensor product} $X\tensordot K$ in $\Space_*$, \emph{mapping object} $\hombold_{\Space_*}(K,X)$ in $\Space_*$, and \emph{mapping space} $\Hombold_{\Space_*}(X,X')$ in $\sSet$ are defined by
\begin{align*}
  X\tensordot K &:=X\Smash K_+\\
  \hombold_{\Space_*}(K,X')&:=\hombold_*(K_+,X')\\
  \Hombold_{\Space_*}(X,X')_n &:= \hom_{\Space_*}(X\tensordot\Delta[n],X')
\end{align*}
where the \emph{pointed mapping space} $\hombold_*(X,X')$ in $\Space_*$ is $\Hombold_{\Space_*}(X,X')$ pointed by the constant map.  We will sometimes simply write $\Hombold$ and $\hombold$.
\end{defn}

The following is proved, for instance, in Goerss-Jardine \cite[II.3]{Goerss_Jardine}.

\begin{prop}
With the above definitions of mapping object, tensor product, and mapping space the category of pointed spaces $\Space_*$ is a simplicial model category.
\end{prop}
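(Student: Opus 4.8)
The plan is to verify the three ingredients packaged in the notion of a simplicial model category: that $\Space_*$ is a model category, that the functors $\tensordot$, $\hombold_{\Space_*}$, and $\Hombold_{\Space_*}$ organize $\Space_*$ into a category enriched, tensored, and cotensored over $\sSet$, and that the two structures are compatible through the pushout-product (SM7) axiom. The underlying model structure is the standard Kan--Quillen structure on pointed simplicial sets, with cofibrations the monomorphisms and weak equivalences the weak homotopy equivalences; I would take this as known, since $\Space_*$ is the category of objects under $\Delta[0]$ in $\sSet$ and inherits its model structure from $\sSet$ in the usual way (Goerss-Jardine \cite{Goerss_Jardine}).

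For the enrichment, I would first record the pair of adjunction isomorphisms
\begin{align*}
  \hom_{\Space_*}(X\tensordot K,X')\Iso
  \hom_{\sSet}(K,\Hombold_{\Space_*}(X,X'))\Iso
  \hom_{\Space_*}(X,\hombold_{\Space_*}(K,X')),
\end{align*}
natural in $X$, $X'$, and $K$. These are formal: they follow from the defining identities $X\tensordot K=X\Smash K_+$ and $\hombold_{\Space_*}(K,X')=\hombold_*(K_+,X')$, the smash--hom adjunction for pointed spaces, and the adjunction between $(-)_+$ and the forgetful functor $\Space_*\rarrow\sSet$. The required associativity and unit coherences for $\tensordot$ then descend from the corresponding coherences for $\Smash$ by way of the natural isomorphism $(K\times L)_+\iso K_+\Smash L_+$; this exhibits $\Space_*$ as enriched, tensored, and cotensored over $\sSet$.

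The substantive point is SM7, which by the displayed adjunctions is equivalent to its pushout-product formulation: for a cofibration $\function{i}{K}{L}$ in $\sSet$ and a cofibration $\function{j}{A}{B}$ in $\Space_*$, the corner map
\begin{align*}
  A\tensordot L\ \cup_{A\tensordot K}\ B\tensordot K
  \longrightarrow B\tensordot L
\end{align*}
is a cofibration in $\Space_*$ that is moreover a weak equivalence whenever $i$ or $j$ is. Unwinding $\tensordot$ exhibits this corner map as the smash pushout-product of the pointed cofibration $j$ with $\function{i_+}{K_+}{L_+}$, so the whole statement reduces to the assertion that the Cartesian product makes $\sSet$ a monoidal model category, transported along $(-)_+$. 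The cofibration half is the elementary observation that a pushout-product of monomorphisms of simplicial sets is again a monomorphism.

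The main obstacle is the trivial-cofibration half: proving the corner map is a weak equivalence when $i$ or $j$ is one. For this I would invoke the theory of anodyne extensions (Goerss-Jardine \cite{Goerss_Jardine}): the class of anodyne maps is saturated and is closed under pushout-product against an arbitrary monomorphism, which handles the case in which the distinguished factor is a trivial cofibration. Identifying anodyne maps with trivial cofibrations then yields the general weak-equivalence statement, and transporting along $(-)_+$ delivers SM7 for $\Space_*$, completing the verification.
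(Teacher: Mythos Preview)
Your verification is correct and follows the standard route: establish the enriched/tensored/cotensored structure from the smash--hom adjunction and then check SM7 via the pushout-product, reducing to the anodyne pushout-product theorem in $\sSet$. The paper itself offers no argument here at all; it simply cites Goerss--Jardine \cite[II.3]{Goerss_Jardine} as a reference, so your sketch is strictly more detailed than what the paper provides and is essentially the content of that citation.
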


\begin{rem}
\label{rem:useful_adjunction_isomorphisms_simplicial_structure}
In particular, there are isomorphisms
\begin{align}
\label{eq:tensordot_adjunction_isomorphisms}
  \hom_{\Space_*}(X\tensordot K,X')
  &\Iso\hom_{\Space_*}(X,\hombold(K,X'))\\
\notag
  &\Iso\hom_\sSet(K,\Hombold(X,X'))
\end{align}
in $\Set$, natural in $X,K,X'$, that extend to isomorphisms
\begin{align*}
  \Hombold_{\Space_*}(X\tensordot K,X')
  &\Iso\Hombold_{\Space_*}(X,\hombold(K,X'))\\
  &\Iso\Hombold_\sSet(K,\Hombold(X,X'))
\end{align*}
in $\sSet$, natural in $X,K,X'$.
\end{rem}

\subsection{Simplicial natural transformations}

Recall that the iterated suspension-loop adjunction $(\Susp^r, \Loop^r)$ in \eqref{eq:homology_adjunction} is a Quillen adjunction with left adjoint on top; in particular, for $X,Y\in\Space_*$ there is an isomorphism
\begin{align}
\label{eq:hom_set_adjunction_change_of_operads_general}
  \hom_{\Space_*}(\Susp^r X,Y)\Iso\hom_{\Space_*}(X,\Loop^r Y)
\end{align}
in $\Set$, natural in $X,Y$.

The following proposition can be verified from Goerss-Jardine \cite[II.2.9]{Goerss_Jardine}.

\begin{prop}
\label{prop:useful_properties_of_the_adjunction}
Let $X,Y$ be pointed spaces and $K,L$ simplicial sets.  Then
\begin{itemize}
\item[(a)] there is a natural isomorphism
$
  \sigma\colon\thinspace \Susp^r(X)\tensordot K \xrightarrow{\Iso}\Susp^r(X\tensordot K)
$;
\item[(b)] there is an isomorphism
\begin{align*}
  \Hombold(\Susp^r X,Y)\Iso\Hombold(X,\Loop^r Y)
\end{align*}
in $\sSet$, natural in $X,Y$, that extends the adjunction isomorphism in  \eqref{eq:hom_set_adjunction_change_of_operads_general};
\item[(c)] there is an isomorphism
\begin{align*}
  \Loop^r\hombold(K,Y)\Iso\hombold(K,\Loop^r Y)
\end{align*}
in $\Space_*$, natural in $K,Y$.
\item[(d)] there is a natural map
$
  \function{\sigma}{\Loop^r(Y)\tensordot K}{\Loop^r(Y\tensordot K)}
$
induced by $\Loop^r$.
\item[(e)] the functors $\Susp^r$ and $\Loop^r$ are simplicial functors (Remark \ref{rem:simplicial_functors}) with the structure maps $\sigma$ of (a) and (d), respectively.
\end{itemize}
\end{prop}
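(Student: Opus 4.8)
The plan is to reduce every claim to the underlying symmetric monoidal closed structure on $\Space_*$, namely the smash product $\wedge$ with its internal hom $\hombold_*$, after unwinding the definitions $\Susp^r(X)=S^r\wedge X$, $\Loop^r(Y)=\hombold_*(S^r,Y)$, $X\tensordot K=X\wedge K_+$, and $\hombold(K,X')=\hombold_*(K_+,X')$. Parts (a) and (c) are then purely formal. For (a), both sides unwind to iterated smash products, $(S^r\wedge X)\wedge K_+$ and $S^r\wedge(X\wedge K_+)$, so $\sigma$ is just the associativity isomorphism of $\wedge$, which is natural because the monoidal associator is. For (c), the two sides unwind to $\hombold_*(S^r,\hombold_*(K_+,Y))$ and $\hombold_*(K_+,\hombold_*(S^r,Y))$, and I would produce the isomorphism by twice applying the exponential law $\hombold_*(A\wedge B,Y)\Iso\hombold_*(A,\hombold_*(B,Y))$ together with the symmetry $A\wedge B\Iso B\wedge A$; naturality in $K,Y$ is inherited from the closed structure.

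Next, for (b) I would build the simplicial isomorphism levelwise. By definition $\Hombold(\Susp^r X,Y)_n=\hom_{\Space_*}(\Susp^r X\tensordot\Delta[n],Y)$; applying the isomorphism $\sigma$ of part (a) with $K=\Delta[n]$ and then the hom-set adjunction \eqref{eq:hom_set_adjunction_change_of_operads_general} gives a bijection onto $\hom_{\Space_*}(X\tensordot\Delta[n],\Loop^r Y)=\Hombold(X,\Loop^r Y)_n$. The only thing to check is that these bijections are natural in $[n]\in\Delta$, i.e.\ commute with the structure maps induced by morphisms $\Delta[m]\to\Delta[n]$; this follows from the naturality of $\sigma$ in $K$ and of the hom-set adjunction, so the levelwise bijections assemble into the asserted isomorphism of simplicial sets extending \eqref{eq:hom_set_adjunction_change_of_operads_general}.

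Finally, for (d) and the $\Loop^r$ half of (e) I would invoke doctrinal adjunction. By (a), $\Susp^r$ is a \emph{strong} simplicial functor (its structure maps $\sigma$ are isomorphisms), and $(\Susp^r,\Loop^r)$ is an adjunction; hence the right adjoint $\Loop^r$ inherits a canonical \emph{lax} simplicial structure whose tensor structure map $\function{\sigma}{\Loop^r(Y)\tensordot K}{\Loop^r(Y\tensordot K)}$ is the mate of the $\sigma$ of (a) under the adjunction. Unwinding, this mate is exactly the assembly map of $\hombold_*(S^r,-)$, namely the adjoint of $(\hombold_*(S^r,Y)\wedge K_+)\wedge S^r\Iso(\hombold_*(S^r,Y)\wedge S^r)\wedge K_+\xrightarrow{\ev\wedge\id}Y\wedge K_+$ where $\ev$ is the counit of the $(-\wedge S^r,\hombold_*(S^r,-))$ adjunction, giving (d); and the coherence axioms making $\Loop^r$ a simplicial functor (Remark \ref{rem:simplicial_functors}) hold automatically by the mate correspondence. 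The $\Susp^r$ half of (e) is immediate, since there $\sigma$ is the monoidal associator and the unit and associativity axioms reduce to Mac Lane coherence for $(\Space_*,\wedge)$. The main---and essentially only---obstacle is bookkeeping: organizing the naturality check in (b) and confirming that the abstract mate in (d) agrees with the concrete assembly map; there is no hard homotopical input, everything following from the symmetric monoidal closed structure on $\Space_*$ together with the single hom-set adjunction \eqref{eq:hom_set_adjunction_change_of_operads_general}.
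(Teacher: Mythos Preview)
Your proposal is correct. The paper itself does not give a proof of this proposition; it simply records that the statement ``can be verified from Goerss-Jardine \cite[II.2.9]{Goerss_Jardine},'' which is precisely the passage establishing that the closed symmetric monoidal structure on $\Space_*$ yields the simplicial tensor/cotensor/Hom package, so your explicit unwinding via associativity of $\wedge$, the exponential law for $\hombold_*$, and the mate construction is exactly what that reference encodes.
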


\begin{rem}
\label{rem:simplicial_functors}
See Hirschhorn \cite[9.8.5]{Hirschhorn} for a useful reference on simplicial functors in homotopy theory.
\end{rem}

\begin{prop}
\label{prop:unit_and_counit_are_simplicial}
Consider the monad $\Loop^r\Susp^r$ on pointed spaces $\Space_*$ and the comonad $\Susp^r\Loop^r$ on pointed spaces $\Space_*$ associated to the adjunction $(\Susp^r,\Loop^r)$ in \eqref{eq:homology_adjunction}. The associated natural transformations
\begin{align*}
  \id\xrightarrow{\eta} \Loop^r\Susp^r &\quad\text{(unit)},\quad\quad
  &\id\xleftarrow{\varepsilon}\Susp^r\Loop^r & \quad\text{(counit)}, \\
  \Loop^r\Susp^r\Loop^r\Susp^r\rarrow \Loop^r\Susp^r &\quad\text{(multiplication)},\quad\quad
  &\Susp^r\Loop^r\Susp^r\Loop^r\xleftarrow{m}\Susp^r\Loop^r& \quad\text{(comultiplication)}
\end{align*}
are simplicial natural transformations.
\end{prop}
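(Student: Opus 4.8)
The plan is to recognize Proposition \ref{prop:useful_properties_of_the_adjunction}(b) as the assertion that $(\Susp^r,\Loop^r)$ is a \emph{simplicial} (i.e.\ $\sSet$-enriched) adjunction, and then to deduce that each of the four natural transformations is simplicial. Recall that a natural transformation $\function{\phi}{F}{G}$ between simplicial functors $F,G\colon\Space_*\rightarrow\Space_*$ is a simplicial natural transformation precisely when it is compatible with the tensor structure maps $\sigma$; that is, when for every pointed space $X$ and simplicial set $K$ the diagram
\begin{align*}
\xymatrix{
  FX\tensordot K\ar[r]^-{\sigma}\ar[d]_-{\phi\tensordot\id} & F(X\tensordot K)\ar[d]^-{\phi}\\
  GX\tensordot K\ar[r]_-{\sigma} & G(X\tensordot K)
}
\end{align*}
commutes (equivalently, when $\phi$ is compatible with the induced maps on the mapping spaces $\Hombold$). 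By Proposition \ref{prop:useful_properties_of_the_adjunction}(e) the functors $\Susp^r$ and $\Loop^r$ are simplicial, hence so are the identity functor and all of the composites $\Loop^r\Susp^r$, $\Susp^r\Loop^r$, $\Loop^r\Susp^r\Loop^r\Susp^r$, and $\Susp^r\Loop^r\Susp^r\Loop^r$, with structure maps obtained by composing the maps $\sigma$ of Proposition \ref{prop:useful_properties_of_the_adjunction}(a),(d); so the statement makes sense and it remains to verify the compatibility square for each of $\eta$, $\varepsilon$, the multiplication, and the comultiplication.

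First I would reduce the four cases to the two cases of the unit $\eta$ and counit $\varepsilon$. The multiplication $\Loop^r\Susp^r\Loop^r\Susp^r\rightarrow\Loop^r\Susp^r$ is the whiskering $\Loop^r\varepsilon\Susp^r$ of the counit, and the comultiplication $\Susp^r\Loop^r\rightarrow\Susp^r\Loop^r\Susp^r\Loop^r$ is the whiskering $\Susp^r\eta\Loop^r$ of the unit. Whiskering a simplicial natural transformation on either side by a simplicial functor yields again a simplicial natural transformation: the relevant compatibility square for the whiskered transformation is assembled by pasting the compatibility square for $\phi$ together with the (strictly commuting) naturality squares for the structure maps $\sigma$ of the outer functors. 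Thus it suffices to prove that $\eta$ and $\varepsilon$ are simplicial.

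Finally I would prove $\eta$ and $\varepsilon$ are simplicial using the enriched adjunction. Proposition \ref{prop:useful_properties_of_the_adjunction}(b) gives an isomorphism $\Hombold(\Susp^r X,Y)\Iso\Hombold(X,\Loop^r Y)$ in $\sSet$, natural in $X$ and $Y$, extending the hom-set adjunction of \eqref{eq:hom_set_adjunction_change_of_operads_general}; this is exactly the data making $(\Susp^r,\Loop^r)$ a simplicial adjunction. The unit $\eta_X$ is the adjunct of $\id_{\Susp^r X}$ and the counit $\varepsilon_Y$ is the adjunct of $\id_{\Loop^r Y}$ under this isomorphism, and because the isomorphism is an isomorphism of mapping \emph{spaces} natural in both variables, the enriched naturality (equivalently, the $\sigma$-compatibility) of $\eta$ and $\varepsilon$ follows by the same diagram chase that establishes these identities at the level of hom-sets, now carried out in each simplicial degree; this is the standard fact that the unit and counit of a simplicial adjunction are simplicial natural transformations (see Hirschhorn \cite{Hirschhorn}). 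Concretely, one may instead verify the tensor-compatibility square for $\eta$ directly: using that $\sigma$ for $\Susp^r$ is an isomorphism (Proposition \ref{prop:useful_properties_of_the_adjunction}(a)) and the definition of $\sigma$ for $\Loop^r$ (Proposition \ref{prop:useful_properties_of_the_adjunction}(d)), the composite structure map of $\Loop^r\Susp^r$ is pinned down and the identity $\sigma\circ(\eta\tensordot\id)=\eta$ unwinds to the naturality of $\eta$ together with the triangle identities.

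The main obstacle I anticipate is purely bookkeeping rather than conceptual: the structure map $\sigma$ for $\Loop^r$ in Proposition \ref{prop:useful_properties_of_the_adjunction}(d) is only a map (not an isomorphism, in contrast to the one for $\Susp^r$), so one must keep careful track of the directions of the structure arrows when composing them for $\Loop^r\Susp^r$ and when pasting squares in the whiskering step. The cleanest way to avoid errors is to work throughout with the enriched mapping-space formulation supplied by Proposition \ref{prop:useful_properties_of_the_adjunction}(b), where the adjunction isomorphism is genuinely invertible and the whole statement reduces to its naturality in both variables.
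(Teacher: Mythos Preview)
Your proposal is correct and follows the standard approach: reduce to the unit and counit via whiskering, then invoke the simplicial adjunction isomorphism of Proposition \ref{prop:useful_properties_of_the_adjunction}(b). The paper itself gives no explicit argument here, only the one-line reference ``This is proved similar to \cite[Proof of 3.16]{Ching_Harper_derived_Koszul_duality}''; your write-up is essentially what that cited argument amounts to.
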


\begin{proof}
This is proved similar to \cite[Proof of 3.16]{Ching_Harper_derived_Koszul_duality}.
\end{proof}

\section{Background on the homotopy theory of $\Kt$-coalgebras}
\label{sec:homotopy_theory_K_coalgebras}

The purpose of this section is to recall the homotopy theory of $\Kt$-coalgebras developed in Arone-Ching \cite{Arone_Ching_classification}; we follow the slightly modified formulations exploited in Ching-Harper \cite{Ching_Harper} and Cohn \cite{Cohn}. The expert may wish to skim through, or skip entirely, this background section.

A morphism of $\Kt$-coalgebra spaces from $Y$ to $Y'$ is a map $\function{f}{Y}{Y'}$ in $\Space_*$ that makes the diagram
\begin{align}
\label{eq:commutative_square_defining_K_coalgebra_map}
\xymatrix{
  Y\ar[d]_-{f}\ar[r]^-{m} & \Kt Y\ar[d]^-{\id f}\\
  Y'\ar[r]_-{m} & \Kt Y'
}
\end{align}
in $\Space_*$ commute. This motivates the following homotopical cosimplicial resolution of $\Kt$-coalgebra maps from $Y$ to $Y'$.

\begin{defn}
\label{defn:derived_K_coalgebra_maps}
Let $Y,Y'$ be $\Kt$-coalgebra spaces. The object $\Hombold\bigl(Y,F\Kt^\bullet Y'\bigr)$ in $(\sSet)^\Delta$ looks like (showing only the coface maps)
\begin{align*}
\xymatrix{
  \Hombold(Y,FY')\ar@<0.5ex>[r]^-{d^0}\ar@<-0.5ex>[r]_-{d^1} &
  \Hombold\bigl(Y,F\Kt Y'\bigr)
  \ar@<1.0ex>[r]\ar[r]\ar@<-1.0ex>[r] &
  \Hombold\bigl(Y,F\Kt\Kt Y'\bigr)\cdots
}
\end{align*}
and is defined objectwise by 
\begin{align*}
  \Hombold\bigl(Y,F\Kt^\bullet Y'\bigr)^n:=
  \Hombold\bigl(Y,F\Kt^n Y'\bigr)=
  \Hombold\bigl(Y,F(\K F)^n Y'\bigr)
\end{align*}
 with the obvious coface and codegeneracy maps; see, Arone-Ching \cite[1.3]{Arone_Ching_classification}.
\end{defn}

\begin{defn}
\label{defn:realization_sSet}
The \emph{realization} functor $\function{|-|}{\sSet}{\CGHaus}$ for simplicial sets is defined objectwise by the coend $X\mapsto X \times_{\Delta}\Delta^{(-)}$; here, $\Delta^n$ in $\CGHaus$ denotes the topological standard $n$-simplex for each $n\geq 0$ (see \cite[I.1.1]{Goerss_Jardine}).
\end{defn}

\begin{defn}
Let $X,Y$ be pointed spaces. The mapping space functor $\Map_{\Space_*}$ is defined objectwise by realization
$
  \Map_{\Space_*}(X,Y)
  :=|\Hombold_{\Space_*}(X,Y)|
$
of the indicated simplicial set.
\end{defn}

\begin{defn}
Let $Y,Y'$ be $\Kt$-coalgebra spaces. The \emph{mapping spaces} of derived $\Kt$-coalgebra maps $\Hombold_{\coAlgKt}(Y,Y')$ in $\sSet$ and $\Map_{\coAlgKt}(Y,Y')$ in $\CGHaus$ are defined by the restricted totalizations
\begin{align*}
  \Hombold_{\coAlgKt}(Y,Y')
  &:=\Tot^\res\Hombold\bigl(Y,F\Kt^\bullet Y'\bigr)\\
  \Map_{\coAlgKt}(Y,Y')
  &:=\Tot^\res\Map\bigl(Y,F\Kt^\bullet Y'\bigr)
\end{align*}
of the indicated cosimplicial objects.
\end{defn}

\begin{rem}
\label{rem:intuition_behind_resolution}
Note that there are natural zigzags of weak equivalences
\begin{align*}
  \Hombold_{\coAlgKt}(Y,Y')
  \wequiv\holim_{\Delta}\Hombold\bigl(Y,F\Kt^\bullet Y'\bigr)
\end{align*}
\end{rem}

\begin{defn}
\label{defn:derived_K_coalgebra_map}
Let $Y,Y'$ be $\Kt$-coalgebra spaces. A \emph{derived $\Kt$-coalgebra map} $f$ of the form $Y\rarrow Y'$ is any map in $(\sSet)^{\Delta_\res}$ of the form
\begin{align*}
  \functionlong{f}{\Delta[-]}{\Hombold\bigl(Y,F\Kt^\bullet Y'\bigr)}.
\end{align*}
A \emph{topological derived $\Kt$-coalgebra map} $g$ of the form $Y\rarrow Y'$ is any map in $(\CGHaus)^{\Delta_\res}$ of the form
\begin{align*}
  \functionlong{g}{\Delta^\bullet}{\Map\bigl(Y,F\Kt^\bullet Y'\bigr)}.
\end{align*}
The \emph{underlying map} of a derived $\Kt$-coalgebra map $f$ is the map $\function{f_0}{Y}{FY'}$ that corresponds to the map $\function{f_0}{\Delta[0]}{\Hombold(Y,FY')}$. Every derived $\Kt$-coalgebra map $f$ determines a topological derived $\Kt$-coalgebra map $|f|$ by realization.
\end{defn}

\begin{defn}
The \emph{homotopy category} of $\Kt$-coalgebra spaces (compare, \cite[1.15]{Arone_Ching_classification}), denoted $\Ho(\coAlgKt)$, is the category with objects the $\Kt$-coalgebras and morphism sets $[X,Y]_\Kt$ from $X$ to $Y$ the path components
\begin{align*}
  [X,Y]_\Kt := \pi_0\Map_\coAlgKt(X,Y)
\end{align*}
of the indicated mapping spaces.
\end{defn}

\begin{defn}
\label{defn:weak_equivalence_of_K_coalgebras}
A derived $\Kt$-coalgebra map $f$ of the form $Y\rarrow Y'$ is a \emph{weak equivalence} if the underlying map $\function{f_0}{Y}{FY'}$ is a weak equivalence.
\end{defn}

\begin{prop}
\label{prop:weak_equivalence_if_and_only_if_iso_in_homotopy_category}
Let $Y,Y'$ be $\Kt$-coalgebra spaces. A derived $\Kt$-coalgebra map $f$ of the form $Y\rarrow Y'$ is a weak equivalence if and only if it represents an isomorphism in the homotopy category of $\Kt$-coalgebras.
\end{prop}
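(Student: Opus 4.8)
The plan is to deduce both implications from a single structural input—the homotopy invariance of the derived mapping spaces $\Map_{\coAlgKt}(-,-)$ in each variable—combined with a Yoneda argument in $\Ho(\coAlgKt)$ for the forward direction and a forgetful functor for the reverse. The first step, which I expect to carry the technical weight, is to show that if $\function{g}{Z}{Z'}$ is a derived $\Kt$-coalgebra map whose underlying map $\function{g_0}{Z}{FZ'}$ is a weak equivalence, then postcomposition
\begin{align*}
  \function{g_*}{\Map_{\coAlgKt}(W,Z)}{\Map_{\coAlgKt}(W,Z')}
\end{align*}
is a weak equivalence for every $\Kt$-coalgebra space $W$. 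To see this I would argue levelwise on the cosimplicial resolutions: the induced map $\Hombold(W,F\Kt^\bullet Z)\rarrow\Hombold(W,F\Kt^\bullet Z')$ is, in each degree $n$, given up to coherent homotopy by applying $\Hombold(W,F\Kt^n(-))$ to the underlying weak equivalence $g_0$ (the coherence being part of the Arone--Ching structure). Since $\Susp^r$ preserves weak equivalences while $\Loop^r$, $F$, and $\hombold(W,-)$ preserve weak equivalences between fibrant objects, and $F$ supplies the requisite fibrancy, each functor $F\Kt^n$ and $\Hombold(W,F\Kt^n(-))$ carries $g_0$ to a weak equivalence. Hence the cosimplicial map is a levelwise weak equivalence of objectwise fibrant diagrams, so it induces a weak equivalence on $\holim_\Delta\wequiv\Tot^\res$ (Remark \ref{rem:intuition_behind_resolution}) and therefore on realizations.

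Granting this, the forward implication $(\Rightarrow)$ proceeds as follows. If $f_0$ is a weak equivalence, then taking $g=f$ above shows that $f$ induces a weak equivalence $\Map_{\coAlgKt}(W,Y)\xrightarrow{\wequiv}\Map_{\coAlgKt}(W,Y')$, and hence a bijection $[W,Y]_\Kt\iso[W,Y']_\Kt$, natural in $W$. A Yoneda argument in $\Ho(\coAlgKt)$ then produces a two-sided inverse to $[f]$: feeding $W=Y'$ into the natural bijection $[W,Y]_\Kt\iso[W,Y']_\Kt$ yields a class $[h]\in[Y',Y]_\Kt$ mapping to $[\id_{Y'}]$, and naturality forces $[h]$ to be inverse to $[f]$ on both sides, so $[f]$ is an isomorphism in $\Ho(\coAlgKt)$.

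For the reverse implication $(\Leftarrow)$, the key observation is that the underlying-map construction $Y\mapsto FY$, $[f]\mapsto[f_0]$ descends to a functor $\Ho(\coAlgKt)\rarrow\Ho(\Space_*)$; concretely, projection of the totalization tower onto its $0$-th term furnishes a natural map $[Y,Y']_\Kt\rarrow[Y,FY']$ onto homotopy classes of underlying maps, compatible with composition up to the coherence equivalences $F\wequiv\id$ coming from the simplicial structure of Section \ref{sec:simplicial_structures}. Since any functor carries isomorphisms to isomorphisms, an isomorphism $[f]$ in $\Ho(\coAlgKt)$ maps to an isomorphism $[f_0]$ in $\Ho(\Space_*)$; that is, $f_0$ is a weak equivalence, so $f$ is a weak equivalence by Definition \ref{defn:weak_equivalence_of_K_coalgebras}. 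The main obstacle is the first step—verifying that $f$ induces a genuinely levelwise weak equivalence of the cosimplicial mapping objects and that $\Tot^\res$ preserves such levelwise weak equivalences between objectwise fibrant diagrams; the remaining bookkeeping (naturality of the Yoneda bijection and compatibility of the underlying-map projection with composition) is routine given the simpliciality established in Propositions \ref{prop:useful_properties_of_the_adjunction} and \ref{prop:unit_and_counit_are_simplicial}.
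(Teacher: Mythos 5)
The paper never proves this proposition: it is stated as background, recalled from the Arone--Ching/Ching--Harper/Cohn framework (note that the paper's definition of $\Ho(\coAlgKt)$ specifies only objects and hom-sets $[X,Y]_\Kt$, and defers even the composition law to Arone--Ching). So your attempt can only be judged as a standalone proof, and while its architecture --- (i) postcomposition with a derived map whose underlying map is a weak equivalence induces an equivalence of derived mapping spaces, (ii) Yoneda to extract a two-sided inverse, (iii) an underlying-map functor $\Ho(\coAlgKt)\rarrow\Ho(\Space_*)$ for the converse --- is the right shape, the step you yourself flag as carrying the technical weight has a genuine gap.

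The gap is this: a derived $\Kt$-coalgebra map $g$ from $Z$ to $Z'$ is a map $\Delta[-]\rarrow\Hombold\bigl(Z,F\Kt^\bullet Z'\bigr)$ in $(\sSet)^{\Delta_\res}$, not a map of spaces $Z\rarrow Z'$, so it does not induce any strict levelwise map $\Hombold\bigl(W,F\Kt^\bullet Z\bigr)\rarrow\Hombold\bigl(W,F\Kt^\bullet Z'\bigr)$ of cosimplicial objects. Your phrase ``given up to coherent homotopy by applying $\Hombold(W,F\Kt^n(-))$ to $g_0$'' is therefore not something you can feed into the argument that levelwise weak equivalences of objectwise fibrant cosimplicial diagrams induce equivalences on $\Tot^\res$: that argument needs an actual map (or zigzag of maps) of cosimplicial diagrams. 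Had $g$ been a \emph{strict} $\Kt$-coalgebra map your levelwise argument would be fine, but the entire difficulty of the proposition is that derived maps are not strict. Postcomposition with $g$ must instead be built from the composition pairing on restricted totalizations (the box-product/comonad machinery of Arone--Ching, which also uses the monad multiplication $m\colon FF\rarrow F$ to collapse adjacent copies of $F$), and once built, its contribution in cosimplicial degree $n$ involves \emph{all} of the components $g_0,g_1,\dots,g_n$, not just $g_0$. What is true, and what a correct proof exploits, is that this pairing preserves the $\Tot^\res$-tower filtration, and that on each layer of the tower the induced map agrees, modulo terms coming from lower filtration, with postcomposition by the weak equivalence obtained from $F\Kt^n(g_0)$; one then inducts up the tower (with the usual $\lim^1$ care) rather than arguing levelwise. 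Finally, note that both of your directions quietly presuppose the categorical structure of $\Ho(\coAlgKt)$ --- associative, unital composition compatible on underlying maps with composition in $\Ho(\Space_*)$ --- which is exactly the part of the cited framework the paper does not spell out; a self-contained proof would have to construct this pairing first, and your proposal assumes it.
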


\section{Background on the derived fundamental adjunction}
\label{sec:derived_fundamental_adjunction}

The purpose of this section is to review the derived fundamental adjunction associated to the iterated suspension-loop adjunction; the expert may wish to skim through, or skip entirely, this background section.

The derived unit is the map of pointed spaces of the form $X\rarrow\holim_\Delta \mathfrak{C}(\Susp^r X)$ corresponding to the identity map $\function{\id}{\Susp^r X}{\Susp^r X}$; it is tautologically the $\Loopt^r\Susp^r$-completion map $X\rarrow X^\wedge_{\Loopt^r\Susp^r}$ studied in Bousfield \cite{Bousfield_cosimplicial_space}. 

\begin{defn}
\label{defn:derived_counit_map}
The \emph{derived counit map} associated to the fundamental adjunction \eqref{eq:fundamental_adjunction_comparing_spaces_with_coAlgK} is the derived $\Kt$-coalgebra map of the form $\Susp^r\holim_\Delta \mathfrak{C}(Y)\rightarrow Y$ with underlying map
\begin{align}
\label{eq:derived_counit_map_of_the_form}
  \Susp^r\Tot^\res\mathfrak{C}(Y)\longrightarrow FY
\end{align}
corresponding to the identity map
\begin{align}
\label{eq:derived_identity_map}
  \function{\id}{\Tot^\res\mathfrak{C}(Y)}{\Tot^\res\mathfrak{C}(Y)}
\end{align}
in $\Space_*$, via the adjunctions \cite[5.4]{Blomquist_Harper_integral_chains} and \eqref{eq:tensordot_adjunction_isomorphisms}. In more detail, the derived counit map is the derived $\Kt$-coalgebra map defined by the composite
\begin{align}
\label{eq:derived_counit_map}
  \Delta[-]\xrightarrow{(*)}
  &\Hombold_{\Space_*}\bigl(\Tot^\res\mathfrak{C}(Y),\mathfrak{C}(Y)\bigr)\\
  \notag
  \Iso
  &\Hombold_{\Space_*}\bigl(\Susp^r\Tot^\res\mathfrak{C}(Y),F\Kt^\bullet Y\bigr)
\end{align}
in $(\sSet)^{\Delta_\res}$, where $(*)$ corresponds to the map \eqref{eq:derived_identity_map}, via the adjunctions \cite[5.4]{Blomquist_Harper_integral_chains} and \eqref{eq:tensordot_adjunction_isomorphisms}.
\end{defn}

\begin{prop}
\label{prop:induced_map_on_mapping_spaces_built_from_Q}
Let $X,X'$ be pointed spaces.
There are natural morphisms of mapping spaces of the form
\begin{align*}
  \Susp^r\colon\thinspace\Hombold_{\Space_*}(X,X')
  \rarrow&\Hombold_\coAlgKt(\Susp^r X,\Susp^r X'),\\
  \Susp^r\colon\thinspace\Map_{\Space_*}(X,X')
  \rarrow&\Map_\coAlgKt(\Susp^r X,\Susp^r X'),
\end{align*}
in $\sSet$ and $\CGHaus$, respectively.
\end{prop}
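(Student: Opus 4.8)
The plan is to build the map by combining the simplicial functoriality of $\Susp^r$ with a coaugmentation of the cobar resolution that defines $\Hombold_\coAlgKt$. Since $\Susp^r$ is a simplicial functor (Proposition \ref{prop:useful_properties_of_the_adjunction}(e)), it induces a natural map of simplicial sets
\[
  \Susp^r\colon\thinspace\Hombold_{\Space_*}(X,X')\rarrow
  \Hombold_{\Space_*}(\Susp^r X,\Susp^r X').
\]
Postcomposing with the unit $\function{\eta}{\Susp^r X'}{F\Susp^r X'}$ of the fibrant replacement monad lands this in cosimplicial degree $0$ of the object $\Hombold(\Susp^r X,F\Kt^\bullet\Susp^r X')$ of Definition \ref{defn:derived_K_coalgebra_maps}, producing a natural map
\[
  \function{\phi^0}{\Hombold_{\Space_*}(X,X')}{\Hombold(\Susp^r X,F\Susp^r X')};
\]
on vertices, $\phi^0$ records the passage from a map $f$ to the underlying map $\eta\circ\Susp^r(f)$ of a derived $\Kt$-coalgebra map.

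The first thing I would check is that $\phi^0$ equalizes the two coface maps $d^0,d^1$ into cosimplicial degree $1$. This equalizer identity is precisely the assertion that each $\Susp^r(f)$ is compatible with the $\Kt$-coaction in the sense of the square \eqref{eq:commutative_square_defining_K_coalgebra_map}, and it holds because $\Susp^r X$ and $\Susp^r X'$ carry strict $\K$-coalgebra structures and $\Susp^r(f)$ is a strict morphism of $\K$-coalgebras, so the relevant diagram commutes on the nose. The only subtlety is that the identity must hold as a map of simplicial sets and not merely on vertices; this is where I would invoke Proposition \ref{prop:unit_and_counit_are_simplicial} together with Proposition \ref{prop:useful_properties_of_the_adjunction}, so that the coaction, comultiplication, and counit are simplicial natural transformations compatible with the structure maps $\sigma$ of $\Susp^r$.

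Granting the equalizer identity, $\phi^0$ extends uniquely to a map of cosimplicial simplicial sets out of the constant object,
\[
  c\bigl(\Hombold_{\Space_*}(X,X')\bigr)\rarrow
  \Hombold(\Susp^r X,F\Kt^\bullet\Susp^r X'),
\]
since the higher coface and codegeneracy coherences follow formally from the cosimplicial identities once the two degree-$0$ cofaces agree. Applying the restricted totalization $\Tot^\res$ and precomposing with the canonical map from the constant object then yields the first claimed morphism into $\Tot^\res\Hombold(\Susp^r X,F\Kt^\bullet\Susp^r X')=\Hombold_\coAlgKt(\Susp^r X,\Susp^r X')$, with naturality in $X,X'$ immediate from the construction (compare Remark \ref{rem:intuition_behind_resolution}). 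For the topological version I would simply apply the geometric realization functor $|-|$ objectwise to the cosimplicial map above, obtaining a cosimplicial map $c\bigl(\Map_{\Space_*}(X,X')\bigr)\rarrow\Map(\Susp^r X,F\Kt^\bullet\Susp^r X')$, and then take $\Tot^\res$; no interchange of realization with totalization is needed, since both mapping spaces are defined by realizing levelwise and then totalizing.

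I expect the main obstacle to be the bookkeeping in the equalizer verification, namely confirming that $\phi^0$ is a map of cosimplicial objects rather than just a levelwise map; this is the step where the strictness of the $\K$-coalgebra structure on $\Susp^r(-)$ and the simplicial naturality of the comonad structure maps (Propositions \ref{prop:useful_properties_of_the_adjunction} and \ref{prop:unit_and_counit_are_simplicial}) do the real work. Everything else is formal. The argument runs parallel to the proof of the analogous statement in Ching-Harper \cite{Ching_Harper_derived_Koszul_duality}.
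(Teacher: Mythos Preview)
The paper states this proposition without proof, treating it as routine background; your construction is correct and is the natural way to produce these maps. Your argument amounts to observing that the cosimplicial object $\Hombold(\Susp^r X,F\Kt^\bullet\Susp^r X')$ is coaugmented by $\Hombold(\Susp^r X,\Susp^r X')$ (because $\Susp^r X'$ is a $\Kt$-coalgebra), and precomposing with the simplicial functor map $\Hombold(X,X')\to\Hombold(\Susp^r X,\Susp^r X')$ then gives the desired morphism into $\Tot^\res$. Your verification that equalizing $d^0,d^1$ at level~$0$ suffices to extend to a map out of the constant cosimplicial object is correct and is exactly the standard fact that a coaugmentation of a cosimplicial object is the same datum as such a map; the induction on cosimplicial identities you would carry out goes through without incident. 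The appeal to Propositions~\ref{prop:useful_properties_of_the_adjunction} and~\ref{prop:unit_and_counit_are_simplicial} for simplicial naturality is appropriate and is indeed where the work lies, as you note.
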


\begin{prop}
There is an induced functor
\begin{align*}
  \function{\Susp^r}{\Ho(\Space_*)}{\Ho(\coAlgKt)}
\end{align*}
which on objects is the map $X\mapsto \Susp^r X$ and on morphisms is the map
\begin{align*}
  [X,X']\rarrow [\Susp^r X,\Susp^r X']_\Kt
\end{align*}
which sends $[f]$ to $[\Susp^r f]$.
\end{prop}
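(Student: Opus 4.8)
The plan is to derive the functor formally from the morphism of mapping spaces already constructed in Proposition \ref{prop:induced_map_on_mapping_spaces_built_from_Q}. On objects I would send a pointed space $X$ to the iterated suspension $\Susp^r X$ equipped with its natural left $\Kt$-coaction (the remark following Definition \ref{defn:fibrant_replacement}), so that $\Susp^r X$ is a $\Kt$-coalgebra space. On morphisms I would apply $\pi_0$ to the natural map
\begin{align*}
  \Susp^r\colon\thinspace\Map_{\Space_*}(X,X')\rarrow\Map_{\coAlgKt}(\Susp^r X,\Susp^r X')
\end{align*}
of Proposition \ref{prop:induced_map_on_mapping_spaces_built_from_Q}. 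Since $[\Susp^r X,\Susp^r X']_\Kt=\pi_0\Map_{\coAlgKt}(\Susp^r X,\Susp^r X')$ by definition, and $\pi_0\Map_{\Space_*}(X,X')$ computes $[X,X']$ in $\Ho(\Space_*)$ once $X'$ is fibrant, this yields the asserted map of hom-sets; unwinding the construction shows it carries the class $[f]$ of an honest map to the class $[\Susp^r f]$ of the strict, hence derived, $\Kt$-coalgebra map $\Susp^r f$. The fibrancy hypothesis on $X'$ costs nothing, since $\Susp^r$ preserves weak equivalences, so that $\Susp^r X'\rarrow\Susp^r(FX')$ is an isomorphism in $\Ho(\coAlgKt)$ by Proposition \ref{prop:weak_equivalence_if_and_only_if_iso_in_homotopy_category}.

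That the assignment on morphisms is well defined on homotopy classes is automatic, as it is $\pi_0$ of a map of spaces; concretely, a simplicial homotopy $f\simeq g$ is a map $X\tensordot\Delta[1]\rarrow X'$, and applying $\Susp^r$ together with the natural isomorphism $\Susp^r(X\tensordot\Delta[1])\Iso\Susp^r(X)\tensordot\Delta[1]$ of Proposition \ref{prop:useful_properties_of_the_adjunction}(a) produces a homotopy $\Susp^r f\simeq\Susp^r g$ through $\Kt$-coalgebra maps, so that $[\Susp^r f]=[\Susp^r g]$.

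It then remains to check functoriality. On the spaces level $\Susp^r$ is a strict functor, so $\Susp^r(\id_X)=\id_{\Susp^r X}$ and $\Susp^r(g\circ f)=\Susp^r g\circ\Susp^r f$; the content is that these strict identities descend compatibly with composition in $\Ho(\coAlgKt)$. I expect this to be the main obstacle, because composition in $\Ho(\coAlgKt)$ is defined through the restricted totalizations $\Map_{\coAlgKt}(Y,Y')=\Tot^\res\Map(Y,F\Kt^\bullet Y')$ following Arone-Ching \cite{Arone_Ching_classification}, rather than by naive composition of underlying maps. The way I would resolve it is to observe that the maps $\Susp^r$ of Proposition \ref{prop:induced_map_on_mapping_spaces_built_from_Q} are compatible with the composition pairings on both sides---that is, that $\Susp^r$ is a simplicially enriched functor from $\Space_*$ to $\coAlgKt$---which is inherited from the simplicial naturality of the unit, counit, and comultiplication recorded in Proposition \ref{prop:unit_and_counit_are_simplicial} together with Proposition \ref{prop:useful_properties_of_the_adjunction}(e); passing to $\pi_0$ then yields a genuine functor.
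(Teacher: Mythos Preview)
The paper states this proposition without proof; it is placed immediately after Proposition~\ref{prop:induced_map_on_mapping_spaces_built_from_Q} and is evidently meant to follow from it by applying $\pi_0$. Your proposal does exactly this and is correct. Your additional care about functoriality---observing that composition in $\Ho(\coAlgKt)$ is defined through the Arone--Ching totalizations rather than naively, and resolving this via the simplicial enrichment coming from Propositions~\ref{prop:useful_properties_of_the_adjunction} and~\ref{prop:unit_and_counit_are_simplicial}---fills in a point the paper leaves entirely implicit, and is the right way to do it.
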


\begin{prop}
\label{prop:cosimplicial_resolutions_of_K_coalgebras_respect_adjunction_isos}
Let $X\in\Space_*$ and $Y\in\coAlgKt$. The adjunction isomorphisms associated to the $(\Susp^r,\Loop^r)$ adjunction induce well-defined isomorphisms
\begin{align*}
  \Hombold\bigl(\Susp^r X,F\Kt^\bullet Y\bigr)&\xrightarrow{\Iso}
  \Hombold\bigl(X,\Loopt^r\Kt^\bullet Y\bigr)
\end{align*}
of cosimplicial objects in $\sSet$, natural in $X,Y$.
\end{prop}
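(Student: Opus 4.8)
The plan is to produce the isomorphism one cosimplicial degree at a time from the simplicial adjunction isomorphism of Proposition~\ref{prop:useful_properties_of_the_adjunction}(b), and then to check compatibility with the cosimplicial structure maps. The key numerical observation is that $\Loopt^r=\Loop^r F$, so that in degree $n$ we have $\Loopt^r\Kt^n Y=\Loop^r\bigl(F\Kt^n Y\bigr)$; applying Proposition~\ref{prop:useful_properties_of_the_adjunction}(b) with target $Z:=F\Kt^n Y$ therefore yields a natural isomorphism
\begin{align*}
  \Hombold\bigl(\Susp^r X,F\Kt^n Y\bigr)\xrightarrow{\Iso}
  \Hombold\bigl(X,\Loop^r F\Kt^n Y\bigr)=\Hombold\bigl(X,\Loopt^r\Kt^n Y\bigr)
\end{align*}
in $\sSet$, natural in $X$ and in the $\Kt$-coalgebra space $Y$. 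Concretely this isomorphism sends a map $g$ to $\Loop^r(g)\circ\eta$, where $\eta$ is the unit of the $(\Susp^r,\Loop^r)$ adjunction. This defines the desired comparison degreewise; naturality in $X,Y$ is inherited from Proposition~\ref{prop:useful_properties_of_the_adjunction}(b) together with naturality of $F\Kt^n(-)$.

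First I would record precisely which structure maps must be matched. On the right-hand side, $\Hombold(X,\Loopt^r\Kt^\bullet Y)=\Hombold(X,\mathfrak{C}(Y))$ is simply $\Hombold(X,-)$ applied to the cosimplicial cobar construction $\mathfrak{C}(Y)=\Cobar(\Loopt^r,\Kt,Y)$ of Definition~\ref{defn:cobar_construction_fattened}, so every coface and codegeneracy there is post-composition with a structure map of $\mathfrak{C}(Y)$; in particular $d^0$ is post-composition with the right $\Kt$-coaction $\function{m}{\Loopt^r}{\Loopt^r\Kt}$. On the left-hand side, the cosimplicial object $\Hombold(\Susp^r X,F\Kt^\bullet Y)$ of Definition~\ref{defn:derived_K_coalgebra_maps} (following Arone-Ching \cite{Arone_Ching_classification}) has its inner cofaces $d^1,\dots,d^n$ (built from the comultiplication $\function{m}{\Kt}{\Kt\Kt}$), its last coface $d^{n+1}$ (built from the coaction $\function{m}{Y}{\Kt Y}$), and its codegeneracies (built from the counit $\function{\varepsilon}{\Kt}{F}$) all given by post-composition in the target slot, whereas the leftmost coface $d^0$ is the one exceptional map: it is built from the $\K$-coaction $\function{m}{\Susp^r X}{\Kt\Susp^r X}$ on the source together with the functoriality of $\Kt$, sending $g$ to $\Kt(g)\circ m_{\Susp^r X}$.

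For the structure maps given by post-composition in the target, matching is immediate: since the adjunction isomorphism is natural in the target variable $Z$, post-composition by a map $\alpha\colon Z\rarrow Z'$ corresponds to post-composition by $\Loop^r\alpha$, and one checks that $\Loop^r$ carries each inner structure map of $F\Kt^\bullet Y$ to the corresponding structure map of $\mathfrak{C}(Y)=\Loop^r F\Kt^\bullet Y$. The only genuine computation is the compatibility at $d^0$. Here I would run the short diagram chase: using that the source coaction is $m_{\Susp^r X}=\Susp^r(\eta_X)$ and that $\Kt(g)=\Susp^r\Loop^r(g)$ (up to the fibrant-replacement decoration), one finds $d^0(g)=\Susp^r\bigl(\Loop^r(g)\circ\eta_X\bigr)$; applying the adjunction isomorphism and invoking naturality of the unit $\eta$ then identifies this with $\eta_{\Loop^r\Kt^n Y}\circ\bigl(\Loop^r(g)\circ\eta_X\bigr)$, which is exactly post-composition by the $\Loopt^r$-coaction $m$, i.e.\ the right-hand $d^0$.

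The hard part will be carrying out this $d^0$ identification with the fibrant-replacement monad $F$ fully in place, since both the $\K$-coaction on $\Susp^r X$ and the right $\Kt$-coaction on $\Loopt^r$ are defined (in the remarks following Definition~\ref{defn:cobar_construction_fattened}) as composites that interleave the unit $\eta$ of $(\Susp^r,\Loop^r)$ with the unit and multiplication of $F$; one must verify that these $F$-decorations thread through the adjunction isomorphism correctly. This is exactly where simpliciality is used: Proposition~\ref{prop:unit_and_counit_are_simplicial} ensures the unit, counit, multiplication, and comultiplication are simplicial natural transformations, and Proposition~\ref{prop:useful_properties_of_the_adjunction} ensures the adjunction isomorphism is an isomorphism of mapping spaces in $\sSet$, so that the degreewise isomorphisms above are maps of simplicial sets compatible with all structure maps. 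Assembling the verified degreewise and face/degeneracy compatibilities then yields the asserted isomorphism of cosimplicial objects in $(\sSet)^\Delta$, natural in $X$ and $Y$.
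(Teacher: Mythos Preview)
Your argument is correct: the degreewise isomorphism is exactly Proposition~\ref{prop:useful_properties_of_the_adjunction}(b) applied with target $F\Kt^n Y$, and your identification of $d^0$ via naturality of the unit $\eta$ is the right computation. The paper itself states this proposition without proof, treating it as a routine consequence of the simplicial adjunction; your write-up supplies precisely the verification the paper leaves implicit, and there is nothing to compare against beyond that.

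One small point worth tightening: in your $d^0$ chase you write $\Kt(g)=\Susp^r\Loop^r(g)$ ``up to the fibrant-replacement decoration,'' but in fact $\Kt(g)=\Susp^r\Loop^r F(g)$ exactly, and the extra $F$ is absorbed by the monad multiplication $FF\to F$ already built into the comultiplication \eqref{eq:comultiplication_K_tilde} and into the right $\Kt$-coaction on $\Loopt^r$. If you track this through, the $F$-bookkeeping is entirely formal (it only uses naturality of the unit $\id\to F$ and associativity/unitality of $m\colon FF\to F$) and does not interact with the $(\Susp^r,\Loop^r)$ adjunction at all; so the ``hard part'' you flag is more tedious than hard. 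Proposition~\ref{prop:unit_and_counit_are_simplicial} is not actually needed for this particular check---the compatibility is a statement about maps of sets that happens to upgrade automatically to simplicial sets once you know the adjunction isomorphism is simplicial, which is already Proposition~\ref{prop:useful_properties_of_the_adjunction}(b).
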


\begin{prop}
\label{prop:zigzag_of_weak_equivalences_tot_and_tq_completion}
If $X$ is a pointed space, then there is a zigzag of weak equivalences
\begin{align*}
  X^\wedge_{\Loopt^r\Susp^r}\wequiv
  \holim\nolimits_\Delta \mathfrak{C}(\Susp^r X)\wequiv
  \Tot^\res\mathfrak{C}(\Susp^r X)
\end{align*}
in $\Space_*$, natural with respect to all such $X$.
\end{prop}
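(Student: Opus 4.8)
The plan is to establish the two displayed weak equivalences separately: the left-hand one essentially by construction, and the right-hand one by the standard comparison of homotopy-limit models for objectwise fibrant cosimplicial spaces. Naturality in $X$ will come for free, since $\mathfrak{C}$, $\holim_\Delta$, $\Tot^\res$, and every comparison map below is functorial. The one homotopical input I would record at the outset is objectwise fibrancy of $\mathfrak{C}(\Susp^r X)$: by Definition \ref{defn:cobar_construction_fattened} we have $\mathfrak{C}(\Susp^r X)^n=\Loopt^r\Kt^n\Susp^r X=\Loop^r F(\K F)^n\Susp^r X$, and since $F=\Sing|-|$ lands in Kan complexes while $\Loop^r=\hombold_*(S^r,-)$ carries fibrant pointed spaces to fibrant ones (the sphere $S^r$ being cofibrant), each level $\mathfrak{C}(\Susp^r X)^n$ is fibrant. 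This is exactly the hypothesis that makes $\holim_\Delta$ and $\Tot^\res$ homotopically meaningful.

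For the left-hand equivalence I would appeal to the natural isomorphism, already recorded following Definition \ref{defn:cobar_construction_fattened}, between the cosimplicial $\Loopt^r\Susp^r$-resolution \eqref{eq:iterated_loop_suspension_resolution_introduction} of $X$ and $\mathfrak{C}(\Susp^r X)$. Bousfield's $\Loopt^r\Susp^r$-completion $X^\wedge_{\Loopt^r\Susp^r}$ is, by definition \cite{Bousfield_cosimplicial_space}, the totalization of that resolution; transporting along the natural isomorphism of cosimplicial spaces and using objectwise fibrancy to identify totalization with $\holim_\Delta$ yields the natural weak equivalence $X^\wedge_{\Loopt^r\Susp^r}\wequiv\holim_\Delta\mathfrak{C}(\Susp^r X)$. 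This is precisely the tautological identification of the target of the completion map used in Sections \ref{sec:outline_of_the_argument} and \ref{sec:derived_fundamental_adjunction}.

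For the right-hand equivalence I would invoke the fact that, for an objectwise fibrant cosimplicial pointed space, the restricted totalization $\Tot^\res$ computes the homotopy limit $\holim_\Delta$. Concretely, I would run the comparison up the $\Tot$-tower: Proposition \ref{prop:punctured_cube_calculation_of_holim_truncated_delta}, which rests on the left cofinality of Proposition \ref{prop:left_cofinality_truncated_delta}, identifies $\holim_{\Delta^{\leq n}}\mathfrak{C}(\Susp^r X)$ with its restricted truncation at each level $n$, compatibly with the maps down the two towers; passing to the homotopy limit over $n$ then gives $\holim_\Delta\mathfrak{C}(\Susp^r X)\wequiv\Tot^\res\mathfrak{C}(\Susp^r X)$. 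Equivalently, one may cite directly the model comparisons for homotopy limits recorded in Bousfield-Kan \cite{Bousfield_Kan} and Ching-Harper \cite{Ching_Harper_derived_Koszul_duality} (see also \cite{Blomquist_Harper_integral_chains}).

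I do not expect a genuine obstacle here: once objectwise fibrancy is in hand, the argument is bookkeeping with the standard model-independence of homotopy limits, and no homotopical input beyond that is needed. The only points requiring care are matching Bousfield's conventions for the completion (and for totalization) with the $\holim_\Delta$ and $\Tot^\res$ models used in this paper, and checking that the level-wise cofinality comparison is compatible with the tower maps, so that the passage to the limit over truncation level is clean.
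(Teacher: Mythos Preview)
The paper states this proposition without proof, treating it as a standard background fact in Section~\ref{sec:derived_fundamental_adjunction}. Your argument is correct and supplies precisely the expected justification: objectwise fibrancy of $\mathfrak{C}(\Susp^r X)$ from the presence of $F$ in each cosimplicial level, the identification of the $\Loopt^r\Susp^r$-resolution with $\mathfrak{C}(\Susp^r X)$ already recorded after Definition~\ref{defn:cobar_construction_fattened}, and the standard comparison $\holim_\Delta\wequiv\Tot^\res$ for objectwise fibrant cosimplicial spaces via the cofinality of Propositions~\ref{prop:left_cofinality_truncated_delta} and~\ref{prop:punctured_cube_calculation_of_holim_truncated_delta}.
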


\begin{defn}
A pointed space $X$ is \emph{$\Loopt^r\Susp^r$-complete} if the natural coaugmentation $X\wequiv X^\wedge_{\Loopt^r\Susp^r}$ is a weak equivalence.
\end{defn}

\begin{prop}
\label{prop:fundamental_adjunction_derived_version}
There are natural zigzags of weak equivalences of the form
\begin{align*}
  \Map_\coAlgKt(\Susp^r X,Y)\wequiv
  \Map_{\Space_*}(X,\holim\nolimits_\Delta \mathfrak{C}(Y))
\end{align*}
in $\CGHaus$; applying $\pi_0$ gives the natural isomorphism $[\Susp^r X,Y]_\Kt\Iso[X,\holim_\Delta \mathfrak{C}(Y)]$.
\end{prop}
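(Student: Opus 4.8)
The plan is to carry out the entire comparison at the level of the Dwyer--Kan homotopy function complexes $\Hombold$ in $\sSet$, and only realize into $\CGHaus$ at the very end. By Remark~\ref{rem:intuition_behind_resolution} the restricted totalization defining the derived $\Kt$-coalgebra mapping space is naturally weakly equivalent to a homotopy limit,
\begin{align*}
  \Hombold_\coAlgKt(\Susp^r X,Y)\wequiv
  \holim\nolimits_\Delta\Hombold\bigl(\Susp^r X,F\Kt^\bullet Y\bigr),
\end{align*}
so it suffices to produce a natural zigzag of weak equivalences between the right-hand side and $\Hombold\bigl(X,\holim_\Delta\mathfrak{C}(Y)\bigr)$ in $\sSet$.

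First I would rewrite the cosimplicial object appearing inside the $\holim_\Delta$. By Proposition~\ref{prop:cosimplicial_resolutions_of_K_coalgebras_respect_adjunction_isos}, the $(\Susp^r,\Loop^r)$ adjunction isomorphisms furnish a natural isomorphism of cosimplicial objects
\begin{align*}
  \Hombold\bigl(\Susp^r X,F\Kt^\bullet Y\bigr)\Iso
  \Hombold\bigl(X,\Loopt^r\Kt^\bullet Y\bigr)
\end{align*}
in $\sSet$, and by Definition~\ref{defn:cobar_construction_fattened} the right-hand cosimplicial object is \emph{exactly} $\Hombold(X,\mathfrak{C}(Y))$, since $\mathfrak{C}(Y)^n=\Loopt^r\Kt^n Y$ with matching coface and codegeneracy maps. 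Thus the problem reduces to the interchange
\begin{align*}
  \holim\nolimits_\Delta\Hombold\bigl(X,\mathfrak{C}(Y)\bigr)\wequiv
  \Hombold\bigl(X,\holim\nolimits_\Delta\mathfrak{C}(Y)\bigr).
\end{align*}
The key point is that $\Hombold(X,-)\colon\thinspace\Space_*\to\sSet$ is right adjoint to $X\tensordot-$ (Remark~\ref{rem:useful_adjunction_isomorphisms_simplicial_structure}) and, by the simplicial model structure (SM7), is moreover a right Quillen functor, hence preserves fibrations, fibrant objects, and homotopy limits of objectwise fibrant diagrams. Since each $\mathfrak{C}(Y)^n=\Loop^r F(\K F)^n Y$ is fibrant, $\mathfrak{C}(Y)$ is objectwise fibrant, and the displayed comparison follows.

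Finally I would compose these identifications to obtain $\Hombold_\coAlgKt(\Susp^r X,Y)\wequiv\Hombold_{\Space_*}(X,\holim_\Delta\mathfrak{C}(Y))$ in $\sSet$, and then apply the realization functor $|-|$---which preserves weak equivalences and is compatible with the restricted totalizations under the homotopy-limit conventions recalled above---to land the equivalence
\begin{align*}
  \Map_\coAlgKt(\Susp^r X,Y)\wequiv
  \Map_{\Space_*}(X,\holim\nolimits_\Delta\mathfrak{C}(Y))
\end{align*}
in $\CGHaus$; applying $\pi_0$ and unwinding the definitions of $[-,-]_\Kt$ and $[-,-]$ then yields the stated natural isomorphism. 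I expect the one genuinely technical step to be the interchange of $\Hombold(X,-)$ with $\holim_\Delta$: one must verify that the right adjoint $\Hombold(X,-)$ carries the fibrant tower computing $\holim_\Delta\mathfrak{C}(Y)$ to one computing $\holim_\Delta\Hombold(X,\mathfrak{C}(Y))$, so that its strict commutation with limits descends to a weak equivalence on homotopy limits. The remaining identifications are formal consequences of the adjunction isomorphisms and the definition of the cosimplicial cobar construction $\mathfrak{C}$.
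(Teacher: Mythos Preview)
Your argument is correct and follows essentially the same route as the paper: pass through the $(\Susp^r,\Loop^r)$ adjunction isomorphism of Proposition~\ref{prop:cosimplicial_resolutions_of_K_coalgebras_respect_adjunction_isos}, interchange $\Hombold(X,-)$ with the homotopy limit over $\Delta$, and then realize. The one place the paper streamlines your argument is in the interchange step: rather than arguing that $\Hombold(X,-)$ preserves homotopy limits of objectwise fibrant diagrams, the paper works with the specific model $\Tot^\res$ throughout, so that $\Hombold_{\Space_*}\bigl(X,\Tot^\res\mathfrak{C}(Y)\bigr)\Iso\Tot^\res\Hombold_{\Space_*}\bigl(X,\mathfrak{C}(Y)\bigr)$ is a \emph{strict} isomorphism (since $\Hombold(X,-)$ is a right adjoint and $\Tot^\res$ is a limit), and the last line is then the \emph{definition} of $\Hombold_\coAlgKt(\Susp^r X,Y)$---this removes the need for the homotopical interchange you flagged as the one technical step.
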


\begin{proof}
There are natural zigzags of weak equivalences of the form
\begin{align*}
  \Hombold_{\Space_*}(X,\holim\nolimits_\Delta \mathfrak{C}(Y))
  &\wequiv\Hombold_{\Space_*}\bigl(X,\Tot^\res \mathfrak{C}(Y)\bigr)\\
  &\Iso\Tot^\res\Hombold_{\Space_*}\bigl(X,\Loopt^r\Kt^\bullet Y\bigr)\\
  &\Iso\Tot^\res\Hombold_{\Space_*}\bigl(\Susp^r X,F\Kt^\bullet Y\bigr)\\
  &\Iso\Hombold_\coAlgKt(\Susp^r X,Y)
\end{align*}
in $\sSet$; applying realization finishes the proof.
\end{proof}

The following shows that the suspension spaces functor in the fundamental adjunction is homotopically fully faithful on $\Loopt^r\Susp^r$-complete spaces; compare Hess \cite[5.5]{Hess} and Arone-Ching \cite[2.15]{Arone_Ching_classification}.

\begin{prop}
\label{prop:formal_adjunction_and_iso_argument}
Let $X,X'$ be pointed spaces. If $X'$ is $\Loopt^r\Susp^r$-complete and fibrant, then there is a natural zigzag
\begin{align*}
  \Susp^r\colon\thinspace\Map_{\Space_*}(X,X')\xrightarrow{\wequiv}
  \Map_\coAlgKt(\Susp^r X,\Susp^r X')
\end{align*}
of weak equivalences; applying $\pi_0$ gives the map $[f]\mapsto[\Susp^r f]$.
\end{prop}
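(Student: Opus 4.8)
The plan is to specialize the derived fundamental adjunction (Proposition \ref{prop:fundamental_adjunction_derived_version}) to the object $Y=\Susp^r X'$ and then to use $\Loopt^r\Susp^r$-completeness of $X'$ to identify the right-hand side with $\Map_{\Space_*}(X,X')$. Concretely, taking $Y=\Susp^r X'$ in Proposition \ref{prop:fundamental_adjunction_derived_version} produces a natural zigzag of weak equivalences
\[
  \Map_\coAlgKt(\Susp^r X,\Susp^r X')\wequiv
  \Map_{\Space_*}\bigl(X,\holim\nolimits_\Delta \mathfrak{C}(\Susp^r X')\bigr)
\]
in $\CGHaus$. By Proposition \ref{prop:zigzag_of_weak_equivalences_tot_and_tq_completion} there is a natural zigzag of weak equivalences $\holim_\Delta \mathfrak{C}(\Susp^r X')\wequiv (X')^\wedge_{\Loopt^r\Susp^r}$, and since $X'$ is $\Loopt^r\Susp^r$-complete by assumption, the coaugmentation $X'\wequiv (X')^\wedge_{\Loopt^r\Susp^r}$ is a weak equivalence; splicing these together gives $\holim_\Delta \mathfrak{C}(\Susp^r X')\wequiv X'$.

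Next I would observe that all objects in sight are fibrant: $X'$ is fibrant by hypothesis, and $\holim_\Delta \mathfrak{C}(\Susp^r X')$ is a homotopy limit of objectwise fibrant pointed spaces (the cobar terms $\Loopt^r\Kt^n\Susp^r X'=\Loop^r F(\K F)^n\Susp^r X'$ are fibrant since $F$ is fibrant replacement and $\Loop^r$ is right Quillen), hence fibrant. Since every pointed simplicial set is cofibrant, the functor $\Map_{\Space_*}(X,-)$ sends weak equivalences between fibrant objects to weak equivalences, so the weak equivalence $\holim_\Delta \mathfrak{C}(\Susp^r X')\wequiv X'$ induces a weak equivalence
\[
  \Map_{\Space_*}\bigl(X,\holim\nolimits_\Delta \mathfrak{C}(\Susp^r X')\bigr)
  \wequiv
  \Map_{\Space_*}(X,X').
\]
Concatenating the two zigzags yields the desired natural zigzag of weak equivalences between $\Map_\coAlgKt(\Susp^r X,\Susp^r X')$ and $\Map_{\Space_*}(X,X')$.

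The one point requiring genuine care, and the main obstacle, is the last clause: that under $\pi_0$ the resulting map is $[f]\mapsto[\Susp^r f]$, equivalently that the composite zigzag is compatible with the $\Susp^r$-induced map of Proposition \ref{prop:induced_map_on_mapping_spaces_built_from_Q}. This is not automatic from the abstract weak equivalences above and must be verified by unwinding the adjunction isomorphisms used to build the zigzag in Proposition \ref{prop:fundamental_adjunction_derived_version}. The cleanest route is to trace the image of a class $[f]$ with $f\colon X\to X'$ through each arrow: under the identification $\holim_\Delta \mathfrak{C}(\Susp^r X')\simeq X'$ the completion coaugmentation plays the role of the derived unit, and the $(\Susp^r,\Loop^r)$-adjunction isomorphism of Proposition \ref{prop:cosimplicial_resolutions_of_K_coalgebras_respect_adjunction_isos} carries the adjoint transpose of $f$ to the map represented by $\Susp^r f$. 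Naturality of every zigzag in the variable $X'$, together with the triangle identities relating the derived unit and derived counit, then forces the $\pi_0$-level map to agree with $[f]\mapsto[\Susp^r f]$, which finishes the proof.
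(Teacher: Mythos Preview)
Your proof is correct and follows essentially the same route as the paper: specialize Proposition \ref{prop:fundamental_adjunction_derived_version} to $Y=\Susp^r X'$, then use Proposition \ref{prop:zigzag_of_weak_equivalences_tot_and_tq_completion} and $\Loopt^r\Susp^r$-completeness of $X'$ to identify $\holim_\Delta\mathfrak{C}(\Susp^r X')$ with $X'$, and apply $\Map_{\Space_*}(X,-)$. The paper records exactly this chain of zigzags in one line and does not spell out the fibrancy check or the $\pi_0$-compatibility; your added justification on those points is sound and more careful than what the paper itself provides.
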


\begin{proof}
This follows from the natural zigzags
\begin{align*}
  \Map_{\Space_*}(X,X')&\wequiv
  \Map_{\Space_*}(X,{X'}^\wedge_{\Loopt^r\Susp^r})\\
  &\wequiv\Map_{\Space_*}\bigl(X,\holim\nolimits_\Delta \mathfrak{C}(\Susp^r X')\bigr)
  \wequiv\Map_\coAlgKt(\Susp^r X,\Susp^r X')
\end{align*}
of weak equivalences.
\end{proof}

\bibliographystyle{plain}
\bibliography{IteratedSuspension}

\end{document}